\theoremstyle{plain}
\newtheorem{theorem}{Theorem}[section]
\newtheorem{corollary}[theorem]{Corollary}
\newtheorem{lemma}[theorem]{Lemma}
\newtheorem{proposition}[theorem]{Proposition}
\newtheorem{fact}[theorem]{Fact}
\newtheorem*{claim}{Claim}
\newtheorem{nclaim}{Claim}
\newtheorem*{theorem*}{Theorem}
\newtheorem{introtheorem}{Theorem}
\theoremstyle{definition}
\newtheorem{definition}[theorem]{Definition}
\newtheorem{example}[theorem]{Example}
\theoremstyle{remark}
\newtheorem*{remark}{Remark}
\newtheorem*{notation}{Notation}
\numberwithin{equation}{section}
\newcommand{\forkindep}[1][]{%
  \mathrel{
    \mathop{
      \vcenter{
        \hbox{\oalign{\noalign{\kern-.3ex}\hfil$\vert$\hfil\cr
              \noalign{\kern-.7ex}
              $\smile$\cr\noalign{\kern-.3ex}}}
      }
    }\displaylimits_{#1}
  }
}
\newenvironment{claimproof}[1][\proofname]
  {%
    \proof[#1]%
  }
  {%
    \endproof%
  }
\newcommand\s[1][]{\if#1=1 \mathfrak{s}_p \else \mathfrak{s}_{p^{#1}}\fi}
\newcommand{\Zz}{{\mathbb Z}}
\newcommand{\CL}{{\mathcal L}}
\newcommand{\CK}{{\mathcal K}}
\newcommand{\CM}{{\mathcal M}}
\newcommand{\CS}{\mathcal S}
\newcommand{\0}{\emptyset}
\renewcommand{\phi}{\varphi}
\def\nsim{\raise.17ex\hbox{$\scriptstyle\sim$}}
\def\dprk{\textrm{dp-rk}}
\title[Strongly dependent OAGs and fields]{Strongly dependent ordered abelian groups and henselian fields}
\date{\today}
	\author[Y. Halevi]{Yatir Halevi$^*$}
	\thanks{$^*$The research leading to these results has received funding from the European Research Council under the European Union’s Seventh Framework Programme (FP7/2007-2013)/ERC Grant Agreement No. 291111.}
	\address{$^*$Einstein Institute of Mathematics\\
    The Hebrew University of Jerusalem\\
    Givat Ram\\
    Jerusalem 91904\\
    Israel\\}
	\email{yatir.halevi@mail.huji.ac.il}
	\urladdr{http://ma.huji.ac.il/\textasciitilde yatirh/}
  \author[A. Hasson]{Assaf Hasson$^\dagger$}
  \thanks{$^\dagger$ Supported by ISF grant No. 181/16}
  \address{$^\dagger$Department of mathematics\\
   Ben Gurion University of the Negev\\
   Be'er Sehva\\
   Israel} \email{hassonas@math.bgu.ac.il} \urladdr{http://www.math.bgu.ac.il/\textasciitilde hasson/}
\begin{document}

\begin{abstract}
	Strongly dependent ordered abelian groups have finite dp-rank. They are precisely those groups with finite spines and $|\{p\text{ prime}:[G:pG]=\infty\}|<\infty$. We apply this to show that if $K$ is a strongly dependent field, then $(K,v)$ is strongly dependent for any henselian valuation $v$. 
	%and the value group and residue field are stably embedded as pure structures.  
\end{abstract}

\maketitle

\section{introduction}
Ordered abelian groups were classified up to elementary equivalence (and beyond) by Gurevich \cite{gurevich} and Schmitt \cite{schmitt-hab} (and references therein). One significant application was the proof in \cite{GuSh} that ordered abelian groups are dependent (i.e., do not have the independence property). This result, when combined with transfer principles (such as \cite{DelHenselian} and \cite{belair}, and most recently \cite{JaSiTransfer}), reduced -- under fairly general conditions -- the task of checking whether a (pure) henselian valued field is dependent to checking whether its residue field is. 

The finer classification of henselian dependent fields, motivated mainly by Shelah's conjecture (\cite{strongdep}) that all infinite (strongly) dependent fields are separably closed, real closed or admit a definable henselian valuation, called for a finer classification of ordered abelian groups. The immediate motivation for the investigation carried out in the present paper was the lack of worked out examples of strongly dependent ordered abelian groups (and henselian fields) that are not dp-minimal. We prove, generalising the classification of dp-minimal ordered abelian groups of \cite{dpmin-val-fields}: 
\begin{introtheorem}\label{Intro:main}
	Let $G$ be an ordered abelian group. The following are equivalent 
	\begin{enumerate}
		\item $G$ is strongly dependent;
		\item $\dprk(G)<\aleph_0$;
		\item $G$ has finite spines and $|\{p\text{ prime}:[G:pG]=\infty\}|<\infty$;
		\item $G$ is elementary equivalent to a lexicographic sum of non-zero archimedean groups $\bigoplus_{i\in I}G_i$, where  
		\begin{enumerate}
		\item for every prime $p$, $|\{i\in I: pG_i\neq G_i\}|<\infty$ and
		\item $[G_i:pG_i]=\infty$ (for some $i\in I$) for only finitely many primes $p$.
		\end{enumerate}
	\end{enumerate}
\end{introtheorem}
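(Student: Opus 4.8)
The plan is to run the cycle of implications $(2)\Rightarrow(1)\Rightarrow(3)\Rightarrow(4)\Rightarrow(2)$. The step $(2)\Rightarrow(1)$ is free, since a theory of finite dp-rank is automatically strongly dependent. The equivalence $(3)\Leftrightarrow(4)$ is, in essence, a repackaging of the Gurevich--Schmitt structure theory: writing $\theo(G)$ via a lexicographic decomposition along the chain of archimedean classes, finiteness of the spines translates into clause $(4)(a)$, and $|\{p:[G:pG]=\infty\}|<\infty$ into clause $(4)(b)$. So the content is concentrated in $(1)\Rightarrow(3)$ and $(4)\Rightarrow(2)$.

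For $(1)\Rightarrow(3)$ I argue by contraposition, using that $T$ fails to be strongly dependent as soon as there are mutually indiscernible sequences $(I_k)_{k<\omega}$ and a tuple $b$ with no $I_k$ indiscernible over $b$. Suppose first that $[G:pG]=\infty$ for all $p$ in an infinite set $P$ of primes. In a monster model, for each $p\in P$ choose a sequence $a^p_0,a^p_1,\dots$ lying in pairwise distinct cosets of $pG$ (possible since $G/pG$ is infinite), arranged so that this holds universally along each row; by Ramsey and compactness pass to a mutually indiscernible array $(a^p_k)_{p\in P,\,k<\omega}$, retaining that property. Since the relations $p\mid x-y$ are $\emptyset$-definable and the $p\in P$ are pairwise coprime, the Chinese Remainder Theorem (a first-order property for each finite set of primes, hence true in $G$) shows that the partial type over the array asserting $p\mid x-a^p_0$ and $\neg(p\mid x-a^p_k)$ for all $p\in P$ and all $k\ge1$ is finitely satisfiable; any realisation $b$ then witnesses that no row is indiscernible over $b$, contradicting $(1)$. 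The case of an infinite $p$-spine is similar in spirit but needs more input: an infinite spine yields, via the definable connection between $G$ and the spine sorts and Schmitt's description of the structure induced on them, an infinite family of $\emptyset$-definable relations on $G$ that refine $p$-divisibility at infinitely many ``levels'' independently, and these can again be assembled into the required configuration.

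For $(4)\Rightarrow(2)$, which is the technical heart, one works with quantifier elimination in the Cluckers--Halupczok language for ordered abelian groups, so that the type over a model $M$ of an element of the group is controlled by its cut in $M$, its residues modulo $nG$ for the relevant $n$, and finitely much data in the spine sorts. Since every linear order has dp-rank one, the ``cut'' component contributes boundedly; finiteness of the spines bounds the contribution of the auxiliary sorts; and clause $(4)(a)$ ensures that in any mutually indiscernible array, at any fixed prime $p$, all but finitely many rows carry no $p$-divisibility information, so that the divisibility component only ``sees'' the finitely many primes permitted by $(4)(b)$. Combining these bounds via subadditivity of dp-rank along the convex subgroups of $G$ yields an explicit finite upper bound on $\dprk(G)$.

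The main obstacle is $(4)\Rightarrow(2)$. Lexicographic sums over infinite index sets are in general far from strongly dependent --- for instance $\bigoplus_{\omega}\Zz$ (which violates $(4)(a)$, every summand being non-divisible at every prime) has infinite dp-rank --- so the bound must genuinely use both clauses of $(4)$, and the induction along the (possibly infinite) chain of convex subgroups has to be organised so that the per-prime bound from $(4)(a)$ does not accumulate as one descends the chain. A secondary difficulty is the infinite-spine case of $(1)\Rightarrow(3)$, where one must unwind enough of Schmitt's analysis of the induced structure on the spine to exhibit the configuration explicitly.
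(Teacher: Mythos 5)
Your cycle $(2)\Rightarrow(1)\Rightarrow(3)\Rightarrow(4)\Rightarrow(2)$ has the right skeleton, and the two steps you identify as carrying the weight, $(1)\Rightarrow(3)$ and $(4)\Rightarrow(2)$, are indeed where the paper's work is concentrated. The infinitely-many-primes half of $(1)\Rightarrow(3)$ is essentially correct. But the other two hard places are left as acknowledged gaps rather than proofs. For the infinite-spine case of $(1)\Rightarrow(3)$ you gesture at ``relations that refine $p$-divisibility at infinitely many levels'' without producing them. The paper's argument (Proposition~\ref{P:strgdep_implies_A_finite}) is concrete: by Lemma~\ref{L:p-div-element-in-Sp}, consecutive elements of $\mathcal{S}_p$ give non-$p$-divisible quotients; by saturation one extracts a chain $G_{\beta_0}\subsetneq G_{\beta_1}\subsetneq\cdots$ with $[G_{\beta_{i+1}}/G_{\beta_i}:p(G_{\beta_{i+1}}/G_{\beta_i})]=\infty$; and then the $\emptyset$-definable subgroups $G_{\beta_i}+p^{i+1}G$ satisfy the infinite-index condition of Fact~\ref{F:dp-1bsd}, hence witness an inp-pattern of depth $\omega$. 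Nothing in your sketch plays the role of that last step.

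The more serious gap is in $(4)\Rightarrow(2)$. You propose to bound $\dprk(G)$ by ``subadditivity of dp-rank along the convex subgroups of $G$,'' but $G$ can have infinitely many definable convex subgroups even when it is dp-minimal (see the example $\bigoplus_p\mathbb{Z}_{(p)}$), so a subgroup-by-subgroup accumulation has no reason to terminate; and the QE you invoke does not reduce a type to ``a cut plus residues mod $nG$ plus finitely much spine data'' --- the congruence predicates $x\equiv_{m,H_i}y+k_{G/H_i}$ run over all definable convex $H_i$, potentially infinitely many. You flag this as ``the main obstacle,'' which is honest, but it is not resolved. The paper avoids the accumulation entirely by a different mechanism: it first passes to the reduct to the group language, observes that this is a $1$-based abelian structure, and classifies its definable subgroups as boolean combinations of cosets of $H_j$ and $H_i+p^nG$ with $H_i\in\mathcal{S}_p$ (Proposition~\ref{P:definable-groups}). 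In a $1$-based group the burden is read off from families of definable subgroups with pairwise infinite sub-indices (Fact~\ref{F:dp-1bsd}), and Proposition~\ref{P:conc-on inp-patterns} shows that for fixed $p$ such a family has size at most $k_p+1$ --- this is where finiteness of the spine caps the contribution at each prime, with $(4)(b)$ bounding the number of primes. That yields $\dprk_{\text{reduct}}(G)\le c_G+\sum_p k_p$. Re-introducing the order then adds at most one to the rank, via a separate and not-quite-obvious argument (Lemma~\ref{L:reduct-to-order} and Proposition~\ref{P:reduct-leq-p1}) that among mutually indiscernible sequences, those indiscernible over $c$ in the reduct language can fail to be indiscernible over $c$ in the full language at most once. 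None of this $1$-based machinery, nor a substitute for it, appears in your outline, so the proposal as written does not close $(4)\Rightarrow(2)$.
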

The spines of an ordered abelian group, in the terminology of \cite{schmitt}, are (interpretable) coloured linear orders determining the first order theory of the group. To the best of our knowledge, no systematic study of ordered abelian groups with finite spines has been carried out before. In Section \ref{S:pn}, we collect a few useful facts about ordered abelian groups. In Section \ref{S:fs} we apply Schmitt's characterization of lexicographic sums of ordered archimedian groups to characterize groups with finite spines. 

Theorem \ref{Intro:main} is proved in Section \ref{S:cal-dprk}. The proof proceeds by showing that strongly dependent ordered abelian groups have finite spines and explicitly calculating the dp-rank of the latter. This is done by first calculating the dp-rank of a certain 1-based reduct of the group, and then studying the effect of re-introducing the order into that structure. 

We have recently learned that Rafel Farr\'e \cite{farre}, Alfred Dolich and John Goodrick \cite{DoGo} have obtained, independently and using different methods, some of the results concerning ordered abelian groups obtained in this paper.

In Section \ref{fields} we apply our classification of strongly dependent ordered abelian groups to the study of strongly dependent henselian fields. Our main result is:  
\begin{introtheorem}\label{T:mainfields}
	Let $K$ be strongly dependent field and $v$ any henselian valuation on $K$. Then $(K,v)$ is strongly dependent. The value group, $vK$, is stably embedded in $(K,v)$ as a pure ordered abelian group (up to one constant), and the residue field, $Kv$, is stably embedded as a pure field. 
\end{introtheorem}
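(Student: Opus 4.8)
The plan is to reduce the strong dependence of $(K,v)$ to that of the residue field $Kv$ and of the value group $vK$, via an Ax--Kochen--Ershov style transfer principle --- the strong-dependence refinement of \cite{DelHenselian,belair,JaSiTransfer} --- and then to obtain strong dependence of $vK$ from Theorem~\ref{Intro:main}. Some reductions come first: we may assume $v$ is nontrivial and work in a saturated model. A strongly dependent field is NIP, and in positive characteristic Artin--Schreier closed, which keeps $(K,v)$ within the range of applicability of the transfer principles (no pathological defect or wild ramification). If $K$ is separably closed it has finite imperfection degree, so $(K,v)$ is a separably (or algebraically) closed valued field, which is known to be strongly dependent; we may therefore assume $K$ is not separably closed. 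Then $v$ is comparable to the canonical henselian valuation $v_K$ of $K$, which --- since $K$ carries the nontrivial henselian valuation $v$ --- is definable in $K$.

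The heart of the matter is to show that $vK$ is a strongly dependent ordered abelian group; by Theorem~\ref{Intro:main}(3) this amounts to two assertions: that $[vK:p\,vK]=\infty$ for only finitely many primes $p$, and that $vK$ has finite spines. The first is relatively soft: $v$ induces a surjection $K^\times/(K^\times)^p\twoheadrightarrow vK/p\,vK$, so it is enough that a strongly dependent field have $[K^\times:(K^\times)^p]$ finite for all but finitely many $p$. The second --- finiteness of the spines --- is the technical core: each definable convex subgroup of $vK$ determines a henselian coarsening of $v$, so an infinite spine would yield an infinite configuration of henselian coarsenings of $v$ together with the attached multiplicative data inside $K$; strong dependence of $K$ (via the absence of an infinite inp-pattern) bounds the length of any such configuration, and together with the divisibility estimate this forces all spines of $vK$ to be finite. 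Along the way one uses that convex subgroups and convex quotients of strongly dependent ordered abelian groups are again strongly dependent, which follows from the characterization in Theorem~\ref{Intro:main}.

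It remains to treat $Kv$ and assemble. Comparing $v$ with $v_K$: if $v$ refines $v_K$ then $Kv$ is separably closed, hence (of finite imperfection degree) strongly dependent; otherwise $v_K$ equals or refines $v$, and then $v_K$ induces on $Kv$ a henselian valuation whose residue field is $Kv_K$ --- interpretable in $K$, as $v_K$ is definable in $K$, hence strongly dependent --- and whose value group is a convex subgroup of the strongly dependent group $v_KK$, so that $Kv$ is strongly dependent by the transfer principle. With both $vK$ and $Kv$ strongly dependent, the strong-dependence transfer principle gives that $(K,v)$ is strongly dependent. Finally, the relative quantifier elimination underlying that transfer principle shows that the structure induced on $vK$ by $(K,v)$ is that of a pure ordered abelian group with at most one extra constant, and that the structure induced on $Kv$ is that of a pure field; this yields the asserted stable embeddedness.

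The step I expect to be the main obstacle is the finiteness of the spines of $vK$. This is precisely where Theorem~\ref{Intro:main} is indispensable --- it is what permits one to certify strong dependence of $vK$ by a condition visible already in $K$ rather than in $(K,v)$, thereby breaking the apparent circularity in the transfer argument --- and it is the point at which the combinatorics of strong dependence must be confronted with the valuation-theoretic structure of $K$ (the correspondence between convex subgroups and coarsenings, and the inp-pattern bound extracted from $K$). A secondary, more routine difficulty is checking the defect/ramification hypotheses of the transfer principle and carrying the comparison with $v_K$ through the various residue-characteristic cases.
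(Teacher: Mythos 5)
The central step in your sketch --- bounding the spines of $vK$ by extracting an inp-pattern in the pure field $K$ from an infinite ascending chain of convex subgroups of $vK$ together with ``attached multiplicative data'' --- is exactly where the circularity you flag at the end bites, and the sketch does not resolve it. The convex subgroups of $vK$, and the henselian coarsenings they index, are definable in $(K,v)$ (or in $vK$), not a priori in the pure field $K$; without an independent source of definability of $v$ (or of suitable coarsenings) in $K$, strong dependence of $K$ alone places no bound on the length of such a chain. The paper in fact never proves finiteness of the spines of $vK$ head-on. Proposition~\ref{P:value-grp-strong} argues by induction on $|\mathbb{P}_{\infty}(vK)|$ (which is finite by the same $K^\times/(K^\times)^p$ observation you make): when $Kv$ is not separably closed, $v$ is $K^{sh}$-definable by \cite[Theorem A]{JahNIP} and one is done at once; when $Kv$ is algebraically closed, Proposition~\ref{P:coarsening} produces a nontrivial $\emptyset$-definable henselian coarsening $u$ of $v$ \emph{in the pure field $K$} --- via the canonical $q$-henselian valuation on $K(\zeta_q)$ for a prime $q$ with $vK$ not $q$-divisible, or via non-$p$-antiregularity of $vK$ (Proposition~\ref{P:antireg}) --- arranged so that $\mathbb{P}_{\infty}(\bar v(Ku))\subsetneq\mathbb{P}_{\infty}(vK)$. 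The inductive hypothesis then applies to $(Ku,\bar v)$, and Corollary~\ref{C:induction} assembles $vK$ from $\bar v(Ku)$ and $uK$; finiteness of the spines falls out of Theorem~\ref{T:main}(3) only after the fact.

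A secondary gap: you repeatedly invoke that the canonical henselian valuation $v_K$ is definable in $K$ whenever $K$ carries a nontrivial henselian valuation, but $v_K$ need not be definable. What is available under the relevant hypotheses is the canonical $q$-henselian valuation $v_K^q$ (definable after adjoining a $q$-th root of unity), together with the $K^{sh}$-definability of $v$ when $Kv$ is not separably closed; this is what Proposition~\ref{P:coarsening} actually uses. For the residue field, the comparison with $v_K$ is unnecessary: Proposition~\ref{P:res-field-strong} handles all cases by $K^{sh}$-definability when $Kv$ is not separably closed, and by perfection (Fact~\ref{F:perfect} plus Scanlon's argument) forcing $Kv$ algebraically closed otherwise. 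Finally, the transfer from $(Kv, vK)$ to $(K,v)$ is not a single uniform principle but a genuine three-way case split on residue characteristic in the proof of Theorem~\ref{T:fields}: equicharacteristic zero via \cite[Claim 1.17]{strongdep}; equicharacteristic $p$ via the algebraically maximal Kaplansky transfer (Lemma~\ref{L:Kaplansky}, Proposition~\ref{P:transf-strong-dep}); and mixed characteristic by a two-stage decomposition through $\Delta_0\subseteq\Delta$ following Johnson, reassembled using the stable-embeddedness transfer Proposition~\ref{P:strong2.5}. Your ``secondary, more routine'' remark understates the work needed here, especially in the unboundedly ramified mixed-characteristic case.
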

As a corollary we deduce (using results of Johnson, \cite{johnson}) that strongly dependent fields are defectless (and therefore also algebraically maximal) with respect to any henselian valuation. Our study of strongly dependent valued fields builds on ideas of Jahnke and Simon (\cite{JahNIP}, \cite{JaSiTransfer}). \\

\noindent\emph{Acknowledgements} We would like to thank Franziska Jahnke for a long discussion of an earlier draft of this paper. Her comments and ideas contributed to considerably improve the paper, especially, Section \ref{fields}. We would also like to thank Nick Ramsey, Itay Kaplan and Antongiulio Fornasiero for pointing out some mistakes in an early draft. David Pierce noted in his MathSciNet review that there were some small mathematical typos in Theorem 1, we thank him for that. Finally, We thank the anonymous referee for a meticulous reading of the paper, and his or her detailed comments and suggestions.

\section{Preliminaries and notation}\label{S:pn}
Throughout the text $G$ will denote a group, usually abelian and often ordered, $\mathfrak{C}$ will denote a sufficiently saturated model of $\mathrm{Th}(G)$. By definable we will mean definable with parameters. We will need a few results from \cite{schmitt-hab}. Since this text is not readily available, we try to keep the present work as self contained as possible, referring to more accessible sources whenever we are aware of such. In particular, for the study of ordered abelian groups we chose the language of \cite{ClHa}, rather than the language used by Schmitt. The next sub-section is dedicated to a quick overview of (parts) of the language we are using, and to the basic properties of definable sets.

\subsection{Ordered abelian groups}\label{ss:oag}
Recall that an abelian group $(G;+)$ is orderd if it is equipped with a  linear ordering $<$ such that $a<b$ implies  $a+g<b+g$ for all $a,b,g\in G$. An ordered abelian group is \emph{discrete} if it has a minimal positive element, and \emph{dense} otherwise. It is \emph{archimedean} if for all $a,b\in G$ there exists $n\in \Zz$ such that $na>b$. In particular, archimedean ordered abelian groups do not have non-trivial convex subgroups. 

%\begin{definition}
%An ordered abelian group $G$ is called \emph{archimedean} if for all $a,b\in G$ there exists $n\in\mathbb{Z}$ such that $na>b$.
%
%$G$ is called \emph{discrete} if there exists a minimal positive element and \emph{dense}, otherwise.
%\end{definition}

Schmitt and Gurevich \cite{gurevich, schmitt-hab} were the first to provide quantifier elimination for ordered abelian groups. For most of our needs in the present paper a slightly different language introduced by Cluckers and Halupczok in \cite{ClHa} will be more convenient. We remind some of the notation and conventions from \cite{ClHa}: 

For any $n\in \mathbb{N}$ and $a\in G\setminus nG$ let $H_n(a)$ be the largest convex subgroup of $G$ such that $a\notin H_n(a) +nG$ (equivalently, it is the largest convex subgroup not meeting $a+nG$), and $H_n(a)=0$ if $a\in nG$. By \cite[Lemma 2.1]{ClHa} the groups $H_n(a)$ are definable (uniformly in $a$) in the language of ordered abelian groups.  We set $\mathcal{S}_n:=G/\nsim$, with $a\sim a'$ if and only if $H_n(a)=H_n(a')$, and let $\mathfrak{s}_n:G\to \mathcal{S}_n$ be the canonical map, we denote $H_n(a)$ by $G_\alpha$ for $\mathfrak{s}_n(a)=\alpha$.

Since the system of convex subgroups of an ordered abelian group are linearly ordered, $\mathcal{S}_n$ is an interpretable set linearly ordered by $\alpha\leq \alpha'$ if $G_\alpha\subseteq G_{\alpha '}$.

For any $\alpha\in \mathcal{S}_n$ and $m\in\mathbb{N}$ define \[G_\alpha^{[m]}:=\bigcap \{H+mG: G_\alpha\subsetneq H\subseteq G,  \text{  $H$ a convex subgroup}\}.\] 

Other than the sorts $\mathcal{S}_p$, Cluckers-Halupczok define two more auxiliary sorts $\mathcal{T}_p$ and $\mathcal{T}_p^+$ parametrizing more definable convex subgroups of $G$. It suffices, for our needs, to know that they are intersections and unions of convex subgroups $G_\alpha$ for $\alpha$ ranging in $\mathcal{S}_p$.

\begin{remark}
	As we will need results from \cite{schmitt-hab} we note that the groups denoted $H_n(a)$  in \cite{ClHa} (and in the present text) are denoted $F_n(a)$ by Schmitt. 
\end{remark}

We conclude this section with some basic results.

\begin{fact}\label{L:prop of H}\cite[Lemmas 2.8, 2.9, 2.10]{schmitt-hab}
\begin{enumerate}
\item $H_n(a)=H_n(a+ng)$, for any $g\in G$.
\item If $H_n(a)\subsetneq H_n(b)$ then $(a+nG)\cap H_n(b)\neq \emptyset$,
\item as a result, if $H_n(a)\subsetneq H_n(b)$ then $H_n(a+b)=H_n(b)$
\item and if $H_n(a)=H_n(b)$ then $H_n(a+b)\subseteq H_n(a)$.
\item For every prime $p$, $H_{p^m}(a)=H_{p^{m+k}}(p^ka)$.
\end{enumerate}
\end{fact}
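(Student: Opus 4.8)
The plan is to read off all five items from the defining property of $H_n(a)$ as the \emph{largest} convex subgroup of $G$ disjoint from the coset $a+nG$ (with $H_n(a)=0$ when $a\in nG$), using repeatedly two structural facts: the convex subgroups of $G$ form a chain under inclusion, and $H+nG$ is a subgroup of $G$ for every convex $H$. It is convenient to record the reformulation ``$a\notin H+nG$'' $\iff$ ``$(a+nG)\cap H=\emptyset$'' (valid for any subgroup $H$), and to keep an eye on the degenerate case $a\in nG$, which in each item is trivial or reduces at once to the generic one.

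Item (1) is immediate: $ng\in nG$ gives $a+nG=(a+ng)+nG$, so a convex subgroup avoids $a+nG$ iff it avoids $(a+ng)+nG$, and $a\in nG\iff a+ng\in nG$; hence the two largest such subgroups coincide. For item (2): if $a\in nG$ then $0\in(a+nG)\cap H_n(b)$ and there is nothing to prove, so assume $a\notin nG$; were $(a+nG)\cap H_n(b)=\emptyset$, then $H_n(b)$ would be a convex subgroup avoiding $a+nG$, whence $H_n(b)\subseteq H_n(a)$ by maximality, contradicting $H_n(a)\subsetneq H_n(b)$.

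Items (3) and (4) will both be squeezed out of (2). For (3), note first that $H_n(a)\subsetneq H_n(b)$ forces $b\notin nG$ (nothing is properly contained in the trivial subgroup). If $(a+b)+nG$ met $H_n(b)$, say $a+b+ng\in H_n(b)$, then by (2) some $a+ng'\in H_n(b)$, and subtracting puts $b\in H_n(b)+nG$, which is false; so $H_n(b)$ avoids $(a+b)+nG$, and hence $H_n(b)\subseteq H_n(a+b)$ by maximality. If this inclusion were strict, then $H_n(a)\subsetneq H_n(b)\subsetneq H_n(a+b)$, and applying (2) to the pairs $(a,a+b)$ and $(b,a+b)$ puts both $a$ and $b$ in $H_n(a+b)+nG$, hence $a+b\in H_n(a+b)+nG$; but $H_n(a+b)\supseteq H_n(b)\neq 0$ gives $a+b\notin nG$, so this contradicts the defining disjointness. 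Thus $H_n(a+b)=H_n(b)$. For (4), put $H:=H_n(a)=H_n(b)$; since the convex subgroups form a chain, either $H_n(a+b)\subseteq H$ (the claim) or $H\subsetneq H_n(a+b)$, and in the latter case (2) applied to $(a,a+b)$ and to $(b,a+b)$ again puts $a+b\in H_n(a+b)+nG$, contradicting disjointness (again $H_n(a+b)\neq 0$, so $a+b\notin nG$).

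Item (5) is the one genuinely using arithmetic of $p$ — really only that $G$ is torsion-free and that convex subgroups absorb divisors. For a fixed convex subgroup $H$ I would prove ``$a\notin H+p^mG$'' $\iff$ ``$p^ka\notin H+p^{m+k}G$''; granting this, the largest convex subgroups witnessing the two sides coincide, which is the assertion, and the degenerate cases match since torsion-freeness gives $a\in p^mG\iff p^ka\in p^{m+k}G$. The implication $(\Leftarrow)$ is trivial: $a=h+p^mg$ yields $p^ka=p^kh+p^{m+k}g$ with $p^kh\in H$. For $(\Rightarrow)$, from $p^ka=h+p^{m+k}g$ we get $p^k(a-p^mg)\in H$, and since $a-p^mg$ lies between $0$ and $p^k(a-p^mg)$ (in one order or the other), convexity of $H$ gives $a-p^mg\in H$, i.e.\ $a\in H+p^mG$. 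I do not anticipate a real obstacle here: the only genuine content is the correct use of maximality in (2) and its propagation through (3)--(4) while tracking the coset condition $a+b\notin nG$; (5) is of a slightly different flavour but needs only the two elementary facts just mentioned.
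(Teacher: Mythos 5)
Your proof is correct and complete. The paper itself does not prove this statement; it is recorded as a Fact with a citation to Schmitt's Habilitationsschrift, so there is no in-text argument to compare against. Your self-contained derivation from the defining property of $H_n(a)$ as the largest convex subgroup disjoint from $a+nG$ is sound: in particular, you correctly track the degenerate case $a\in nG$ (via $H_n(a)=0$), you properly invoke maximality to get one inclusion and then rule out strictness in (3), you exploit the chain property of convex subgroups in (4), and in (5) you rightly reduce to the equivalence "$a\notin H+p^mG$ $\iff$ $p^ka\notin H+p^{m+k}G$" using torsion-freeness and the absorption of $p^k$-multiples by convexity. A reader checking Schmitt would find the same underlying mechanism; your version has the merit of being explicit about where $a+b\notin nG$ is needed and why it holds in each subcase.
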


\subsection{Examples}
Some important examples of ordered abelian groups:

\begin{example}\cite[Lemma 1.19]{schmitt-hab}\label{Zp}
Let $\chi:\{\text{primes}\}\to \mathbb{N}\cup\{\aleph_0\}$ be a function and $B=\bigcup_p \{B_p : p\text{ prime}\}$ be a linearly independent subset of $\mathbb{R}$ as a $\mathbb{Q}$-vector space such that the $B_p$ are disjoint and $|B_p|=\chi(p)$.
Let $G=\sum_p \mathbb{Z}_{(p)}\otimes\langle B_p \rangle$, where $\mathbb{Z}_{(p)}=\{n/m\in\mathbb{Q}:\gcd(m,p)=1\}$ and $\langle B_p\rangle$ is the $\mathbb{Z}$ module generated by $B_p$. Due to the linear independence of $B$,
\[G=\bigoplus\{ \mathbb{Z}_{(p)}\cdot b:p\text { prime, } b\in B_p\},\] 
and thus $[G:pG]=p^{\chi(p)}$ for every prime $p$. Letting $G$ inherit the order from $\mathbb{R}$ we get a dense archimedean group with the same property.
\end{example}

\begin{example}
Any discrete archimedean group is isomorphic (as an ordered abelian group) to $\Zz$. 
\end{example}

\begin{example}\label{e:lexi-sum}
Let $(I,<)$ be an ordered set and for each $i\in I$ let $G_i$ be an ordered abelian group. Let $\prod_{i\in I} G_i$ be the direct product of the groups, as abelian groups. For $f\in \prod_{i\in I} G_i$ we define \[supp(f)=\{i\in I:f(i)\neq 0\}.\]
The \emph{Hahn-product} of the $G_i$ is the subgroup \[H:=\{f\in \prod_{i\in I} G_i: supp(f)\text{ is a well ordered subset of } I\}\] endowed with an order defined by \[f<g \Leftrightarrow f(i)<g(i)\text{ where } i=\min supp(g-f).\]
The subgroup \[\bigoplus_{i\in I} G_i=\{f\in H: supp(f)\text{ is finite}\}\] is called the \emph{lexicographic product$\backslash$sum.}

\end{example}
\subsection{Strong dependence, burden and dp-rank}
We remind the basic model theoretic definitions with which this paper is concerned:
\begin{definition}\label{D:dprk}
Let $T$ be complete theory and $\mathfrak{C}$ a sufficiently saturated model. All elements and sequences below are taken from $\mathfrak{C}$.
\begin{enumerate}

\item $T$ has an \emph{inp-pattern of depth $\kappa$ over $A$} if
there are $(b_i^{\alpha})_{i<\omega}$, where $\alpha<\kappa$, integers $k^\alpha<\omega$ and formulas $\varphi^{\alpha}(x,y^{\alpha})$ such that each system $\{\varphi^{\alpha}(x,b_i^{\alpha}):i<\omega\}$ is $k^\alpha$-inconsistent, but for any function $\eta\in\omega^{\kappa}$ the partial type $\{\varphi^{\alpha}(x,b_{\eta(\alpha)}^{\alpha}):\alpha<\kappa\}$ is consistent.
\item The \emph{burden} (over $A$) of $T$ is the supremum over all $\kappa$ such that there is an inp-pattern of depth $\kappa$ (over $A$).
\item The \emph{dp-rank} (over $A$) of $T$ is the supremum over all $\kappa$ such that there is a $b$ and a system of $\kappa$ sequences mutually indiscernible over $A$ such that none of them is indiscernible over $Ab$.
\item For a structure $M$, define $\textrm{burden}(M)$ and $\dprk(M)$, over $A$ to be $\textrm{burden}(\mathrm{Th}(M))$ and $\dprk(\mathrm{Th}(M))$ over $A$, respectively.
\item $T$ is \emph{strongly dependent} if there are no $\aleph_0$ mutually indiscernible sequences and $b$ such that none of them are indiscernible over $b$.
\end{enumerate}
\end{definition}

\begin{remark}
\begin{enumerate}
\item In the compuation of the dp-rank of a theory $T$ the parameter set $A$ appearing in the definition does not make a difference.
\item In the definition of an inp-pattern, we may assume the $(b_i^{\alpha})_{i<\omega}$ are mutually indiscernible in which case we may require only that $\{\varphi^{\alpha}(x,b_i^{\alpha}):i<\omega\}$ be inconsistent.
\end{enumerate}
\end{remark}
The above definitions are tied together by: 
\begin{fact}\cite{adler}
If T is dependent then burden$(T)=\dprk(T)$.
\end{fact}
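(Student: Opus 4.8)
The plan is to prove the two inequalities $\dprk(T)\le\operatorname{burden}(T)$ and $\operatorname{burden}(T)\le\dprk(T)$, in each case by translating between the two kinds of combinatorial arrays computing them: inp-patterns for the burden and, for the dp-rank, \emph{ict-patterns} --- arrays $(b_i^\alpha)_{i<\omega,\,\alpha<\kappa}$ together with formulas $\varphi_\alpha(x,y_\alpha)$ such that for every $\eta\in\omega^\kappa$ the partial type $\{\varphi_\alpha(x,b_{\eta(\alpha)}^\alpha):\alpha<\kappa\}\cup\{\neg\varphi_\alpha(x,b_i^\alpha):\alpha<\kappa,\ i\neq\eta(\alpha)\}$ is consistent. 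The first step is the (standard) observation that the definition of dp-rank given above is equivalent to the existence of ict-patterns: given mutually indiscernible $(I_\alpha)_{\alpha<\kappa}$ none of which is indiscernible over $b$, a formula distinguishing tuples of each $I_\alpha$ over $b$, together with a grouping of each $I_\alpha$ into consecutive blocks, yields --- after a Ramsey extraction --- an ict-pattern of depth $\kappa$; conversely, extracting mutually indiscernible rows from an ict-pattern (which preserves the pattern, by a standard extraction argument) and realising the constant path produces such a tuple $b$. Throughout I use the Remark above, which lets me assume that the columns of an inp-pattern are mutually indiscernible and that its rows are merely inconsistent.

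For $\dprk(T)\le\operatorname{burden}(T)$: starting from an ict-pattern of depth $\kappa$ with mutually indiscernible rows, keep the rows but replace each $\varphi_\alpha(x,y_\alpha)$ by $\psi_\alpha(x,y_\alpha y'_\alpha):=\varphi_\alpha(x,y_\alpha)\wedge\neg\varphi_\alpha(x,y'_\alpha)$, and pair consecutive columns into $c_i^\alpha:=(b_{2i}^\alpha,b_{2i+1}^\alpha)$. Realising the ict-path $\eta$ with $\eta(\alpha)=2i_\alpha$ for all $\alpha$ gives a $c$ satisfying $\psi_\alpha$ on $c_{i_\alpha}^\alpha$, so every path of the new array is consistent; on the other hand, a single realisation of an entire row of the new array would satisfy $\varphi_\alpha$ on the even columns and fail it on the odd ones, an infinite alternation along the indiscernible sequence $(b_j^\alpha)_j$, contradicting the dependence of $T$. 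Hence the rows of the new array are inconsistent, and so $k$-inconsistent for some finite $k$ by compactness and indiscernibility of the row; this is an inp-pattern of depth $\kappa$.

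For $\operatorname{burden}(T)\le\dprk(T)$ --- the substantive direction --- observe first that an inp-pattern of depth $\kappa$ all of whose rows are $2$-inconsistent is automatically an ict-pattern: if $c$ realises a path, then $2$-inconsistency forces $\neg\varphi_\alpha(c,b_j^\alpha)$ at every column $j$ not hit by the path. So it suffices to convert an arbitrary inp-pattern --- mutually indiscernible rows, the $\alpha$-th being $k_\alpha$-inconsistent --- into one of the same depth with all rows $2$-inconsistent. Here dependence is used essentially: for any realisation $c$ of any path and any $\alpha$, the positive set $\{j:\models\varphi_\alpha(c,b_j^\alpha)\}$ is finite --- it is finite-or-cofinite because $(b_j^\alpha)_j$ is indiscernible, and not cofinite because the row is $k_\alpha$-inconsistent --- and by bounded alternation it is a union of boundedly many convex blocks (the bound depending only on $\varphi_\alpha$). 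Stretching each row along a dense linear order reduces each such convex block to a single column, and a further re-organisation of the columns and of the defining formulas --- the technical heart --- brings the number of positive instances per row along any path down to one, yielding a $2$-inconsistent inp-pattern of the same depth.

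The main obstacle is precisely this last reduction: converting the finiteness and convexity of the positive sets supplied by NIP into a single re-organisation of the array and of the defining formulas that works uniformly across all $\kappa$ rows and all paths at once. One also needs the extraction lemma producing, from any array, a mutually indiscernible array realising all the same paths, together with the usual care --- when $\kappa$ is finite --- about whether the relevant suprema are attained. I would carry out these two points following Adler \cite{adler}; granting them, the two inequalities above give $\operatorname{burden}(T)=\dprk(T)$.
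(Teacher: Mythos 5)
This statement is a Fact cited from Adler~\cite{adler}; the paper provides no proof of its own, so there is nothing internal to compare your argument against.

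Your overall plan is the standard one (translating between ict-patterns and inp-patterns), and the easy direction $\dprk(T)\le\operatorname{burden}(T)$ is essentially correct: pairing columns, forming $\psi_\alpha(x,yy')=\varphi_\alpha(x,y)\wedge\neg\varphi_\alpha(x,y')$, and using NIP to get row inconsistency via unbounded alternation along the indiscernible row is a legitimate argument (you do also need, and implicitly use, the Ramsey extraction that lets you assume the ict-rows are mutually indiscernible). However, as a \emph{proof} of the Fact the proposal has a genuine gap precisely where you flag it: the reduction of a $k_\alpha$-inconsistent inp-pattern to a $2$-inconsistent one. You correctly observe that a $2$-inconsistent inp-pattern is automatically an ict-pattern, but the "re-organisation of the columns and of the defining formulas --- the technical heart" is not carried out; you explicitly defer it to Adler. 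This is the substance of the theorem, not a routine detail: the positive sets $\{j:\models\varphi_\alpha(c,b_j^\alpha)\}$ are already finite of size $<k_\alpha$ purely by $k_\alpha$-inconsistency (so, contrary to what you write, "stretching along a dense order and collapsing convex blocks" accomplishes nothing, since singletons are already the only convex blocks), and the real difficulty is arranging that the realisation of a path hits exactly one column per row, uniformly in the path and across all $\kappa$ rows. Without an actual construction this direction remains unproved.

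A smaller point: the sentence "it is finite-or-cofinite because $(b_j^\alpha)_j$ is indiscernible" is false as stated --- indiscernibility alone does not force a positive set to be finite or cofinite (consider $x>y$ along an increasing sequence in DLO, where any initial segment can occur). The finiteness you want follows directly from $k_\alpha$-inconsistency, as you also note, so the conclusion stands, but the stated justification should be dropped. In summary: correct high-level shape, one minor false intermediate claim, and the central reduction left unproved and deferred to the reference the paper itself cites.
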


\begin{fact}\cite[Observation 2.1]{strongdep}\label{F:shelah-observation}
$T$ is strongly dependent if and only if for any infinite indiscernible sequence $\langle\bar{a}_t:t\in I\rangle$ (the $\bar{a}_t$ may be infinite sequences themselves) and $c$ a singleton there exists a finite convex equivalence relation $E$ on $I$ such that if $s\in I$ then $\langle\bar{a}_t:t\in(s/E)\rangle$ is indiscernible over $c$.
\end{fact}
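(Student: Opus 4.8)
The plan is to prove both directions by contraposition, organised around the behaviour of an indiscernible sequence at cuts of its index order. Fix an infinite indiscernible sequence $I=\langle\bar a_t:t\in L\rangle$ and a singleton $c$, and call a cut of the Dedekind completion of $L$ \emph{$c$-bad} if $\langle\bar a_t:t\in J\rangle$ fails to be indiscernible over $c$ for every convex $J\subseteq L$ meeting both sides of the cut. The first step is a local-to-global lemma: \emph{there is a finite convex equivalence relation $E$ on $L$ with $\langle\bar a_t:t\in s/E\rangle$ indiscernible over $c$ for every $s\in L$ if and only if $I$ has only finitely many $c$-bad cuts.} The forward direction is clear, since the finitely many boundary cuts of the classes of $E$ contain every $c$-bad cut. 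For the converse, on a convex block $B$ containing no $c$-bad cut every cut of $B$ has a convex neighbourhood on which $I$ is indiscernible over $c$; a walking-and-compactness argument --- localising any hypothetical violation of $c$-indiscernibility inside $B$ by moving one coordinate at a time down to a single cut, which is then $c$-bad (the auxiliary coordinates fixed during the walk sit outside the block being localised into, which is what lets one upgrade ``bad over $c$ and those coordinates'' to ``bad over $c$'') --- shows $I\!\restriction\!B$ is itself indiscernible over $c$, so the equivalence relation whose classes are such blocks works. Thus the convex-partition property of the statement is equivalent to: every infinite indiscernible sequence has, over every singleton, only finitely many bad cuts.

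Given the lemma, the implication ``convex-partition property $\Rightarrow$ strongly dependent'' is immediate by contraposition. If the property fails there are an indiscernible sequence $\langle\bar a_t:t\in L\rangle$ and a singleton $c$ with infinitely many $c$-bad cuts; choose among them $\mathfrak d_0<\mathfrak d_1<\cdots$ and points $p_0<\mathfrak d_0<p_1<\mathfrak d_1<p_2<\cdots$ of $L$ so that the convex blocks $J_\ell=(p_\ell,p_{\ell+1})\cap L$ are pairwise disjoint with $\mathfrak d_\ell$ in the interior of $J_\ell$. The restrictions $\langle\bar a_t:t\in J_\ell\rangle$ are mutually indiscernible, being subsequences of a single indiscernible sequence supported on disjoint convex blocks, and none is indiscernible over $c$ since each contains a $c$-bad cut; so by Definition~\ref{D:dprk}(5) the theory is not strongly dependent.

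The reverse implication is the substantial one. Arguing by contraposition, suppose $T$ is not strongly dependent; I must produce a single infinite indiscernible sequence and a single element with infinitely many bad cuts over it. If $T$ has the independence property this is easy: with $\varphi(x,\bar y)$ witnessing IP, build an indiscernible $\langle\bar a_t:t\in\Qq\rangle$ and an $e$ for which $\{t:\varphi(e,\bar a_t)\}$ is dense and co-dense in $\Qq$, so that no convex block is $e$-indiscernible. So assume $T$ is dependent. By the cited equality of burden and dp-rank for dependent theories, failure of strong dependence produces an inp-pattern of arbitrarily large depth; since being an inp-pattern whose rows are mutually indiscernible and individually inconsistent is a type-definable condition on the rows (cf.\ the remark after Definition~\ref{D:dprk}), compactness followed by a Ramsey-style extraction carried out in the large depth yields such a pattern in which, moreover, a single formula $\varphi(x,\bar y)$ serves all rows, the rows are indexed by a dense order, and the sequence of rows is itself indiscernible (so the rows share an Ehrenfeucht--Mostowski type). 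Realising a constant path of the pattern gives a single element $e$ satisfying $\varphi(e,-)$ at one point of every row; since each row is inconsistent and, $T$ being dependent, the truth set of $\varphi(e,-)$ along an indiscernible sequence is a finite union of convex sets, $\varphi(e,-)$ changes truth value at some cut inside each of the (unboundedly many) rows. Assembling the rows into a single indiscernible sequence and arguing that it inherits infinitely many $e$-bad cuts, one concludes that the convex-partition property fails; passing to $T^{\eq}$ and replacing $e$ by a code for a tuple if necessary does not disturb strong dependence or the statement, so $e$ may be taken a genuine singleton.

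The main obstacle is this last assembly step. Two features make it delicate. First, ``not strongly dependent'' is not a type-definable condition on a configuration, so the naive witness cannot be stretched freely, which is why one detours through inp-patterns, and then the raw concatenation of the rows need not be indiscernible, forcing a further extraction. Second, one must ensure that the sequence finally produced genuinely has infinitely many $e$-bad cuts rather than finitely many clustered ones; arranging the rows to be dense and of large length and controlling how the finitely many truth-value changes of $\varphi(e,-)$ inside each row spread out across the unboundedly many rows is the crux, and is precisely the point on which this equivalence rests. The local-to-global lemma, by contrast, is routine once the order-topological compactness and the walking argument are set up with care.
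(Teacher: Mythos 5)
The paper does not prove this statement: it is quoted as a Fact from Shelah's \emph{Strongly dependent theories} (Observation 2.1 there), so there is no in-paper argument to compare against, and the only question is whether your proposed proof is sound. It is not, and the flaw is not only in the step you flag as delicate.

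Your local-to-global lemma, on which both directions of the argument rest, is not established by the sketch you give, and your closing remark that it is ``routine'' is where I would push back hardest. The walking argument correctly isolates, from a violation of $c$-indiscernibility on a block $B$, a position $k$ and a cut $\mathfrak{d}\in B$ across which $\phi(\bar a_{t_1},\dots,\bar a_{\,\cdot\,},\dots,\bar a_{t_n},c)$ changes truth value; this shows $\mathfrak{d}$ is bad over $c$ \emph{together with} the auxiliary parameters $\bar a_{t_i}$ for $i\neq k$. Your parenthetical then asserts that because these auxiliary parameters lie outside any small convex $J\ni\mathfrak{d}$, one may discard them and conclude $\mathfrak{d}$ is $c$-bad. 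That inference does not follow. Knowing that $I\restriction J$ is indiscernible over $c$, and (by indiscernibility of $I$) indiscernible over $\bar a_{t_1}\dots\bar a_{t_n}$, does not give indiscernibility over the union $c\bar a_{t_1}\dots\bar a_{t_n}$: types that agree separately over two parameter sets need not agree over their union. A cut can therefore be a genuine change-point for a formula whose remaining arguments sit far away while every small neighbourhood of it is perfectly $c$-indiscernible, so ``finitely many $c$-bad cuts'' in your sense does not yield a finite convex $E$. The version of the bookkeeping that actually supports the equivalence (and is what Shelah-style proofs use, together with NIP) tracks, for each formula $\phi(\bar x_1,\dots,\bar x_n;c)$, the finitely many alternation cuts along $I$ over the fixed parameter $c$, and calls a cut relevant if it is an alternation cut of \emph{some} such formula; those cuts are exactly the ones a finite $E$ must separate, and they need not be $c$-bad in your strictly local sense.

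The second gap is the one you yourself name as ``the crux'' and then leave untouched: assembling the mutually indiscernible rows (equivalently, the rows of the extracted inp-pattern) into a single indiscernible sequence with infinitely many $e$-bad cuts. Concatenating the rows end to end does not produce an indiscernible sequence, since mutual indiscernibility controls the type of a tuple from one row over the other rows, not the type of a mixed tuple drawn across a seam; while the column sequence $\langle(\bar a^0_t,\bar a^1_t,\dots)\rangle_t$ is indiscernible, it lives on a single index set shared by all rows, so the change-cuts you found in the individual rows may all project onto the same cut of that common index, leaving only finitely many bad cuts. Making them disperse across distinct cuts of a single indiscernible sequence requires a genuine construction --- Shelah's device of having each $\bar a_t$ enumerate a model, or an argument re-indexing the rows onto disjoint convex blocks of a longer indiscernible sequence --- none of which appears in your proposal. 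Since the lemma underlies both implications and the harder implication additionally depends on this missing assembly, the proof as written does not go through.
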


In Section \ref{fields} Shelah's expansion, $\CM^{sh}$ of a structure $\CM$, will play an important role. We remind that $\CM^{sh}$ is obtained by expanding $\CM$ with all externally definable sets. Shelah, \cite{ShDep09}, shows that if $\CM$ is dependent $\CM^{sh}$ has quantifier elimination, and is therefore dependent. It follows immediately from the above definitions (and is well known) that if $\CM$ is dp-minimal (resp., strongly dependent) then $\CM^{sh}$ is dp-minimal (resp., strongly dependent). 

\section{Ordered abelian groups with finite spines}\label{S:fs}
We start by defining our main object of interest for the remainder of the present and the following sections: 
\begin{definition}
A pure ordered abelian group $G$ has \emph{finite spines} if $\mathcal{S}_p$ is finite for all prime $p$.
\end{definition}

\begin{remark}
If $\mathcal{S}_p$ is finite for all $p$ then $\mathcal{S}_n$ is finite for all $n$ \cite[Lemma 2.2]{ClHa}.
\end{remark}

We will see in Proposition \ref{P:strgdep_implies_A_finite} that every strongly dependent ordered abelian group has finite spines. We collect a few easy or known facts about groups with finite spines. 

\begin{lemma}\label{L:finite spines, deg}
Let $G$ be an ordered abelian group with finite spines.  For $n\in \mathbb N$ denote 
\[H_n^-(g):=\bigcup \{H_n(h): g\notin H_n(h), h\in G\}.\] 
Then  \[X=\{H^-_n(g):g\in G\}=\{H_n(g):g\in G\}=Y\]
for all $n$. 
\end{lemma}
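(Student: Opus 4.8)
The plan is to prove the two inclusions $Y \subseteq X$ (i.e. every $H_n(g)$ is of the form $H_n^-(h)$ for some $h$) and $X \subseteq Y$ (i.e. every $H_n^-(g)$ is of the form $H_n(h)$ for some $h$), using that $\CS_n$ is finite and that the convex subgroups $G_\alpha$, $\alpha \in \CS_n$, are linearly ordered. The key structural point is that, since $\CS_n$ is finite, the family $\{H_n(g) : g \in G\}$ is a finite linearly ordered set of convex subgroups; write it as $0 = K_0 \subsetneq K_1 \subsetneq \dots \subsetneq K_r$ (with $K_0 = 0$ realized by any $g \in nG$). By definition $H_n^-(g)$ is the union of all $H_n(h)$ with $g \notin H_n(h)$; since this union is taken over a finite chain, it is simply the largest member $K_j$ of the chain that does not contain (a representative giving $H_n$-class of) the relevant index. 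So both $X$ and $Y$ are subsets of the same finite chain $\{K_0,\dots,K_r\}$, and the task reduces to a bookkeeping argument matching up the two lists.

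First I would establish $X \subseteq Y$. Fix $g$ and consider $H_n^-(g) = \bigcup\{H_n(h) : g \notin H_n(h)\}$. Because the $H_n(h)$ form a finite chain, this union equals $H_n(h_0)$ for the particular $h_0$ achieving the maximum; hence $H_n^-(g) \in Y$. (One must also handle the degenerate case where the set $\{h : g \notin H_n(h)\}$ is empty, i.e. $g$ lies in every $H_n(h)$: then $H_n^-(g) = 0 = H_n(g')$ for any $g' \in nG$, which is still in $Y$ provided $nG \neq G$; and if $nG = G$ then all the sets in question are $0$ and there is nothing to prove.) This direction is essentially immediate from finiteness of the spine.

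Next, for $Y \subseteq X$, fix $g$ with $K := H_n(g) \neq 0$ and let $K' $ be the immediate successor of $K$ in the finite chain $K_0 \subsetneq \dots \subsetneq K_r$ (if $K = K_r$ is the top, take $K' = G$, or argue separately). I would pick a witness $h$ with $H_n(h) = K'$; then by Fact~\ref{L:prop of H}(2), since $K = H_n(g) \subsetneq H_n(h) = K'$, we have $(g + nG) \cap K' \neq \emptyset$, so we may adjust $g$ within its coset to assume $g \in K'$, and then $g \notin K$ forces that $K$ is precisely the union of all $H_n(h')$ lying strictly below $K'$, i.e. $K = H_n^-(g')$ for a suitable $g'$ whose $H_n^-$ is computed against the index $K'$. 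The main obstacle, and the step needing the most care, is exactly this matching: making sure that for each link $K$ in the chain there is a group element $g'$ such that the set $\{H_n(h) : g' \notin H_n(h)\}$ has maximum exactly $K$ — this is where one genuinely uses Fact~\ref{L:prop of H}(2) (and possibly the characterization of $H_n$ as "largest convex subgroup not meeting $g + nG$") to move between a coset $g+nG$ and the convex subgroups that do or do not meet it. Once the chain is identified as a common indexing set for both $X$ and $Y$, the equality $X = Y$ follows, with the $\{H^-_n(g)\}$-description being the "complementary" way of naming the same finite chain of convex subgroups.
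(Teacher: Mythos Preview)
Your proposal is correct and follows essentially the same route as the paper: both directions rest on the observation that the $H_n(h)$ form a finite chain, so unions collapse to a maximum. The one place where you work harder than necessary is in the direction $Y\subseteq X$: to produce an element $x$ with $H_n^-(x)=K$, the paper simply takes \emph{any} $x\in K'\setminus K$ (which exists because $K\subsetneq K'$), rather than invoking Fact~\ref{L:prop of H}(2) to push $g$ into $K'$ along its coset. Once $x\in K'\setminus K$ is in hand, $x\notin H_n(h)$ iff $H_n(h)\subseteq K$, so $H_n^-(x)=K$ immediately. For the top element $K_r$ (where your ``take $K'=G$'' breaks down since no $h$ has $H_n(h)=G$), the paper handles it directly: if $H_n(h)$ is maximal in $Y$ and $h\notin nG$, then $h\notin H_n(h)\supseteq H_n(g)$ for all $g$, so $H_n^-(h)=\bigcup_g H_n(g)=H_n(h)$.
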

\begin{proof}
Because $\mathcal{S}_n$ is finite and convex subgroups are linearly ordered by inclusion, $X\subseteq Y$. In the other direction, if $H_n(h)$ is maximal within the set $X$ then $H_n(h)=\bigcup\{H_n(g):h\notin H_n(g)\}=H_n^-(h)$. Otherwise let $x\in H_n(h')\setminus H_n(h)$ where $H_n(h')$ is the immediate successor of $H_n(h)$ in $Y$. It is easy to see that $H_n^-(x)=H_n(h)$.
\end{proof}

\begin{proposition}\label{P:alldefconvin_finspine}
Let $G$ be an ordered abelian group with finite spines. Then $\{G_\alpha:\alpha\in \mathcal{S}_n\text{, }n\in\mathbb{N}\}$ are all the definable convex subgroups of $G$. In particular, there are only countably many definable convex subgroups.
\end{proposition}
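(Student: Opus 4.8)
The plan is to show that any definable convex subgroup $H$ of $G$ coincides with some $G_\alpha$, $\alpha\in\mathcal S_n$. The key structural input is that $G$ has finite spines, so for each $n$ the set $\{G_\alpha:\alpha\in\mathcal S_n\}$ is a finite chain of convex subgroups, and by Lemma~\ref{L:finite spines, deg} this finite chain is exactly $\{H_n(g):g\in G\}$. Taking the union over all $n$, the collection $\mathcal B:=\{G_\alpha:\alpha\in\mathcal S_n,\ n\in\mathbb N\}$ is a countable family of convex subgroups, linearly ordered by inclusion (since all convex subgroups of an ordered abelian group form a chain). So the real content is: every definable convex subgroup lies in $\mathcal B$.

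First I would invoke the quantifier elimination of Cluckers--Halupczok (in the language with the auxiliary sorts $\mathcal S_p$, $\mathcal T_p$, $\mathcal T_p^+$): a definable convex subgroup $H$ is defined by some formula, and after quantifier elimination the defining condition on the main sort is a Boolean combination of divisibility conditions $x\in y+ nG$, order conditions, and conditions involving the $H_n(\cdot)$ maps into the auxiliary sorts. Because $H$ is a convex \emph{subgroup}, it is downward closed in absolute value and closed under addition, which rigidly constrains which such formulas can cut it out; concretely, I expect that $H$ must be a union or intersection of sets of the form $\{g : H_n(g)\subseteq G_\alpha\}$ for various $n,\alpha$. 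Here is where the paper's remark that $\mathcal T_p$ and $\mathcal T_p^+$ consist only of intersections and unions of the $G_\alpha$'s gets used: every convex subgroup named by the auxiliary sorts is already accounted for. Using finiteness of the spines, all these unions and intersections are finite, hence again of the form $G_\alpha$ for a single $\alpha$ (a finite intersection or union of members of a finite chain is a member of the chain). Combining, $H\in\mathcal B$.

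An alternative, more self-contained route avoids heavy quantifier elimination: given a definable convex subgroup $H$, pick $n$ large enough (using finiteness of spines and Fact~\ref{L:prop of H}(5), which relates $H_{p^m}$ and $H_{p^{m+k}}$) so that $H$ is ``visible'' at level $n$, i.e. $H$ is comparable to each $G_\alpha$, $\alpha\in\mathcal S_n$. If $H$ sits strictly between two consecutive $G_\alpha\subsetneq G_\beta$ in this finite chain, I would derive a contradiction by finding, inside $G_\beta\setminus H$, an element $h$ with $H_n(h)=G_\alpha$ (as in the proof of Lemma~\ref{L:finite spines, deg}) and another element witnessing that $H$ cannot be both convex, a subgroup, and definable while separating these — essentially the definable convex subgroups are exactly the ``jumps'' recorded by the $H_n$ maps. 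The main obstacle, and the step I would spend the most care on, is precisely this: pinning down exactly which formulas can define a convex subgroup, i.e. showing a definable convex subgroup cannot be strictly finer than the chain $\{G_\alpha : \alpha\in\mathcal S_n\}$ for every $n$. Everything after that — countability, and that finite Boolean combinations within a finite chain stay in the chain — is routine.
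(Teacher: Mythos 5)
Your proposal correctly identifies the locus of difficulty but does not resolve it, and that unresolved step is the entire content of the proposition. The paper's proof is short precisely because it cites a nontrivial external result: by Delon--Farr\'e (Theorem 4.1 there), \emph{every} definable convex subgroup $H$ of \emph{any} ordered abelian group satisfies $H=\bigcap_{g\notin H}H^-_n(g)$ for some $n\in\mathbb N$. This is a one-shot characterization of definable convex subgroups, and once one has it, finiteness of the spines plus Lemma~\ref{L:finite spines, deg} immediately give $H=H_n(g)$ for some $g$, hence $H\in\{G_\alpha:\alpha\in\mathcal S_n\}$. You do not know or invoke this result, so you are left trying to rederive it.

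Both of your suggested routes fall short of that. In the quantifier-elimination route you write that you \emph{expect} a convex subgroup cut out by a Boolean combination of Cluckers--Halupczok basic formulas to be a union or intersection of sets $\{g:H_n(g)\subseteq G_\alpha\}$, but you give no argument. The basic relations also involve congruences modulo $G_\alpha^{[m]}+nG$, and ruling out that such a combination could carve out a ``new'' convex subgroup is exactly the hard work; your observation that finite Boolean combinations inside a finite chain stay in the chain only helps after you already know $H$ lies in the chain. The ``self-contained'' alternative does not get off the ground either: since all convex subgroups of an ordered abelian group are linearly ordered, $H$ is automatically comparable to every $G_\alpha$ for every $n$, so ``picking $n$ large enough so that $H$ is comparable'' imposes no constraint, and you offer no mechanism for the claimed contradiction when $H$ sits strictly between consecutive $G_\alpha$'s. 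In short, you have correctly reduced the proposition to the statement ``a definable convex subgroup cannot be strictly finer than the chain $\{G_\alpha\}$,'' but that reduction is circular: it is the proposition itself, and the missing ingredient is precisely the Delon--Farr\'e theorem (or an equivalent analysis of which formulas can define a convex subgroup).
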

\begin{proof}
By \cite[Theorem 4.1]{delon-farre}\footnote{By \cite[Section 1.5]{ClHa}, what Schmitt and Delon-Farr\'e denote by $A_n(g)$ is equal to $H^-_n(g)$.}, for every definable convex subgroup of (any) ordered abelian group, there exists $n\in \mathbb{N}$ such that \[H=\bigcap_{g\notin H}H^-_n(g).\]

If $G$ has finite spines, then by Lemma \ref{L:finite spines, deg}, $H=H_n(g)$ for some $n\in\mathbb{N}$ and $g\in G$.
\end{proof}

Quantifier elimination for $G$ with finite spines is considerably simpler than in the case of arbitrary ordered abelian groups: 

\begin{proposition}\label{P:QE for finite spines}
Let $G$ be an ordered abelian group with finite spines and let $\{H_{i}\}_{i<\alpha}$ be its definable convex subgroups (including \{0\}) for some $0<\alpha\leq \omega$. Then $G$ has quantifier elimination in the the following language:
\[L=L_{oag}\cup \{(x=_{H_i} y +k_{G/H_i})_{k\in \mathbb{Z}, i< \alpha}, (x\equiv_{m,H_i} y+k_{G/H_i})_{k\in \mathbb{Z},m\in\mathbb{N}, i< \alpha}\},\] where
\begin{list}{•}{}
\item for each $k\in \mathbb{Z}$, ``$x=_H y+k_{G/H}$" is defined by $\pi (x)=\pi (y)+k_{G/H}$ for $\pi: G\to G/H$ and $k_{G/H}$ denotes $k$ times the minimal positive element of $G/H$, if it exists, and $0$ otherwise.
\item for each $k\in\mathbb{Z}$ and each $m\in \mathbb{N}$, "$x\equiv_{m,H} y+k_{G/H}$" is defined by $\pi(x)\equiv_m \pi(y) +k_{G/H}$.
\end{list}
\end{proposition}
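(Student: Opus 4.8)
The plan is to derive this from the Cluckers--Halupczok quantifier elimination for arbitrary ordered abelian groups \cite{ClHa}, exploiting that the auxiliary sorts $\mathcal{S}_p,\mathcal{T}_p,\mathcal{T}_p^+$ become finite and rigid once $G$ has finite spines. Recall that \cite{ClHa} gives quantifier elimination for $\mathrm{Th}(G)$ in a multi-sorted language with main sort $G$ (carrying the ordered-group structure), the sorts $\mathcal{S}_p$, and the further sorts $\mathcal{T}_p,\mathcal{T}_p^+$, together with a fixed list of relations tying elements of $G$ to the convex subgroups parametrized by the auxiliary sorts (divisibility of an element modulo such a subgroup, the sort-valued maps $\mathfrak{s}_{p^k}$, the relations phrased via $G_\alpha^{[m]}$, etc.). Write $L_{CH}$ for this language.

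First I would record that, under the finite-spines hypothesis, $\mathcal{S}_n$ is finite for all $n$ (the remark following the definition of finite spines), hence so are $\mathcal{T}_p$ and $\mathcal{T}_p^+$, being built from the finitely many $G_\alpha$ by intersections and unions; moreover these cardinalities are fixed across all models of $\mathrm{Th}(G)$. By Proposition~\ref{P:alldefconvin_finspine} the definable convex subgroups of $G$ are exactly the countably many $H_i$, and each $H_i$ is $\emptyset$-definable: it is definable, and it is $\aut(\mathfrak{C})$-invariant since automorphisms permute definable convex subgroups preserving inclusion, hence fix each member of the $\emptyset$-definable linear order $(\{H_i\},\subseteq)$. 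Consequently every element of every auxiliary sort is $\emptyset$-definable, and every $G_\alpha$, $G_\alpha^{[m]}$ (or at least every set $G_\alpha^{[m]}+m'G$), and auxiliary-sort element refers to one of the $H_i$.

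It then follows that, over $\mathrm{Th}(G)$, every $L_{CH}$-formula is equivalent to one obtained by replacing each auxiliary-sort quantifier by a finite disjunction over the $\emptyset$-definable constants $H_i$ and substituting these constants for the auxiliary-sort variables; after this substitution each atomic $L_{CH}$-relation becomes a $\emptyset$-definable relation on tuples from the main sort $G$. The remaining, essentially bookkeeping, step is to rewrite each such relation as a Boolean combination of $L_{oag}$-atomic formulas and of the new predicates $x=_{H_i}y+k_{G/H_i}$ and $x\equiv_{m,H_i}y+k_{G/H_i}$: for instance ``$x\in H_i+m'G$'' is the instance ``$x\equiv_{m',H_i}0$'' (with $y=0$, $k=0$); the value of $\mathfrak{s}_n(x)$ is pinned down by which of the conditions ``$x\equiv_{n,H_i}0$'' hold; the induced order on $G/H_i$ is $x<y\wedge\lnot(x=_{H_i}y)$; and the part of the Cluckers--Halupczok data recording discreteness of $G/H_i$ and the position of an element relative to a least positive element is precisely what the constants $k_{G/H_i}$ (well-defined, since ``$G/H_i$ is discrete'' is decided by $\mathrm{Th}(G)$ as $H_i$ is $\emptyset$-definable) together with the predicates $=_{H_i}$, $\equiv_{m,H_i}$ encode. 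Since conversely each new predicate of $L$ is quantifier-free definable in $L_{CH}$ once the $H_i$ are named, and each $L$-symbol is $L_{CH}$-definable, every $L$-formula is in particular an $L_{CH}$-formula and is therefore equivalent, modulo $\mathrm{Th}(G)$, to a quantifier-free $L$-formula; that is, $G$ eliminates quantifiers in $L$.

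I expect the one genuinely fiddly point to be the translation in the previous paragraph --- matching the exact list of Cluckers--Halupczok relations (in particular those phrased via $G_\alpha^{[m]}$ and the sort-valued maps, and the way they track discreteness of the quotients) to the predicates $=_{H_i}$ and $\equiv_{m,H_i}$ --- rather than anything conceptually deep. A self-contained alternative would be a direct back-and-forth between $|G|^{+}$-saturated models $M\equiv N$ over a common $L$-substructure, extending a partial $L$-isomorphism one element $a$ at a time; this is the usual ordered-abelian-group amalgamation together with the bookkeeping of the order type and congruence type of the image of $a$ in each of the finitely many relevant quotients $G/H_i$, but it is noticeably longer.
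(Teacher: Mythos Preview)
Your approach is essentially the same as the paper's: both derive the result from the Cluckers--Halupczok quantifier elimination, use finiteness of the spines to collapse the auxiliary sorts $\mathcal{S}_p,\mathcal{T}_p,\mathcal{T}_p^+$ to finitely many $\emptyset$-definable points (so quantifiers over them become finite disjunctions), and then translate the remaining relations into the proposed language $L$. The paper is terser and singles out one concrete point you left slightly vague, namely that $G_\alpha^{[n]}=G_{\alpha'}+nG$ for some $\alpha'\in\mathcal{S}_n$ (by \cite[Lemma~2.4]{ClHa} together with finiteness of $\mathcal{S}_n$), which is exactly what makes the relations $x\equiv_{m,\alpha}^{[m']}y$ redundant; your parenthetical ``or at least every set $G_\alpha^{[m]}+m'G$'' is the right instinct, and this lemma is the precise justification.
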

\begin{proof}
This is a direct consequence of the main theorem of \cite{ClHa}. The auxiliary sorts $\mathcal{T}_n$ and $\mathcal{T}_n^+$ do not add any new convex subgroups because they are unions or intersection of convex subgroups coming from $\mathcal{S}_n$, and $\CS_n$ is finite.
Also the ternary relation given by $x \equiv_{m,\alpha}^{[m']}y$ if and only if $x-y\in G_{\alpha}^{[m']}+mG$ is not needed, since by \cite[Lemma 2.4]{ClHa}, and the finiteness of $\mathcal{S}_n$, $G_\alpha^{[n]}=G_{\alpha'}+nG$ for some $\alpha'\in \mathcal{S}_n$.

\end{proof}

\begin{remark}
	We do not need predicates for $\pi(x)>\pi(y) +k_{G/H}$ since, for example,
	\[\pi(x)>\pi(y)+1_{G/H} \Leftrightarrow x>y\wedge x\neq_H y\wedge x\neq_H y+1_{G/H}.\]
\end{remark}

We will need the following result, due to Schmitt:

\begin{fact}\cite[Theorem 4.13]{schmitt-hab}
An ordered abelian group $G$ is elementary equivalent to a lexicographic sum of archimedean groups if and only if for all $n,m\in\mathbb{N}$ and $0\neq x\in G$ there exists $y\in G$ such that \[H_n(x)=H^-_{n\cdot m}(y).\]
\end{fact}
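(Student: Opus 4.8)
The plan is to prove the two directions separately: the forward direction by a direct computation inside a lexicographic sum (and a transfer argument), and the backward direction by feeding the hypothesis into Schmitt's classification of ordered abelian groups up to elementary equivalence.

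\medskip

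\noindent\emph{Forward direction.} For fixed $n$ and $m$ the displayed condition is first order: the relations $z\in H_n(x)$ and $z\in H^-_{nm}(y)$ are uniformly definable (Fact~\ref{L:prop of H} and \cite[Lemma 2.1]{ClHa}), so "$H_n(x)=H^-_{nm}(y)$'' is expressible, and one then writes $\forall x\neq 0\,\exists y$. Hence it suffices to verify the condition in a lexicographic sum $G=\bigoplus_{i\in I}G_i$ of archimedean groups. Writing $T_{>i}:=\bigoplus_{j>i}G_j$, the convex subgroups of $G$ are exactly the $T_{>i}$ together with their unions/intersections (an archimedean $G_i$ has no proper nontrivial convex subgroup), and a short support-and-divisibility computation, in the spirit of Lemma~\ref{L:finite spines, deg}, shows $H_n(g)=T_{>i^*}$ with $i^*=\min\{i\in \mathrm{supp}(g):g(i)\notin nG_i\}$ (and $H_n(g)=0$ if $g\in nG$), and similarly $H^-_{n}(y)=T_{>k}$ with $k=\min\{i\ge \min\mathrm{supp}(y):G_i\neq nG_i\}$. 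Now given $x\neq0$ with $H_n(x)=T_{>i^*}$ we have $x(i^*)\notin nG_{i^*}$, so $G_{i^*}\neq nG_{i^*}$; since $nmG_{i^*}\subseteq nG_{i^*}$ this forces $G_{i^*}\neq nmG_{i^*}$. Choosing $y$ supported exactly on $\{i^*\}$ with $y(i^*)\notin nmG_{i^*}$ then yields $H^-_{nm}(y)=T_{>i^*}=H_n(x)$, as required.

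\medskip

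\noindent\emph{Backward direction.} Here one uses Schmitt's quantifier elimination and the fact that $\mathrm{Th}(G)$ is determined by the isomorphism types of the spines $\mathcal{S}_n$ together with the regular-rank and discreteness/density data attached to their elements \cite{schmitt-hab}. The task is to translate the hypothesis into the invariant-theoretic description of lexicographic sums of archimedean groups. The point is that, for every $x$ and every $m$, the existence of $y$ with $H_n(x)=H^-_{nm}(y)$ says precisely that every convex subgroup visible in $\mathcal{S}_n$ is already "reached from below'' at each finer modulus $nm$: there is no definable convex subgroup trapped strictly between $H_n(x)$ and its successor when one refines to level $nm$, equivalently the subgroups $G_\alpha^{[m]}$ collapse onto $G_{\alpha'}+nmG$. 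Under Schmitt's description this is exactly the constraint saying that the jump quotients of the chain of convex subgroups carry no further divisibility structure, i.e. are archimedean. One then reads off, in a saturated model, the index chain $I$ given by the jumps of the system of convex subgroups and the archimedean groups $G_i$ given by the jump quotients, and checks that $\bigoplus_{i\in I}G_i$ has the same spine invariants as $G$, whence $G\equiv\bigoplus_{i\in I}G_i$.

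\medskip

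The main obstacle is the backward direction. The forward direction is essentially just bookkeeping of support and divisibility in $\bigoplus_{i\in I}G_i$ plus the elementary remark that non-$n$-divisibility implies non-$nm$-divisibility. For the converse one genuinely needs Schmitt's machinery: one must check (a) that the $H_n/H^-_{nm}$ condition pins down \emph{all} the relevant spine invariants, not merely the underlying coloured linear orders, and (b) that these invariants are realized by an honest lexicographic sum — in particular getting the discrete-versus-dense dichotomy and the indices $[G_i:pG_i]$ right in each jump. This is exactly the step for which we rely on \cite[Theorem 4.13]{schmitt-hab}.
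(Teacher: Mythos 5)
This statement is cited in the paper as a Fact from Schmitt's (unpublished) habilitation; the paper gives no proof, so your attempt is being judged on its own merits rather than against an in-text argument.

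Your forward direction is essentially correct. For each fixed $n,m$ the displayed condition is a single first-order sentence (as you note, both $z\in H_n(x)$ and $z\in H^-_{nm}(y)$ are uniformly definable, the latter via $\exists h\,(y\notin H_{nm}(h)\wedge z\in H_{nm}(h))$), so it transfers along elementary equivalence, and the computation inside $\bigoplus_{i\in I}G_i$ via Lemma~\ref{L:finspine_lexi}(1) does produce a suitable $y$. One small omission: when $x\in nG$ (so $H_n(x)=0$) your formula for $i^*$ does not apply; there one may simply take $y=0$, since $0\notin H_{nm}(h)$ never holds and hence $H^-_{nm}(0)=0$.

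The backward direction, however, is not a proof. You correctly identify that it needs Schmitt's full classification (that $\mathrm{Th}(G)$ is determined by the spines together with the auxiliary regular-rank and density data), and your informal reading --- that the hypothesis should force each jump quotient in the chain of convex subgroups to be archimedean --- is in the right spirit. But every step of the translation is asserted rather than argued: that ``$H_n(x)=H^-_{nm}(y)$ for some $y$'' is equivalent to the absence of intermediate definable convex subgroups at modulus $nm$; that this is equivalent to archimedean jump quotients; and that matching the resulting invariants with those of an honest lexicographic sum yields elementary equivalence. You then close by saying this step ``relies on \cite[Theorem 4.13]{schmitt-hab}'' --- but that is precisely the statement being proved, so the argument is circular. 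The backward implication is the genuinely hard half of Schmitt's theorem; your proposal explains why it is plausible but does not establish it.
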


%As the groups $H_n(a)$ obtain an explicit form this shows that groups with finite spines can be fairly well understood. This is summed up in the next two results:  

The application of the above fact to groups with finite spines are summed up in the next two results: 

\begin{corollary}\label{C:elem. equiv. arhcim grps}
Every ordered abelian group with finite spines is elementary equivalent to a lexicographic sum of non-zero archimedean groups.
\end{corollary}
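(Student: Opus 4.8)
The plan is to verify the criterion in Schmitt's Theorem 4.13 quoted just above: namely, that for every $n, m \in \mathbb{N}$ and every $0 \neq x \in G$ there exists $y \in G$ with $H_n(x) = H^-_{n\cdot m}(y)$. Once this is established, Schmitt's theorem gives directly that $G$ is elementary equivalent to a lexicographic sum of archimedean groups, and the archimedean summands can be taken non-zero simply by discarding trivial summands (a zero summand contributes nothing to the lexicographic sum).

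First I would fix $n, m$ and $0 \neq x \in G$, and consider the convex subgroup $H_n(x)$. Since $G$ has finite spines, Lemma \ref{L:finite spines, deg} applies with the index $n \cdot m$: the set $X = \{H^-_{nm}(g) : g \in G\}$ equals the set $Y = \{H_{nm}(g) : g \in G\}$, and both are finite. So it suffices to show that $H_n(x)$ itself belongs to this common set $X = Y$. The natural approach is to use Fact \ref{L:prop of H}(5), which gives $H_{p^k}(a) = H_{p^{k+\ell}}(p^\ell a)$ for primes $p$; iterating and combining the statements over the prime factorization of the ratio between $n$ and $nm$ should let me rewrite $H_n(x)$ as $H_{nm}(z)$ for a suitable $z$ obtained from $x$ by multiplying by the appropriate integer. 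Concretely, writing $m = \prod p_j^{e_j}$, one expects $H_{nm}(m x) = H_n(x)$ or something very close to it, after reducing to prime powers; alternatively, since only the convex subgroup matters and $\mathcal{S}_n, \mathcal{S}_{nm}$ are finite, one can argue that the (finitely many) convex subgroups arising as $H_n(\cdot)$ and as $H_{nm}(\cdot)$ coincide because each $H_n(x)$ is definable and hence, by Proposition \ref{P:alldefconvin_finspine}, already of the form $H_{n'}(g)$, and a divisibility/cofinality argument pins down $n' = nm$ up to passing to $X = Y$.

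The main obstacle I anticipate is the bookkeeping in matching the two moduli $n$ and $nm$: Fact \ref{L:prop of H}(5) handles prime powers cleanly, but $n$ and $m$ are arbitrary, so one must either decompose into prime-power components of $H$-groups and reassemble, or find a more direct argument that in a finite-spines group every $H_n(x)$ coincides with some $H^-_{nm}(y)$ purely from the fact that both families are finite sets of definable convex subgroups closed under the relevant operations. Given Lemma \ref{L:finite spines, deg} and Proposition \ref{P:alldefconvin_finspine}, I expect this to reduce to checking that the map $g \mapsto H_n(g)$ and the map $g \mapsto H_{nm}(g)$ have the same image, which should follow from Fact \ref{L:prop of H}(5) together with the observation that multiplication by $m$ is "invisible" to the convex-subgroup quotient once one is already working modulo $nm$. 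The remaining steps — invoking Schmitt's theorem and trimming zero summands — are routine.
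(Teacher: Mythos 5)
Your overall plan is the same as the paper's: verify Schmitt's criterion, using Lemma \ref{L:finite spines, deg} with modulus $nm$ to convert an $H_{nm}$-subgroup into an $H^-_{nm}$-subgroup, so that the whole problem reduces to showing $H_n(x)$ coincides with some $H_{nm}(z)$. The paper resolves that last step with a single citation: by \cite[Lemma 2.2]{ClHa} one has $\mathcal{S}_n \hookrightarrow \mathcal{S}_{n\cdot m}$, hence $H_n(x) = H_{nm}(z)$ for some $z$, and then Lemma \ref{L:finite spines, deg} gives $y$ with $H_{nm}(z) = H^-_{nm}(y)$.

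Where your sketch leaves a genuine gap is precisely in establishing $H_n(x) = H_{nm}(z)$. You guess, correctly, that $z = mx$ works, but you believe this needs Fact \ref{L:prop of H}(5) and a reduction to prime powers, and you only say you ``expect'' it to hold. In fact no prime-power bookkeeping is needed: $H_n(x) = H_{nm}(mx)$ follows directly from the definition of $H_n$. For any convex subgroup $H$ one has $x \in H + nG$ if and only if $mx \in H + nmG$. The forward direction is immediate; conversely, if $mx = h + nmg$ with $h \in H$, then $m(x - ng) = h \in H$, and since $x-ng$ lies between $0$ and $m(x-ng)$ convexity forces $x - ng \in H$. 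Hence the largest convex subgroup $H$ with $x \notin H + nG$ is the same as the largest with $mx \notin H + nmG$, i.e.\ $H_n(x) = H_{nm}(mx)$. Your alternative route, via Proposition \ref{P:alldefconvin_finspine} plus a ``divisibility/cofinality argument,'' does not work as stated: that proposition only tells you $H_n(x)$ equals some $H_{n'}(g)$, with no control on $n'$, which is exactly the control you need. So the plan is right and the conjectured identity is right, but the identity still needs a proof (or the citation to \cite[Lemma 2.2]{ClHa}); the route you propose through Fact \ref{L:prop of H}(5) is both unnecessary and not actually carried out.
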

\begin{proof}
Let $n,m\in \mathbb{N}$ and $0\neq x\in G$. Since, by \cite[Lemma 2.2]{ClHa}, $\mathcal{S}_n\hookrightarrow \mathcal{S}_{n\cdot m}$, there exists $z\in G$ such that $H_n(x)=H_{n\cdot m}(z)$ and by Lemma \ref{L:finite spines, deg} there exists $y\in G$ such that $H_n(x)=H_{n\cdot m}(z)=H_{n\cdot m}^-(y)$.
\end{proof}

Recall the notation and definitions from Example \ref{e:lexi-sum}.

\begin{lemma}\label{L:finspine_lexi}
Let $G=\bigoplus_{i\in I} G_i$ be a lexicographic product of non-zero archimedean groups.
\begin{enumerate}
\item For $g\notin nG$, \[H_n(g)=\{h\in G : \text{for all } k\leq j\text{, }h(k)=0\},\] where $j$ is the smallest index in $supp(g)$ such that $g(j)\notin nG_j$.
\item $\mathcal{S}_p$ is finite %for all $p$ 
if and only if $|\{i\in I: G_i \text{ not $p$-divisible}\}|<\infty$.

\end{enumerate}
\end{lemma}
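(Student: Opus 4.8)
The plan is to compute $H_n(g)$ directly from the definition, using the fact that in a lexicographic sum the convex subgroups are transparent: every convex subgroup of $G=\bigoplus_{i\in I}G_i$ has the form $H_{\geq j}:=\{h\in G:h(k)=0\text{ for all }k<j\}$ or $H_{>j}:=\{h\in G:h(k)=0\text{ for all }k\leq j\}$ for some $j\in I$ (together with $G$ and $\{0\}$), since each $G_i$ is archimedean and hence has no proper nontrivial convex subgroups. For part (1), first I would fix $g\notin nG$ and let $j$ be the least index in $supp(g)$ with $g(j)\notin nG_j$; such a $j$ exists because $supp(g)$ is finite and $g\notin nG$ means $g(i)\notin nG_i$ for some $i$, while $H_{>j}+nG\not\ni g$ is what we want to check. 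I would verify two things: (a) $g\notin H_{>j}+nG$, i.e.\ there is no $h\in G$ with $h(k)=0$ for $k\le j$ and $g-h\in nG$; this holds because $(g-h)(j)=g(j)\notin nG_j$. And (b) $H_{>j}$ is the \emph{largest} convex subgroup with this property: for any convex subgroup $H$ strictly larger than $H_{>j}$, i.e.\ $H\supseteq H_{\geq j}$, I would produce $h\in H$ with $g-h\in nG$. Here one uses that for $k<j$ we have $g(k)\in nG_k$ (by minimality of $j$), so one can cancel all coordinates of $g$ in positions $\le j$ except adjust $g(j)$; more precisely pick $h\in H_{\geq j}\subseteq H$ agreeing with $g$ on positions $\le j$ modulo the chosen $nG_k$-representatives and vanishing elsewhere, arranging $g-h\in nG$. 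Then $H_n(g)=H_{>j}$, which is exactly the displayed formula.

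For part (2), I would combine (1) with Lemma \ref{L:finite spines, deg} (or just directly). By (1), as $g$ ranges over $G\setminus pG$, the subgroup $H_p(g)$ equals $H_{>j}$ precisely when $g$ has a nonzero non-$p$-divisible coordinate at position $j$ and all earlier coordinates $p$-divisible; such $g$ exists iff $G_j$ is not $p$-divisible. Hence $\mathcal{S}_p=\{H_p(g):g\in G\}$ (quotiented by the $\sim$-relation, but distinct $H_{>j}$ give distinct classes) is in bijection with $\{j\in I:G_j\text{ not $p$-divisible}\}\cup\{0\}$ — note $H_n(g)=0$ when $g\in pG$ contributes the class of $0$. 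Therefore $\mathcal{S}_p$ is finite iff $\{i\in I:G_i\text{ not $p$-divisible}\}$ is finite.

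The only mild subtlety — and the step I'd expect to need the most care — is the "largest convex subgroup" half of part (1): one must check that \emph{every} convex subgroup strictly containing $H_{>j}$ already meets $g+nG$, and this requires knowing the convex subgroups of $G$ are exactly the $H_{\geq i}$ and $H_{>i}$, which in turn uses archimedeanness of each $G_i$ to rule out "partial" convex subgroups inside a single factor. Once that structural fact about convex subgroups of a lexicographic sum is in hand, the verification is a routine coordinate-by-coordinate computation, using that $\sum_{k<j}nG_k\times\{0\}\subseteq nG$ and $supp(g)$ finite so no well-ordering issue arises.
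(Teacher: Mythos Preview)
Your approach is exactly what the paper has in mind: it dismisses (1) as a ``straightforward calculation'' and for (2) records only that each $i$ with $G_i$ not $p$-divisible yields a distinct $H_p(e_i)\in\mathcal{S}_p$, where $e_i(j)=\delta_{i,j}$. Your write-up is more thorough, and your treatment of (2) spells out both directions where the paper explicitly gives only one.

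There is, however, a slip in your construction of $h$ in step (b). You take $h\in H_{\ge j}$ ``vanishing elsewhere'', i.e.\ $h(k)=0$ for $k>j$; but then $(g-h)(k)=g(k)$ for $k>j$, and nothing guarantees $g(k)\in nG_k$ there---$j$ is only the \emph{least} bad coordinate, not the unique one. The fix is immediate: set $h(k)=g(k)$ for all $k\ge j$ and $h(k)=0$ for $k<j$. Then $h\in H_{\ge j}\subseteq H$ (with $supp(h)\subseteq supp(g)$ finite), and $g-h$ is supported on $\{k<j\}$, where by minimality of $j$ each coordinate lies in $nG_k$; hence $g-h\in nG$ as required.
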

\begin{proof}
\begin{enumerate}
\item Straightforward calculation.
\item Let $e_i(j)=\delta_{i,j}$. It follows from (1) above that every $i\in I$ such that $G_i$ is not $p$-divisible gives a different group $H_p(e_i)\in \mathcal S_p$.
\end{enumerate}
\end{proof}

%\begin{lemma}
%$G$ is elementary equivalent to a lexicographic product $\bigoplus_{i\in I} G_i$, for which if $G_i$ is divisible then $i$ has no immediate successor.
%\end{lemma}
%\begin{proof}
%We will use the following easy claim
%\begin{claim}\cite[Lemma 3.9]{polyregular}
%The operation of finite lexicographic product of ordered groups preserves elementary equivalence.
%\end{claim}
%
%First, for any convex $J\subseteq I$ if $G_j$ is divisible for all $j\in J$ then by Proposition \ref{P:propofregular}, $\bigoplus_{j\in J}G_j\equiv \mathbb{Q}$. Next, notice that if $G_{i}$ is divisible for some $i$ and that $i$ has an immediate successor $i^+$, then $G_i\oplus G_{i^+}$ is also regular the elementary equivalent to an archimedean group.
%\end{proof}

\begin{example}
A group $G$ with finite spines may be strongly dependent, even dp-minimal even if it has infinitely many definable convex subgroups. E.g. \[G=\bigoplus_{p\text{ prime}} \mathbb{Z}_{(p)}\]
where $\Zz_{(p)}$ is as in Example \ref{Zp}. Indeed, since $[G:pG]<\infty$ for every prime $p$, by \cite[Proposition 5.1]{dpmin-val-fields} $G$ is dp-minimal. By an easy direct calculation $G$ has finite spines (see Proposition \ref{P:strgdep_implies_A_finite} for an abstract proof). 

By Lemma \ref{L:finspine_lexi}(1) the definable convex subgroups are all of the form \[\bigoplus_{p\leq p_0} 0\oplus \bigoplus_{p>p_0} \mathbb{Z}_{(p)},\] for prime $p_0$.
\end{example}

\section{Calculating the dp-rank}\label{S:cal-dprk}
In the present section we combine the results and observations collected in the previous sections to calculate the dp-rank of ordered abelian groups with finite spines. Let $G$ be an ordered abelian group with finite spines. We consider $G$ as a structure in the language $L$ of Proposition \ref{P:QE for finite spines}. The \emph{reduct of $G$ to the group language} is the restriction of $G$ obtained by dropping the order symbol. Namely, it is $G$ considered as a structure in the language: 
\[\CL_{\text{reduct}}=\CL_{Grp}\cup \{(x=_{H_i} y +k_{G/H_i})_{k\in \mathbb{Z}, i< \alpha}, (x\equiv_{m,H_i} y+k_{G/H_i})_{k\in \mathbb{Z},m\in\mathbb{N}, i< \alpha}\}.\]

Recall that a group $(G,+,0,\dots)$ is \emph{1-based} if every definable set (of $G^n$)  is a boolean combination of cosets of $\textrm{acl}^{\textrm eq}(\emptyset)$-definable subgroups (of $G^n$). In the following, by \emph{abelian structure} we mean an abelian group $A$ with some predicates for subgroups of powers of $A$. The key fact about abelian structures is: 
\begin{fact}\cite[Theorem 4.2.8]{wagner}.
Every abelian structure is $1$-based.
\end{fact}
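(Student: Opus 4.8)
The plan is to reduce $1$-basedness to the Baur--Monk pp-elimination theorem. Recall that an abelian structure is an abelian group $A$ together with predicates for subgroups of the powers $A^{k}$; after naming the distinguished subgroups, such a structure satisfies the same pp-elimination theorem as a module, proved by the same argument. So the first step is to invoke pp-elimination: modulo the (complete) theory, every formula $\varphi(\bar{x})$ is equivalent to a finite Boolean combination of positive primitive formulas. This is the substantial input, and it is precisely what \cite[Theorem 4.2.8]{wagner} provides; the proof is the classical argument comparing the Baur--Monk invariants $[\psi(\bar{x}):\theta(\bar{x})]$ as $\psi,\theta$ range over pp-formulas with $\theta$ implying $\psi$, the invariant sentences being absorbed once the theory is complete.

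The second step is the elementary observation that a pp-formula $\theta(\bar{x},\bar{y})$, evaluated at a parameter tuple $\bar{a}$, defines either $\emptyset$ or a coset of the subgroup $H_{\theta}:=\theta(A^{n},\bar{0})\leq A^{n}$: the solution set of a consistent inhomogeneous linear system over $\Zz$ (or over the relevant ring) is a coset of the solution set of the associated homogeneous system. Since $H_{\theta}$ is pp-definable over $\emptyset$, it is $\emptyset$-definable, hence in particular $\textrm{acl}^{\textrm{eq}}(\emptyset)$-definable. Combining the two steps, every definable subset of every $A^{n}$ is a finite Boolean combination of cosets of $\textrm{acl}^{\textrm{eq}}(\emptyset)$-definable (indeed $\emptyset$-definable) subgroups of $A^{n}$ --- which is exactly the definition of $1$-basedness.

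I expect the only genuine obstacle to be pp-elimination itself, in the generality of arbitrary abelian structures rather than pure $R$-modules; everything downstream is routine. In the write-up I would therefore quote \cite{wagner} for pp-elimination and spell out only the short passage from ``Boolean combination of pp-formulas'' to ``Boolean combination of cosets of $\textrm{acl}^{\textrm{eq}}(\emptyset)$-definable subgroups'', adding the remark that the reduct of $G$ to $\CL_{\text{reduct}}$ is visibly an abelian structure --- each of the relations $x=_{H_{i}}y+k_{G/H_{i}}$ and $x\equiv_{m,H_{i}}y+k_{G/H_{i}}$ cuts out a coset of a subgroup of $G^{2}$ --- so that the Fact applies to the structures actually used in the sequel.
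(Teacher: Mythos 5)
The paper does not prove this Fact; it is quoted verbatim from Wagner, and the paper's own contribution starts one line later in Proposition~\ref{P:reduct-is-1bsd}, where the Fact is applied. Your sketch is nonetheless a correct reconstruction of the standard argument behind the citation: pp-elimination (Baur--Monk/Weispfenning for abelian structures) reduces every definable set to a finite Boolean combination of pp-definable sets, and a consistent pp-formula with parameters defines a coset of the $\emptyset$-definable subgroup obtained by setting the parameters to $0$; this gives the coset characterisation of $1$-basedness that the paper takes as its definition, and indeed a bit more, since $\emptyset$-definable suffices.

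One small imprecision in your closing remark: the reduct of $G$ to $\CL_{\text{reduct}}$ is not literally an abelian structure in the sense of the Fact, because the relations $x=_{H_{i}}y+k_{G/H_{i}}$ and $x\equiv_{m,H_{i}}y+k_{G/H_{i}}$ for $k\neq 0$ define cosets of subgroups of $G^{2}$, not subgroups --- so the Fact does not ``apply directly''. The gap is harmless but must be bridged: one passes to the genuine abelian structure $(G,+,\{H_{i}\}_{i})$, which is $1$-based by the Fact, then notes that $1$-basedness survives adding constants (to name coset representatives) and taking reducts. This is precisely what the paper does in the proof of Proposition~\ref{P:reduct-is-1bsd}, with references to \cite{pillay}, so your ``remark'' should either be replaced by that argument or dropped in favour of pointing to Proposition~\ref{P:reduct-is-1bsd}.
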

This will allow us to compute the dp-rank of strongly dependent groups by, first, computing the dp-rank of their reduct to the group language (using \cite{1bsd}), and then compute the effect of re-introducing the order on the dp-rank.  Of course, quantifier elimination will play a crucial role in this computation. 

\begin{proposition}\label{P:reduct-is-1bsd}
The reduct of $G$ to the language $L_{\text{reduct}}$ is $1$-based.
\end{proposition}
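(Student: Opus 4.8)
The plan is to invoke the quantifier elimination result from Proposition \ref{P:QE for finite spines} and then recognize the resulting structure as an abelian structure, to which Wagner's theorem applies. First I would observe that the language $\CL_{\text{reduct}}$ is exactly the language $L$ of Proposition \ref{P:QE for finite spines} with the order symbol $<$ deleted. Since $G$ (with the order) has quantifier elimination in $L$, every formula in the reduct language is, in $G$ viewed in $L$, equivalent to a quantifier-free $L$-formula; the point to check is that after projecting away the order, quantifier elimination is \emph{preserved} in $\CL_{\text{reduct}}$. This is the standard fact that if $T$ has QE in a language $\CL$ and $\CL' \subseteq \CL$ is such that every atomic $\CL$-formula \emph{not} mentioning the deleted symbols is already expressible (over $T$) quantifier-freely in $\CL'$, then a priori one still needs that eliminating a quantifier over an $\CL'$-formula does not reintroduce the order; here this is automatic because the reduct is itself axiomatized by taking the $\CL'$-consequences of $T$, and one checks directly from the Cluckers--Halupczok elimination that the quantifier-free $L$-definable subsets of $G^n$ that happen to be $\CL_{\text{reduct}}$-invariant (equivalently, do not involve $<$) are generated by the group operation, the coset-equality predicates $=_{H_i}$, and the congruence predicates $\equiv_{m,H_i}$. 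In other words, one argues that the reduct of $G$ to $\CL_{\text{reduct}}$ has quantifier elimination.

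Next I would identify this quantifier-free structure as an abelian structure. The relations $x =_{H_i} y + k_{G/H_i}$ and $x \equiv_{m,H_i} y + k_{G/H_i}$, together with the graph of $+$, are all of the form: membership in a coset (by a $\0$-definable element, namely $k_{G/H_i}$ lifted appropriately) of a subgroup of $G^2$ or $G^3$. Concretely, $\{(x,y) : x =_{H_i} y\}$ is the subgroup $\{(x,y): x - y \in H_i\}$ of $G^2$, and $\{(x,y): x \equiv_{m,H_i} y\}$ is the subgroup $\{(x,y): x - y \in H_i + mG\}$ of $G^2$; the constants $k_{G/H_i}$ are elements of $\dcl^{\eq}(\0)$ (they are $0$-definable, being either $0$ or an integer multiple of the least positive element of $G/H_i$), so the shifted relations are cosets of these subgroups by $\0$-definable elements. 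Hence, after adding these (countably many) constants to the language — which does not affect $1$-basedness — $G$ in $\CL_{\text{reduct}}$ becomes precisely an abelian group equipped with predicates for subgroups of its finite powers, i.e. an abelian structure in the sense defined just above. By Wagner's theorem (Fact, \cite[Theorem 4.2.8]{wagner}) it is $1$-based.

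The main obstacle I expect is the bookkeeping in the first step: verifying that deleting $<$ from the Cluckers--Halupczok language genuinely leaves a structure with quantifier elimination, rather than merely model-completeness. One has to rule out that quantifier elimination in $L$ was "using" the order in an essential way to describe $\CL_{\text{reduct}}$-definable sets — for instance, one must check that an existential $\CL_{\text{reduct}}$-formula $\exists z\, \theta(x,y,z)$ with $\theta$ quantifier-free in $\CL_{\text{reduct}}$ is again quantifier-free in $\CL_{\text{reduct}}$ and not just in $L$. Here the key inputs are the normalizations already recorded in the proof of Proposition \ref{P:QE for finite spines}: the auxiliary sorts $\CT_n, \CT_n^+$ contribute no new convex subgroups, the relation $x \equiv_{m,\alpha}^{[m']} y$ is redundant since $G_\alpha^{[n]} = G_{\alpha'} + nG$ for some $\alpha' \in \CS_n$ by \cite[Lemma 2.4]{ClHa} and finiteness of $\CS_n$, and each $G_\alpha$ equals some $H_n(g)$ by Lemma \ref{L:finite spines, deg}; so the only predicates surviving are cosets and congruences modulo the finitely many $H_i$'s, which live purely in the pure group reduct. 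Once this is in place, the identification with an abelian structure and the appeal to Wagner are immediate and routine.
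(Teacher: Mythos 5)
Your endgame (identify the structure as essentially an abelian structure and apply Wagner's theorem) is exactly the right idea and matches the paper's key ingredient, but the QE detour you insert beforehand is both unnecessary and the weakest link in your argument. You flag yourself that verifying QE for the reduct is "the main obstacle," and indeed what you write there is not a proof: the claim that every quantifier-free $L$-definable set that "happens to be $\CL_{\text{reduct}}$-invariant" is automatically generated by the $\CL_{\text{reduct}}$-predicates is precisely what would need to be shown, and reducts of theories with QE do not in general inherit QE. The good news is that none of this is needed to get $1$-basedness, because $1$-basedness is a property of the full theory, not of the quantifier-free fragment — Wagner's theorem gives $1$-basedness for the abelian structure directly, with no QE hypothesis.

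The paper's route bypasses the issue entirely and is worth internalizing: take the abelian structure $(G,+,0,\{H_i\}_{i<\alpha})$, which is $1$-based by Wagner; adding the constants $k_{G/H_i}$ (lifts of integer multiples of the minimal positive element of $G/H_i$, when it exists) preserves $1$-basedness \cite[Remark 4.1.8]{pillay}; and finally $G$ in $\CL_{\text{reduct}}$ is a reduct of (in fact bi-interpretable with) this expanded structure, and reducts of $1$-based groups are $1$-based \cite[Proposition 4.6.4]{pillay}. One small caveat for your phrasing: even after adding constants, $G$ in $\CL_{\text{reduct}}$ is not \emph{literally} an abelian group with predicates for subgroups — the relations $=_{H_i}$ and $\equiv_{m,H_i}$ with $k\neq 0$ are coset predicates — so "becomes precisely an abelian structure" should be "is interdefinable with (equivalently, a reduct of) an abelian structure with constants." That interdefinability, plus the two preservation facts, is the whole proof.
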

\begin{proof}
Consider $G$ as an abelian group with predicates for $\{H_i\}_{i<\alpha}$, it is $1$-based. Adding constants, it is still $1$-based (see \cite[Remark 4.1.8]{pillay}). The group $G$ in the language $L_{\text{reduct}}$ is a reduct of this structure (in fact, they are bi-interpretable), hence it is also $1$-based (see \cite[Proposition 4.6.4]{pillay}).

\end{proof}

In what follows we will be using the following fact. 
\begin{fact}\cite[Proposition 3.3]{1bsd}\label{F:dp-1bsd}
Let $G$ be a $1$-based group. Then there is an inp-pattern of depth $\kappa$ over $\mathrm{acl}^{\mathrm{eq}}(\emptyset)$  if and only if there exist $\mathrm{acl}^{\mathrm{eq}}(\emptyset)$-definable subgroups $(H_\alpha)_{\alpha<\kappa}$ such that for any $i_0<\kappa$  \[\left[\bigcap_{i_0\neq \alpha<\kappa} H_\alpha:\bigcap_{\alpha<\kappa}H_\alpha\right]=\infty.\]
Furthermore, if such subgroups exist, they witness an inp-pattern of depth $\kappa$, i.e there exist an indiscernible array $(b^\alpha_i)_{\alpha<\kappa,i<\omega}$, such that $\{x\in b_i^\alpha H_\alpha\}_{\alpha<\kappa,i<\omega}$ forms an inp-pattern of depth $\kappa$.
\end{fact}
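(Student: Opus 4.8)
The plan is to prove both directions, using throughout the defining property of $1$-basedness recalled above: every definable subset of $G^n$ is a boolean combination of cosets of $\acl^{eq}(\emptyset)$-definable subgroups.

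\emph{From the subgroups to an inp-pattern (this also gives the ``Furthermore'').} Suppose $\acl^{eq}(\emptyset)$-definable subgroups $(H_\alpha)_{\alpha<\kappa}$ are given with $\bigl[\bigcap_{\beta\neq\alpha}H_\beta:\bigcap_{\beta<\kappa}H_\beta\bigr]=\infty$ for every $\alpha$. I would set $K_\alpha:=\bigcap_{\beta\neq\alpha}H_\beta$ and note $K_\alpha\cap H_\alpha=\bigcap_{\beta<\kappa}H_\beta$, so $[K_\alpha:K_\alpha\cap H_\alpha]=\infty$. Working in a saturated $\mathfrak{C}$, choose for each $\alpha$ elements $c_i^\alpha\in K_\alpha$ ($i<\omega$) lying in pairwise distinct cosets of $H_\alpha$, and put $\varphi^\alpha(x,y):=$ ``$x-y\in H_\alpha$''. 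Each row $\{\varphi^\alpha(x,c_i^\alpha):i<\omega\}$ is $2$-inconsistent, since two realizations would force $c_i^\alpha-c_j^\alpha\in H_\alpha$. For a path $\eta\in\omega^\kappa$, the type $\{x-c^\alpha_{\eta(\alpha)}\in H_\alpha:\alpha<\kappa\}$ is finitely satisfiable: for a finite set $\alpha_1,\dots,\alpha_n$ the element $g=\sum_{j\le n}c^{\alpha_j}_{\eta(\alpha_j)}$ works, because $c^{\alpha_j}_{\eta(\alpha_j)}\in K_{\alpha_j}\subseteq H_{\alpha_l}$ whenever $l\neq j$; hence it is realized by saturation. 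This already produces an inp-pattern of depth $\kappa$ over $\acl^{eq}(\emptyset)$. Extracting a mutually indiscernible array from $(c_i^\alpha)_{\alpha<\kappa,i<\omega}$ by the standard Ramsey argument preserves $2$-inconsistency of rows and, by compactness, consistency of all paths, which is exactly the last sentence of the statement.

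\emph{From an inp-pattern to the subgroups.} Conversely, assume an inp-pattern of depth $\kappa$ over $A:=\acl^{eq}(\emptyset)$. By the remark recorded after the definition of dp-rank, I may take the rows $(b_i^\alpha)_{i<\omega}$ mutually $A$-indiscernible and each row inconsistent. By $1$-basedness, each $\varphi^\alpha(x,b_i^\alpha)$ is a boolean combination of cosets of finitely many $\emptyset$-definable subgroups; these subgroups do not depend on $i$, and after shrinking the row the ``shape'' of this boolean combination can be taken uniform in $i$. Any realization of a path lands, in row $\alpha$, inside a single positive coset of one of these subgroups, minus finitely many smaller cosets. Since there are only finitely many such cells per row, a pigeonhole argument together with mutual indiscernibility lets me pass to a sub-pattern in which, in each row $\alpha$, the formula is a single coset $c_i^\alpha+H_\alpha$ of a single $\emptyset$-definable (hence $A$-definable) subgroup $H_\alpha$. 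Row-inconsistency then says the $c_i^\alpha$ lie in pairwise distinct cosets of $H_\alpha$. Finally, fixing $i_0=\alpha^\ast$, let $g_0$ realize the constant-$0$ path and, for $i<\omega$, let $g_i$ realize the path equal to $i$ at $\alpha^\ast$ and to $0$ elsewhere. Then $g_i-g_0\in H_\beta$ for all $\beta\neq\alpha^\ast$, so $g_i-g_0\in K_{\alpha^\ast}:=\bigcap_{\beta\neq\alpha^\ast}H_\beta$, while the $g_i-g_0$ lie in pairwise distinct cosets of $H_{\alpha^\ast}$. Hence $[K_{\alpha^\ast}:K_{\alpha^\ast}\cap H_{\alpha^\ast}]=\bigl[K_{\alpha^\ast}:\bigcap_{\beta<\kappa}H_\beta\bigr]=\infty$, as required.

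\emph{Main obstacle.} The delicate point is the reduction, in the second direction, of a boolean combination of cosets to a single coset while keeping an inp-pattern: one must run the ``finitely many cells per row'' pigeonhole uniformly across all $\kappa$ rows, using mutual indiscernibility to make the chosen cell independent of the path, and then verify that the resulting single-coset configuration is still $k$-inconsistent on each row and consistent along every path. Everything else is either the routine Ramsey extraction of mutually indiscernible arrays or direct bookkeeping with cosets and indices.
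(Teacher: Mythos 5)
This statement is a \emph{Fact} that the paper imports verbatim from the reference labelled \texttt{1bsd} (its Proposition~3.3); the paper contains no proof of it, only a remark paraphrasing what that proof shows (that the inp-pattern can be witnessed by intersections of the groups from the coset-family $\mathcal F$). So there is no ``paper's own proof'' to compare you against, and I will assess your argument on its own terms, against what the paper's remark reveals about the cited proof.

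Your first direction is correct and clean. Given the $H_\alpha$ with the index hypothesis, choosing $c_i^\alpha\in K_\alpha=\bigcap_{\beta\neq\alpha}H_\beta$ in pairwise distinct $H_\alpha$-cosets, taking $\varphi^\alpha(x,y):=x-y\in H_\alpha$, and verifying $2$-inconsistency of rows and consistency of paths via $g=\sum_j c^{\alpha_j}_{\eta(\alpha_j)}$ is exactly right; the Ramsey/compactness extraction of a mutually indiscernible array then yields the ``Furthermore.'' This matches what the paper's remark suggests the original proof does.

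In the converse direction, however, there is a genuine gap at precisely the point you flag as the ``main obstacle.'' You assert that, after a pigeonhole argument using mutual indiscernibility, each row may be replaced by a single coset $c_i^\alpha+H_\alpha$ and the data still forms an inp-pattern. This is the crux and it is not established. The difficulty is concrete: $1$-basedness gives $\varphi^\alpha(x,b_i^\alpha)$ as a boolean combination of cosets, and the ``positive part'' of a cell (an intersection of positive cosets, hence a coset of an intersection subgroup $H$) may be \emph{independent of $i$}, with the $k$-inconsistency of the row coming entirely from the subtracted cosets. In that case discarding the negative part destroys inconsistency, and your pigeonhole does not tell you which smaller subgroup should replace $H$. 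One can rescue the argument by showing that this configuration is in fact impossible in an inconsistent row with consistent paths --- essentially, if all cells live in one fixed coset of $H$ and the row is $k$-inconsistent, the subtracted cosets must cover $H$ with bounded multiplicity, forcing the relevant indices to be finite and hence only finitely many distinct cells, contradicting indiscernibility --- but that case analysis is the actual content of the lemma and does not follow from ``pigeonhole plus indiscernibility'' as written. As it stands, your proof of the second implication asserts the conclusion of the hard step rather than proving it.
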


\begin{remark}
The the proof of the above actually shows:
\begin{enumerate}
\item If every definable set is a boolean combination of cosets of some family $\mathcal F$ of definable groups, then the inp-pattern may be witnessed by intersections of definable groups from $\mathcal F$ (see \cite[Remark 3.3]{1bsd}).
\item The collection of subgroups witnessing such an inp-pattern of subgroups has the property that their intersection has unbounded index in any proper subintersection.
\end{enumerate} 
\end{remark}

Thus, in order to compute the dp-rank we must first study the definable subgroups. We start by collecting some useful well-known observations: 

\begin{lemma}\label{L:intersection of groups}
Let $G$ be an ordered abelian group.
\begin{enumerate}
\item Let $A\subseteq B$ and $C\subseteq D$ be subgroups of $G$ then \[(A+D)\cap (B+C)=A+(D\cap B)+C.\]
\item Let $H$ be a convex subgroup then $nG\cap H=nH$.
\item Let $H_1\subseteq\dots\subseteq H_k$ be convex subgroups and $n_1| n_2| \dots |n_k$ be integers then
\[(n_1H_1+n_2H_k)\cap (n_1H_2\cap n_3H_k)\cap\dots\cap(n_1H_{k-1}+n_kH_k)=\]
\[n_1H_1+n_2H_2+\dots+n_kH_k.\]
\item Let $H$ be a subgroup and $n=p_1^{e_1}\cdot\ldots\cdot p_k^{e_k}$ be the prime decomposition of an integer $n$, then \[H+nG=(H+p_1^{e_1}G)\cap\dots\cap (H+p_k^{e_k}G).\]
\end{enumerate}
\end{lemma}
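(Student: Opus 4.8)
\textbf{Proof plan for Lemma \ref{L:intersection of groups}.}

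The plan is to establish the four identities in order, since the later ones build on the earlier. For (1), the inclusion $\supseteq$ is immediate: $A+(D\cap B)+C$ sits inside $A+D$ (using $C\subseteq D$) and inside $B+C$ (using $A\subseteq B$, and $D\cap B\subseteq B$). For $\subseteq$, take $x$ in the left-hand side and write $x=a+d=b+c$ with $a\in A$, $d\in D$, $b\in B$, $c\in C$; then $d-c=b-a\in B$, and also $d-c\in D$ (since $c\in C\subseteq D$), so $d-c\in D\cap B$, whence $x=a+(d-c)+c\in A+(D\cap B)+C$. This is the purely group-theoretic modular-law computation and carries no ordering.

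For (2), $nH\subseteq nG\cap H$ is trivial. For the reverse, suppose $x\in H$ and $x=ng$ for some $g\in G$; I must find $h\in H$ with $x=nh$. This is where convexity enters: if $g\notin H$ then $g$ (or $-g$) is positive and strictly above every element of $H$, hence $ng\geq g > x$ if $x$ is written as a sum of positives — more carefully, $H$ convex and $x=ng\in H$ forces $|g|\le |x|$ would be the wrong direction, so one argues that $g\notin H$ with $g>0$ gives $ng>h$ for all $h\in H$ by archimedean-within-$H$... the clean statement is simply: since $H$ is convex and $0\le x = ng$ (WLOG $x\ge 0$, so $g\ge 0$), the element $g$ satisfies $0\le g\le ng = x\in H$, hence $g\in H$ by convexity. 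So $h:=g$ works. For (3), I would induct on $k$, peeling off the first factor and applying (1) repeatedly, using (2) to handle the $nG\cap H$ terms that appear when distributing; the divisibility chain $n_1\mid n_2\mid\cdots\mid n_k$ is exactly what makes the cross terms collapse, e.g. $(n_i H_i)\cap(n_j H_j)$ with $i<j$ reduces via (1)+(2) to $n_i(H_i\cap H_j)+\cdots = n_i H_i$ type expressions once one factors out the gcd. For (4), since the $p_\ell^{e_\ell}$ are pairwise coprime, the ideal identity $\bigcap_\ell (p_\ell^{e_\ell}) = (n)$ in $\Zz$ transfers: apply (1) with $A=C=H$ in the two-factor case $(H+p_1^{e_1}G)\cap(H+p_2^{e_2}G)=H+(p_1^{e_1}G\cap p_2^{e_2}G)=H+nG$ using that $p_1^{e_1}G\cap p_2^{e_2}G=p_1^{e_1}p_2^{e_2}G$ by coprimality (Bézout), then iterate.

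The main obstacle is really just bookkeeping in part (3): keeping track of which convex subgroups and which divisors survive each application of (1) and (2), and phrasing the induction so the intersection on the left telescopes correctly onto the sum on the right. I expect no conceptual difficulty — all four parts are elementary facts about subgroups of an ordered abelian group — so I would keep the write-up terse, proving (1) and (2) in a line or two each and indicating the induction for (3) and the coprimality reduction for (4) without grinding through every index.
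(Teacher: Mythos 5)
Your parts (1), (2), (3) match the paper's argument in substance. For (1) the paper simply invokes Dedekind's modular law for abelian groups, while you prove the modular law element-wise; the content is the same. For (2) your final clean argument (if $x=ng\in H$ with $x\ge 0$ then $0\le g\le ng=x$, so $g\in H$ by convexity) is exactly what the paper does, and you were right to discard the earlier muddled ``wrong direction'' digression. For (3) you outline the same telescoping induction the paper uses (the paper peels off the last factor rather than the first, but this is immaterial); the point, which you correctly identify, is that the divisibility chain plus (2) collapses the cross terms.

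There is, however, a genuine error in your treatment of (4). You claim that the identity
\[
(H+p_1^{e_1}G)\cap(H+p_2^{e_2}G)=H+(p_1^{e_1}G\cap p_2^{e_2}G)
\]
follows from (1) with $A=C=H$. It does not: with $A=C=H$, $D=p_1^{e_1}G$, $B=p_2^{e_2}G$, the hypotheses $A\subseteq B$ and $C\subseteq D$ of (1) become $H\subseteq p_2^{e_2}G$ and $H\subseteq p_1^{e_1}G$, which are false in general. In fact, $(H+A)\cap(H+B)=H+(A\cap B)$ is not a general subgroup identity (e.g.\ $H$ the diagonal in $\mathbb{Z}^2$, $A=\mathbb{Z}\times 0$, $B=0\times\mathbb{Z}$), so no amount of rearranging the modular law will give it; what makes it true here is precisely the coprimality. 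The correct route, and the one the paper takes, is to pass to the quotient $\pi\colon G\to G/H$: each $H+p_i^{e_i}G$ is $\pi^{-1}(p_i^{e_i}\bar{G})$, and $\bigcap_i p_i^{e_i}\bar{G}=n\bar{G}$ in the abelian group $\bar{G}=G/H$ by B\'ezout (the Chinese remainder theorem for $\mathbb{Z}$-modules). You do invoke B\'ezout and coprimality at the end, so the right idea is present, but the stated reduction via (1) is a non sequitur and should be replaced by the passage to $G/H$.
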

\begin{proof}
\begin{enumerate}

\item By an old (and easy) fact due to Dedekind, the lattice of subgroups of an abelian group is modular (i.e. if $x\leq z$ then $x\vee (y\wedge z)=(x\vee y)\wedge z$), so \[(A+D)\cap (B+C)=C+\left( (A+D)\cap B\right)=C+\left( A+(D\cap B)\right)=A+(D\cap B)+C.\]
\item Let $h\in nG\cap H$, and write $ng=h$ for $g\in G$. Replacing $h$ with $-h$ if needed, we may assume that $0<g$. Since $0<g<ng$ and $ng=h\in H$, convexity of $H$, $g\in H$.
\item By induction on $k$: The case $k=1$ is clear, so we proceed to the induction step. 
\[(n_1H_1+n_2H_k)\cap\dots\cap(n_1H_{k-2}+n_{k-1}H_k)\cap(n_1H_{k-1}+n_kH_k)=\]
\[(n_1H_1+n_2H_2+\dots+n_{k-2}H_{k-2}+n_{k-1}H_k)\cap (n_1H_{k-1}+n_kH_k).\]
Since $(n_1H_1+n_2H_2+\dots+n_{k-2}H_{k-2})\subseteq n_1H_{k-1}$, we may use $(1)$ and thus it is equal to
\[(n_1H_1+n_2H_2+\dots+n_{k-2}H_{k-2})+(n_{k-1}H_k\cap n_1H_{k-1})+n_kH_k).\]
Finally, using $(2)$, we get our result.
\item This is just the Chinese remainder theorem for $\mathbb{Z}$-modules (i.e. abelian groups) in $G/H$.
\end{enumerate}
\end{proof}
The following follows directly from the definition of $H_n(g)$:
\begin{lemma}\label{L:p-div-element-in-Sp}
Let $G$ be an ordered abelian group and $H_1\subsetneq H_2$ be convex subgroups. Then $H_2/H_1$ is not p-divisible if and only if there exists $H'\in\mathcal{S}_p$ with $H_1\subseteq H'\subsetneq H_2$.
\end{lemma}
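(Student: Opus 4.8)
\textbf{Proof plan for Lemma \ref{L:p-div-element-in-Sp}.}

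The plan is to prove each direction directly from the definition of $H_p(g)$ as the largest convex subgroup $H$ of $G$ with $g \notin H + pG$, using the fact that the convex subgroups of $G$ are linearly ordered by inclusion.

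For the left-to-right direction, suppose $H_2/H_1$ is not $p$-divisible. Then I would pick a coset representative: choose $a \in H_2$ such that $a + H_1$ is not in $p(H_2/H_1)$, i.e. $a \notin pH_2 + H_1$. Now consider $H_p(a)$. First, $H_p(a) \not\supseteq H_2$: if it did, then since $H_p(a)$ is convex and $H_p(a) \cap pG = p H_p(a)$ by Lemma \ref{L:intersection of groups}(2), we would have $a \in H_2 \subseteq H_p(a)$ but also $a \notin H_p(a) + pG$, and I would derive a contradiction — writing $a = h + pg$ is impossible because both $a$ and (after arranging $h \in H_p(a)$ suitably) the relevant pieces land inside the convex group. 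More carefully: I claim $a \notin pH_2 + H_1$ forces $H_1 \subseteq H_p(a) \subsetneq H_2$. That $H_p(a) \supseteq H_1$: if $h \in H_1 \setminus H_p(a)$ then by convexity $H_p(a) \subsetneq H_1$, but then $a \in H_1 + pG$ would follow only if... — here I should instead argue that $H_p(a) + pG$ cannot contain $a$; since $H_1 \subseteq H_2$ and $a \notin pH_2 + H_1 \supseteq pG \cap H_2 + H_1$... the cleanest route is: take $H' := H_p(a)$; by definition $a \notin H' + pG$, so in particular $a \notin pG$, and $H'$ is convex; and $H' \subsetneq H_2$ because $a \in H_2$ cannot lie in any convex subgroup avoiding $a + pG$ that contains $H_2$ only if $H_2 \subseteq H' + pG$, which would give $a \in H_2 \subseteq H' + pG$, contradiction. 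For $H_1 \subseteq H'$: the nontrivial point, argued via $a \notin pH_2 + H_1$ together with Lemma \ref{L:intersection of groups}(2) applied to $H_2$, showing $H_1$ does not meet $a + pG$ relative to $H_2$, hence $H_1 \subseteq H_p(a)$ by maximality. This produces the desired $H' = H_p(a) \in \mathcal{S}_p$ with $H_1 \subseteq H' \subsetneq H_2$.

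For the right-to-left direction, suppose there is $H' \in \mathcal{S}_p$ with $H_1 \subseteq H' \subsetneq H_2$, and write $H' = H_p(g)$ for some $g \in G \setminus pG$. By definition $g \notin H' + pG = H_p(g) + pG$, and since $H' \subsetneq H_2$ is convex, replacing $g$ by $\pm g$ we may take $g \in H_2$ (or rather: $g$ witnessing $H' = H_p(g)$ can be taken with $g \in H_2$, as $H_p(g) \subsetneq H_2$ and the convex structure lets us shift $g$ by an element of $pG$ into $H_2$ using Fact \ref{L:prop of H}(1)). Then $g \in H_2$ but $g \notin H_1 + pG$ (since $H_1 \subseteq H' = H_p(g)$ would only help, we actually need $g \notin pH_2 + H_1$): from $g \notin H_p(g) + pG \supseteq H_1 + pG$ we get $g \notin H_1 + pG$, and intersecting with $H_2$ via Lemma \ref{L:intersection of groups}(1),(2) gives $g + H_1 \notin p(H_2/H_1)$, so $H_2/H_1$ is not $p$-divisible.

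\textbf{Main obstacle.} The delicate point is the bookkeeping in the forward direction: correctly extracting from "$H_2/H_1$ not $p$-divisible" an element $g$ with $H_1 \subseteq H_p(g) \subsetneq H_2$, rather than an $H_p(g)$ that could a priori jump above $H_2$ or below $H_1$. Pinning down that $H_p(g)$ lands exactly in the interval $[H_1, H_2)$ requires a careful application of the convexity of subgroups together with Lemma \ref{L:intersection of groups}(2) ($pG \cap H = pH$ for convex $H$) and the maximality clause in the definition of $H_p$; this is where I expect to spend the real effort, whereas the reverse direction is essentially unwinding definitions.
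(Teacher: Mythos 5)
Your proof is correct, and it is precisely the direct unwinding of the definition of $H_p(\cdot)$ that the authors have in mind — the paper offers no argument at all beyond the remark that the lemma "follows directly from the definition of $H_n(g)$". For the record, the forward direction you were circling around is cleanest phrased as follows: choose $a\in H_2$ with $a\notin pH_2+H_1$; then $a\notin H_1+pG$, since any $a=h_1+pg$ with $h_1\in H_1$ forces $pg=a-h_1\in H_2$ and hence $pg\in pH_2$ by Lemma \ref{L:intersection of groups}(2), giving $a\in H_1+pH_2$, a contradiction; therefore $H_1\subseteq H_p(a)$ by maximality of $H_p(a)$, while $H_p(a)\subsetneq H_2$ because $H_p(a)\supseteq H_2$ would give $a\in H_2\subseteq H_p(a)+pG$, contradicting the definition of $H_p(a)$.
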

%\begin{proof}
%Assume $H_1/H_2$ is not p-divisible and let $a+H_2$ be the element witnessing this. Consider $H_p(a)$. By definition $H_p(a)\subsetneq  H_1$ and $H_p(a)\in\mathcal{S}_p$. If we would have $H_p(a)\subsetneq H_2$ then by definition $a\in H_2+pG$ but then also $a\in (H_2+pG)\cap H_1=H_2+pH_1$ which means that $a+H_2$ is p-divisible.
%
%For the other direction, let $H_2\subseteq H_p(a)\subsetneq H_1$. Thus $a\in H_1+pG$ and so $h-pg\in H_1$ for some $g\in G$. If $H_1/H_2$ is p-divisible, then there exists $b\in H_1$ with $a-pg+H_2=pb+H_2$. As a result $a\in H_2+pG$, contradiction. 
%\end{proof}

The following is a special case of \cite[Lemma A.2.1]{hodges}: 
\begin{fact}\cite[Lemma A.2.1]{hodges}\label{F:groups in torsion-free}
	The theory of torsion free abelian groups proves that for every $n_\alpha,\lambda_{\alpha,j}\in\mathbb{Z}$, the formula
	\[
	\exists \bar{y} \bigwedge_{\alpha\in J} \left( n_{\alpha} x +\sum_{j=1}^{|\bar y|} \lambda_{\alpha,j} y_j =0\right)
	\]
	is equivalent to $n|x$ for some integer $n$.
\end{fact}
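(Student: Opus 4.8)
The plan is to bring the linear system into Smith normal form and then read off the divisibility condition. Write $A=(\lambda_{\alpha,j})$ for the integer $|J|\times|\bar y|$ matrix of the coefficients of $\bar y$ and $\bar n=(n_\alpha)_{\alpha\in J}$ for the column of coefficients of $x$, so that the displayed formula asserts exactly that $-x\bar n$ lies in the image of $A$ acting on $G^{|\bar y|}$. Over $\Zz$ (a PID) there are unimodular matrices $U\in\gl_{|J|}(\Zz)$ and $V\in\gl_{|\bar y|}(\Zz)$ with $UAV=D$, where $D$ carries the invariant factors $d_1\mid d_2\mid\dots\mid d_r$ on its main diagonal ($r=\rk A$) and zeros elsewhere. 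Since $U$ and $V$ are invertible over $\Zz$ they induce bijections on $G^{|J|}$ and $G^{|\bar y|}$ for \emph{any} abelian group $G$; so applying the row operations encoded by $U$ to the equations and the substitution $\bar y=V\bar z$ shows the formula equivalent, over the theory of abelian groups, to
\[
\exists\bar z\ \Bigl(\,\bigwedge_{i=1}^{r} d_i z_i=-x c_i\ \wedge\ \bigwedge_{i=r+1}^{|J|} x c_i=0\,\Bigr),
\]
where $\bar c=(c_i)_i:=U\bar n$ depends only on the original integer coefficients.

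Next I would use torsion-freeness to analyse each conjunct. A conjunct $x c_i=0$ with $c_i\neq 0$ forces $x=0$, and is vacuous when $c_i=0$. For $i\le r$, I claim $\exists z_i\,(d_i z_i=-x c_i)$ is equivalent to $e_i\mid x$, where $e_i:=d_i/\gcd(d_i,c_i)$: writing $d_i=\gcd(d_i,c_i)\,e_i$ and $c_i=\gcd(d_i,c_i)\,c_i'$ (so $\gcd(e_i,c_i')=1$), torsion-free cancellation reduces $d_i z_i=-x c_i$ to $e_i z_i=-x c_i'$, whereupon B\'ezout coefficients for $e_i,c_i'$ express $x$ as a $\Zz$-combination of $x$ and $z_i$ all of whose terms are divisible by $e_i$; the converse implication is immediate. (When $c_i=0$ this reads $1\mid x$.)

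Assembling the cases: if some conjunct $x c_i=0$ has $c_i\neq 0$, the whole formula is equivalent to $x=0$, i.e.\ to $0\mid x$; otherwise it is equivalent to $\bigwedge_{i\le r}(e_i\mid x)$, i.e.\ to $n\mid x$ with $n:=\operatorname{lcm}(e_1,\dots,e_r)$ (and $n:=1$ when $r=0$). In every case $n$ depends only on the given integers $n_\alpha,\lambda_{\alpha,j}$, as required.

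I expect the one genuinely delicate point to be the legitimacy of the unimodular change of variables over an \emph{arbitrary} torsion-free group; this is sound precisely because a $\Zz$-unimodular matrix acts invertibly on $G^k$ for every abelian group $G$. Everything else is elementary $\gcd$/$\operatorname{lcm}$ bookkeeping, with the degenerate situations ($c_i=0$, $r=0$, or a forced $x=0$) absorbed by the conventions $\operatorname{lcm}(\dots,0,\dots)=0$ and ``$0\mid x\Leftrightarrow x=0$''.
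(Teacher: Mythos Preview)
The paper does not prove this statement; it is quoted as a fact from \cite[Lemma~A.2.1]{hodges} and used as a black box. So there is no proof in the paper to compare against.

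Your Smith-normal-form argument is correct and is the standard way to establish this (and is in substance how Hodges proves the more general statement for pp-definable subgroups over a PID). The key steps are all sound: unimodular $U,V$ act bijectively on $G^k$ for any abelian group $G$, torsion-freeness permits cancellation of the common factor $g=\gcd(d_i,c_i)$ in $g e_i z_i = -x g c_i'$, and the B\'ezout identity then shows $\exists z_i\,(e_i z_i=-x c_i')$ is equivalent to $x\in e_i G$. Your handling of the degenerate cases ($c_i=0$, $r=0$, a forced $x=0$) is also fine, with the convention that $0\mid x$ means $x=0$.
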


%\begin{fact}\cite[Lemma A.2.1]{hodges}\label{F:groups in torsion-free}
%The theory of torsion free abelian groups proves that for every $n_\alpha,\lambda_{\alpha,j}\in\mathbb{Z}$, and any term $t(\bar z)$, the formula
%\[
%\exists \bar{y} \bigwedge_{\alpha\in J} \left( n_{\alpha} x +t_\alpha(\bar z)+\sum_{j=1}^{|\bar y|} \lambda_{\alpha,j} y_j =0\right)
%\]
%is equivalent to $n|(x+t'(\bar z))$ for some integer $n$ and some term $t'(\bar z)$.
%\end{fact}

\begin{proposition}\label{P:definable-groups}
Let $G$ be an ordered abelian group with finite spines and $\{H_i\}_{i<\kappa}$ be all the definable convex subgroups of $G$, where $H_0=\{0\}$. Then every formula in the reduct language $L_{reduct}$ is a boolean combination of cosets of subgroups of the form 
\[H_j \text{ or } H_i+p^nG \text{, for } n\geq 0 \text{ and } H_i\in \mathcal S_p.\]
%Moreover, the groups of the form $H_i+p^nG$ can be taken with $H_i\in \mathcal S_p$. 
\end{proposition}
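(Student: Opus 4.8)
The plan is to reduce, via the module‑theoretic structure already isolated in the proof of Proposition~\ref{P:reduct-is-1bsd}, to a purely group‑theoretic computation inside the lattice of subgroups of $G$. Recall from there that $G$ in $L_{\mathrm{reduct}}$ is a reduct of the abelian structure $\mathcal A = (G; +, 0, (H_i)_{i<\kappa}, (c_i)_{i<\kappa})$, where $c_i$ names a fixed representative of the least positive element of $G/H_i$ when the latter is discrete and $c_i = 0$ otherwise. Hence every $L_{\mathrm{reduct}}$‑definable subset of $G$ is $\mathcal A$‑definable, and by the Baur--Monk theorem for modules (elimination down to boolean combinations of positive‑primitive formulas) it is a boolean combination of cosets $d + K$ with $d \in G$ and $K \le G$ a positive‑primitive (p.p.) subgroup defined over $\emptyset$, i.e.
\[
K \;=\; \bigl\{x \in G : \exists \bar y\ \bigwedge_{k=1}^{r}\bigl(n_k x + M_k(\bar y) \in H_{j_k}\bigr)\bigr\}
\]
for integers $n_k$, $\Zz$‑linear forms $M_k$, and convex subgroups $H_{j_k}$, each of which is one of the $H_i$ by Proposition~\ref{P:alldefconvin_finspine}. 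Since $d+\bigcap_t A_t = \bigcap_t(d+A_t)$, it suffices to show that every such $K$ is a \emph{finite intersection} of subgroups of the form $H_j$ or $H_i + p^n G$ with $H_i \in \mathcal S_p$.

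Two ingredients drive the computation. First, a single‑conjunct calculation: writing $m = \gcd$ of the coefficients of $M$ (the case $M\equiv 0$ being trivial), one has $\{x : \exists \bar y\,(nx + M(\bar y) \in H)\} = \{x : nx \in H + mG\}$; by Lemma~\ref{L:intersection of groups}(4) this equals $\bigcap_{p^e \| m}\{x : nx \in H + p^e G\}$, and for a single prime power, writing $p^a \| n$, the fact that $G/H$ is torsion free (here convexity of $H$ is essential) together with invertibility of $n/p^a$ modulo $p^e$ gives $\{x : nx \in H + p^e G\} = H + p^{\max(e-a,\,0)}G$. Second, a normalisation: for any $n\ge 0$ and any definable convex $H$, the subgroup $H + p^n G$ already has the required form — if $H$ lies inside some member of $\mathcal S_p$ then, taking $H'$ the least such, $H'/H$ is $p$‑divisible by Lemma~\ref{L:p-div-element-in-Sp} and finiteness of $\mathcal S_p$, so $H + p^n G = H' + p^n G$; otherwise $H$ contains every member of $\mathcal S_p$, hence $G/H$ is $p$‑divisible and $H + p^n G = G = H_j$. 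This disposes of $r=1$. For $r>1$ I would eliminate the existential variables one at a time: isolating a conjunct in which the variable occurs with nonzero coefficient fixes it modulo a convex subgroup up to a coset whose non‑emptiness is a new condition of the shape ``$\Zz$‑linear form $\in$ convex subgroup $+\,qG$''; substituting back and using $\{qh : h \in H'\} = qG \cap H' = qH'$ (Lemma~\ref{L:intersection of groups}(2)) and the modular identity (Lemma~\ref{L:intersection of groups}(1)) rewrites the conjunction as a p.p. condition with one fewer quantifier whose convex subgroups are still definable (Proposition~\ref{P:alldefconvin_finspine}). Iterating to $r=1$, then applying the two ingredients (and Lemma~\ref{L:intersection of groups}(4) to split composite moduli), expresses $K$ as a finite intersection of subgroups of the listed form.

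The hard part is the bookkeeping in this quantifier‑elimination step: when several conjuncts mention the variable being eliminated one must solve a simultaneous system of coset conditions, and in general a family of cosets of subgroups can be pairwise consistent without being jointly consistent. The way around this is to first pass to the ``$p$‑local'' situation via Lemma~\ref{L:intersection of groups}(4), where all subgroups in sight have the form $H + p^e G$ and can be arranged so that the linear ordering of the convex subgroups $H$ makes pairwise consistency imply joint consistency; setting up this reduction and verifying that no subgroup outside the allowed class is ever produced is where the real work lies, the rest being routine use of torsion‑freeness, convexity, and the identities of Lemma~\ref{L:intersection of groups}.
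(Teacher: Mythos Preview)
Your reduction to p.p.\ subgroups via Baur--Monk, the single-conjunct computation, and the normalisation of $H+p^nG$ to have $H\in\mathcal S_p$ are all correct and match the paper. Where you diverge is in handling a p.p.\ formula with several conjuncts: you propose to eliminate the existential variables one at a time, and you correctly identify the difficulty --- when the variable being eliminated occurs in several conjuncts, solving the resulting simultaneous coset conditions and checking that the substituted conditions stay within the allowed family is genuinely delicate. Your suggestion to ``go $p$-local first'' via Lemma~\ref{L:intersection of groups}(4) is unclear as stated, since that lemma decomposes subgroups of the form $H+nG$, whereas the conjuncts of the p.p.\ formula involve bare convex subgroups $H_{j_k}$ with no modulus to localise on. So the step you flag as ``where the real work lies'' is not actually carried out, and it is not evident how to do it cleanly.

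The paper sidesteps this entirely with a different idea. Since the p.p.\ subgroup $\psi(G)$ is $\emptyset$-definable, one may pass (Corollary~\ref{C:elem. equiv. arhcim grps}) to an elementarily equivalent lexicographic sum $G=\bigoplus_{i\in I}G_i$ of archimedean groups. There every definable convex subgroup is a tail $\bigoplus_{j>i_0}G_j$, so the condition $\bigwedge_\alpha(n_\alpha x+M_\alpha(\bar y)\in H_{j_\alpha})$ decouples coordinatewise into \emph{equational} p.p.\ conditions $\bigwedge_{\alpha\ge\beta}(n_\alpha x_i+M_\alpha(\bar y_i)=0)$ in the torsion-free groups $G_i$. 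Fact~\ref{F:groups in torsion-free} (Hodges) collapses each of these to a single divisibility $m_\beta\mid x_i$, from which one reads off directly that $\psi(G)=H_{\beta_1}+m_{\beta_1}H_{\beta_2}+\cdots+m_{\beta_k}G$. Rewriting this as an intersection of groups $H_i+p^nG$ is then a one-shot application of Lemma~\ref{L:intersection of groups}(3),(4). In short, the paper trades your variable-by-variable elimination for a structure theorem that converts the multi-conjunct problem into the already-solved case of p.p.\ formulas over torsion-free abelian groups; what your approach would have to earn by bookkeeping, the paper gets for free from the lexicographic decomposition.
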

\begin{proof}
For simplicity of notation we assume that the $H_i$ are enumerated by inclusion, i.e. if $\alpha<\beta<\kappa$ then $H_\alpha<H_\beta$. 

As in the proof of Proposition \ref{P:reduct-is-1bsd}, we may expand the reduct language to $\{G,+,0,\{H_i\}_{i<\kappa}\}$ (possibly with some constants). In that language, by \cite[Theorem A.1.1]{hodges}, every formula $\varphi(x, \bar b)$ is equivalent to a boolean combination of formulas of the form
\[\tag{*}
\exists \bar{y} \bigwedge_{\alpha\in J} \left( n_{\alpha} x +t_\alpha(\bar b)+\sum_{j=1}^{|\bar y|} \lambda_{\alpha,j} y_j \in H_\alpha\right),
\]
where $n_\alpha,\lambda_{\alpha,j}$ are integers, and $t_\alpha(\bar x)$ is a term.

Note that if $g_1,g_2\models \phi(x,\bar b)$ then $g_1-g_2\models \psi(x)$ where 
\[
\psi(x):=\exists \bar{y} \bigwedge_{\alpha\in J} \left( n_{\alpha} x +\sum_{j=1}^{|\bar y|} \lambda_{\alpha,j} y_j \in H_\alpha\right).
\]
So $\phi(x,\bar b)$ defines a coset of of the subgroup defined by $\psi(x)$. Thus it will suffice to show that any definable subgroup of $G$ of the form $\psi(x)$ is the intersection of subgroups of the desired form. Since $\psi(x)$ is $\emptyset$-definable, we may apply Corollary \ref{C:elem. equiv. arhcim grps}, and assume that $G=\bigoplus_{i\in I} G_i$, where all the $G_i$ are non zero archimedean ordered abelian groups.

%We may assume that $\varphi(x)$ is consistent. Consider 
%\[
%\psi(x,\bar z)=\exists \bar{y} \bigwedge_{\alpha\in J} \left( n_{\alpha} x +t_\alpha(\bar z)+\sum_{j=1}^{|\bar y|} \lambda_{\alpha,j} y_j \in H_\alpha\right).
%\]
%
% Since it is $\emptyset$-definable, we may assume, by Corollary \ref{C:elem. equiv. arhcim grps}, that $G=\bigoplus_{i\in I} G_i$, where all the $G_i$ are non zero archimedean ordered abelian groups. 

By Lemma \ref{L:finspine_lexi}(1) and Proposition \ref{P:alldefconvin_finspine} all definable convex subgroups of $G$ are of the form 
\[
H_\alpha=\bigoplus_{j\leq \alpha^-} 0\oplus\bigoplus_{j>\alpha^-} G_j
\]
for some $\alpha^-\in I$. 

Because the $H_\alpha$ are enumerated by inclusion, we get that $(a_i)_{i\in I}\models \psi(x)$  if and only if for every $\beta\in J$ and $i\leq \beta^-$ 
\[\tag{**}
a_i\models  \exists \bar{y_i} \bigwedge_{\beta \ge \alpha\in J} \left(n_\alpha x_i+\sum_{j=1}^{|\bar y_i|} \lambda_{\alpha,j} y_{i,j}=0\right).
\]
Note that, by Fact \ref{F:groups in torsion-free} for a fixed $\beta\in J$ there exists $m_\beta\in \mathbb N$ such that the formula $(**)$ is equivalent to $m_\beta\mid x$ (with $m_\beta$ independent of  $i\le \beta^-$). Assume that $J=\{\beta_1,\dots,\beta_k\}$, $\beta_1<\dots<\beta_k<\kappa$

\begin{claim} 
$\psi(G)=H_{\beta_1}+m_{\beta_1}H_{\beta_2}+\dots +m_{\beta_{k-1}}H_{\beta_k}+m_{\beta_k}G$. 
\end{claim}
\begin{claimproof}
Let $g=g_{i_1}+\dots+g_{i_m}\models \psi(x)$, where $supp(g)=\{i_1,\dots,i_m\}$ and $i_1<\dots<i_m$. Since, clearly, $\psi(G)\supseteq H_{\beta_1}$, we may assume that $i_m\leq \beta_1^-$. So for each $1\le j \le m$ there exists $1\le \ell \le k$ such that 
$g_{i_j}\le \beta_{\ell}^-$ so $g_{i_j}$ satisfies the corresponding formula (**), implying that $m_{\beta_\ell} \mid g_{i_j}$. So $g_{i_j}\in m_{\beta_\ell}H_{\ell -1}$. 

The other inclusion follows in a similar way from the characterization of those elements realizing $\psi(G)$ in (**), and the fact that if $\beta>\beta'$ then
\[
\exists \bar{y} \bigwedge_{\beta \ge \alpha\in J} \left(n_\alpha x+\sum_{j=1}^{|\bar y|} \lambda_{\alpha,j} y_{j}=0\right)
\]
defines a subgroup of 
\[
\exists \bar{y} \bigwedge_{\beta' \ge {\alpha}\in J} \left(n_{\alpha} x+\sum_{j=1}^{|\bar y|} \lambda_{{\alpha},j} y_{j}=0\right).
\]
\end{claimproof}

Since for any natural numbers $n,m$ and $H\subseteq H'$ convex subgroups,
\[nH+mH'=nH+mH+mH'=gcd(n,m)H+mH',\] we may assume that $m_{\beta_1}|m_{\beta_2}|\dots |m_{\beta_k}$ and that all the $m_{\beta_i}$ are distinct.
 By Lemma \ref{L:intersection of groups}(3) this implies that $\psi(G)$ is the intersection of subgroups of the form $n_iH_i\cap n_jH_j$. Finally, we finish by applying Lemma \ref{L:intersection of groups}(4) with the observation that for every $n\mid m$ and convex subgroups $H\subseteq H'$,
\[nH+mH'=(\{0\}+nH')\cap(H+mH')\]
and 
\[H+mH'=(H+mG)\cap(H'+\{0\}).\]

%Using Fact \ref{F:groups in torsion-free},  $\psi(x,\bar b)$ defines a subgroup of the form
%\[n_1H_{i_1}+\dots+n_kH_{i_k}+t'(\bar b),\] for some term $t'(\bar x)$ not dependent on $\bar b$ and $i_1\leq \dots\leq i_k$, where $H_{i_k}$ may be $G$.
%
%We may now restrict to groups. Will show each such group is an intersection of definable groups from the statement. 
% Furthermore, since for any $n,m$ natural numbers and $H\subseteq H'$ convex subgroups,
%\[nH+mH'=nH+mH+mH'=gcd(n,m)H+mH'\] and if $n\geq m$ then 
%\[nH+mH'=mH',\] we may assume that $n_1|n_2|\dots |n_k$ and that they are distinct.

%By Lemma \ref{L:intersection of groups}(1,2), in order to finish the proof we notice that for every $n,m$ natural numbers with $n|m$ and convex subgroups $H\subseteq H'$,
%\[nH+mH'=(\{0\}+nH')\cap(H+mH')\]
%and 
%\[H+mH'=(H+mG)\cap(H'+\{0\}).\]
%To conclude we use Lemma \ref{L:intersection of groups}(4).

To show that $H_i$ can be taken in $\mathcal S_p$, consider the subgroup $H_i+p^nG$. If $H_i\notin \mathcal{S}_p$ then let $H\in\mathcal{S}_p\cup\{G\}$ be such that there is no $H'\in \mathcal{S}_p$ with $H_i\subsetneq H'\subsetneq H$. Since $\mathcal{S}_p$ is finite such a subgroup $H$ exist. By Lemma \ref{L:p-div-element-in-Sp}, $H/H_i$ must be $p$-divisible. Thus $H_i=H+p^nH$ so $H_i+p^nG=H+p^nG$.

\end{proof}

\begin{remark}
	For future reference we note that the proof of the previous proposition shows that any p.p. formula  $\phi(x,\bar b)$ as in $(*)$ defines a coset of a $\0$-definable group $A\le G$ not depending on the constant $\bar b$ (or indeed, on the terms $t_\alpha$ as in $(*)$). 
\end{remark}

We will first compute the dp-rank of $G$ in the reduct language.

\begin{notation}
Let $G$ be an ordered abelian group with finite spines and $p$ a prime. Denote by $k_p$ the maximal $n$ for which there exist definable convex subgroups $H_0\subsetneq \ldots\subsetneq H_{n-1}\subsetneq H_n=G$ such that for all $i<n$,

\[[H_{i+1}/H_i:p(H_{i+1}/H_i)]=\infty.\]
%where, here, $H_{n}:=G$. 
\end{notation}
\begin{lemma}\label{L:kp-witnesses}
If $G$ has finite spines and $k_p=n$ there are $H_0\subsetneq \ldots\subsetneq H_{n-1}$ witnessing it such that $H_i\in \mathcal S_p$ for all $i$. 
\end{lemma}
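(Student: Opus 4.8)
The statement to prove is Lemma~\ref{L:kp-witnesses}: if $G$ has finite spines and $k_p = n$, then the maximal chain of definable convex subgroups $H_0 \subsetneq \ldots \subsetneq H_{n-1} \subsetneq H_n = G$ realizing successive non-$p$-divisible quotients can be chosen with $H_i \in \mathcal{S}_p$ for all $i$. The plan is to start from an arbitrary witnessing chain $H_0 \subsetneq \ldots \subsetneq H_{n-1} \subsetneq H_n = G$ with each $[H_{i+1}/H_i : p(H_{i+1}/H_i)] = \infty$, and modify each $H_i$ into a member of $\mathcal{S}_p$ (or keep $H_0 = \{0\}$ and $H_n = G$) without destroying the non-$p$-divisibility of the quotients.

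First I would recall that by Proposition~\ref{P:alldefconvin_finspine} every definable convex subgroup of $G$ is of the form $G_\alpha$ for some $\alpha$ in some $\mathcal{S}_m$, and that $\mathcal{S}_p$ is finite. The key tool is Lemma~\ref{L:p-div-element-in-Sp}: a quotient $H'/H''$ of convex subgroups fails to be $p$-divisible exactly when some member of $\mathcal{S}_p$ sits between them (including the endpoints as allowed). So since $[H_{i+1}/H_i : p(H_{i+1}/H_i)]=\infty$ in particular $H_{i+1}/H_i$ is not $p$-divisible, hence there exists some $H' \in \mathcal{S}_p$ with $H_i \subseteq H' \subsetneq H_{i+1}$. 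The idea is to replace $H_i$ (for $1 \le i \le n-1$) by the \emph{largest} member of $\mathcal{S}_p$ that is properly contained in $H_{i+1}$; call it $\widetilde H_i$. Since $\mathcal{S}_p$ is finite this is well-defined, and $H_i \subseteq \widetilde H_i \subsetneq H_{i+1}$. I would do this from the top down: first set $\widetilde H_n = G$, then choose $\widetilde H_{n-1} \in \mathcal{S}_p$ maximal below $\widetilde H_n$ with $H_{n-1} \subseteq \widetilde H_{n-1}$, then $\widetilde H_{n-2} \in \mathcal{S}_p$ maximal below $\widetilde H_{n-1}$ with $H_{n-2} \subseteq \widetilde H_{n-2}$, and so on, finally setting $\widetilde H_0 = \{0\}$.

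It then remains to check two things. First, that the new chain is still strictly increasing: $\widetilde H_{i-1} \subsetneq \widetilde H_i$. This holds because $\widetilde H_{i-1}$ was chosen to lie strictly below $\widetilde H_i$ (for $i \ge 2$), and for $i = 1$ we have $\widetilde H_0 = \{0\}$; one must also confirm $\{0\} \subsetneq \widetilde H_1$, which follows since $H_0 = \{0\} \subsetneq H_1 \subseteq \widetilde H_1$ forces $\widetilde H_1 \ne \{0\}$ — wait, more carefully, $\widetilde H_1$ is chosen with $H_1 \subseteq \widetilde H_1$, and $H_1 \supsetneq \{0\}$ by maximality of $n$, so $\widetilde H_1 \ne \{0\}$. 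Second, and this is the crux, I must verify that each quotient $\widetilde H_{i+1}/\widetilde H_i$ is still non-$p$-divisible. By Lemma~\ref{L:p-div-element-in-Sp} it suffices to exhibit a member of $\mathcal{S}_p$ between $\widetilde H_i$ and $\widetilde H_{i+1}$; but $\widetilde H_i$ itself lies in $\mathcal{S}_p$ (for $1 \le i \le n-1$) and $\widetilde H_i \subsetneq \widetilde H_{i+1}$, so we are done for those indices. For the top quotient $\widetilde H_n / \widetilde H_{n-1} = G/\widetilde H_{n-1}$ we have $\widetilde H_{n-1} \in \mathcal{S}_p$ with $\widetilde H_{n-1} \subsetneq G$, so again Lemma~\ref{L:p-div-element-in-Sp} applies. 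For the bottom quotient $\widetilde H_1/\widetilde H_0 = \widetilde H_1/\{0\}$: we know $\{0\} \subseteq H_0' $... here I should instead argue that since $H_1/H_0 = H_1/\{0\}$ is non-$p$-divisible, there is $H' \in \mathcal{S}_p$ with $\{0\} \subseteq H' \subsetneq H_1 \subseteq \widetilde H_1$, giving the required element of $\mathcal{S}_p$ strictly below $\widetilde H_1$. (If $H' = \{0\} \in \mathcal{S}_p$ this still works since $\{0\} = \widetilde H_0 \subsetneq \widetilde H_1$.)

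\textbf{Main obstacle.} The chain bookkeeping is entirely routine once one has Lemma~\ref{L:p-div-element-in-Sp}; the only subtle point is ensuring the \emph{strictness} of the inclusions survives the replacement — replacing $H_i$ by a larger group $\widetilde H_i$ could in principle collapse $\widetilde H_i$ onto $\widetilde H_{i+1}$. The top-down construction, always choosing $\widetilde H_i$ strictly below the already-fixed $\widetilde H_{i+1}$, is precisely what sidesteps this, and the fact that $H_i \subseteq \widetilde H_i \subsetneq H_{i+1} \subseteq \widetilde H_{i+1}$ keeps the original data sandwiched so that non-$p$-divisibility of the enlarged quotients is automatic. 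One should also note the index counts do not change — the chain still has length $n$ — so maximality of $k_p$ is respected (indeed, the new chain certifies $k_p \ge n$, matching the hypothesis). So no genuine difficulty is expected beyond careful handling of the two endpoint quotients.
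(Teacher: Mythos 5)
Your plan follows the paper's outline — start from a witnessing chain and push its terms into $\mathcal{S}_p$ — but the replacement rule you choose is the wrong one, and the argument for why the witnessing condition survives has a genuine gap. The condition that must be preserved is $[\widetilde H_{i+1}/\widetilde H_i : p(\widetilde H_{i+1}/\widetilde H_i)] = \infty$, but Lemma~\ref{L:p-div-element-in-Sp}, which you invoke at ``the crux,'' only detects non-$p$-divisibility, i.e.\ that this index is $>1$. Silently replacing ``$=\infty$'' by ``not $p$-divisible'' loses exactly the information $k_p$ is counting, and your proof never recovers it.

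This is not a cosmetic omission: taking $\widetilde H_i$ to be the \emph{largest} member of $\mathcal{S}_p$ below $H_{i+1}$ can actually destroy the infinite index. Take $G = \mathbb{Z} \oplus A_1 \oplus B \oplus A_2$ lexicographically, with $A_1, A_2$ archimedean dense and $[A_j : pA_j] = \infty$, and $B$ archimedean dense and $p$-divisible (e.g.\ $B = \mathbb{Z}[1/p]$). By Lemma~\ref{L:finspine_lexi} one gets $\mathcal{S}_p = \{0,\ B\oplus A_2,\ A_1\oplus B\oplus A_2\}$; note $A_2 \notin \mathcal{S}_p$ precisely because $B$ is $p$-divisible. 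Here $k_p = 2$, and $0 \subsetneq A_2 \subsetneq G$ is a witnessing chain (both quotients have infinite $p$-index). Your rule replaces $A_2$ by $\widetilde H_1 = A_1 \oplus B \oplus A_2$, the largest member of $\mathcal{S}_p$ below $G$, but then $[G/\widetilde H_1 : p(G/\widetilde H_1)] = [\mathbb{Z} : p\mathbb{Z}] = p < \infty$, so the new chain no longer witnesses $k_p = 2$.

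The correct choice, and the one the paper makes, is the opposite extreme: replace a single $H_i \notin \mathcal{S}_p$ by the \emph{smallest} $H \in \mathcal{S}_p$ with $H_i \subseteq H \subsetneq H_{i+1}$. Minimality together with Lemma~\ref{L:p-div-element-in-Sp} forces $H/H_i$ to be $p$-divisible. Since quotients of convex subgroups are torsion-free, the exact sequence $0 \to H/H_i \to H_{i+1}/H_i \to H_{i+1}/H \to 0$ stays exact after tensoring with $\mathbb{Z}/p\mathbb{Z}$, giving
\[
[H_{i+1}/H_i : p(H_{i+1}/H_i)] = [H/H_i : p(H/H_i)]\cdot[H_{i+1}/H : p(H_{i+1}/H)],
\]
so the infinite index is inherited entirely by $[H_{i+1}/H : p(H_{i+1}/H)]$, and likewise $[H/H_{i-1} : p(H/H_{i-1})] \geq [H_i/H_{i-1} : p(H_i/H_{i-1})] = \infty$. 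Starting from a witnessing chain that maximizes the number of terms already in $\mathcal{S}_p$ then gives a contradiction if any $H_i$ fails to lie in $\mathcal{S}_p$. In the example above this correctly replaces $A_2$ by $B \oplus A_2$, and both indices remain infinite.
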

\begin{proof}
%Let $k_p'$ be the maximal length of a sequence $H_0\subsetneq \ldots\subsetneq H_{n-1}$ in $\mathcal S_p$ such that 
%\[[H_{i+1}/H_i:p(H_{i+1}/H_i)]=\infty\] for all $i$. 
% Obviously, $k_p'\leq k_p$. For the other direction, 
 Take any sequence $H_0\subsetneq H_1\subsetneq\ldots\subsetneq H_{k_p-1}$ of definable convex subgroups with \[[H_{i+1}/H_i:p(H_{i+1}/H_i)]=\infty\] for every $0\leq i\leq k_p-1$. 
 Choose the $H_i$ so that a maximal number among them is in $\mathcal S_p$. Assume towards a contradiction that there is some $H_i\notin \mathcal S_p$. By Lemma \ref{L:p-div-element-in-Sp} there exists $H\in\mathcal{S}_p$ with $H_i\subseteq H\subsetneq H_{i+1}$. Take such a subgroup $H$ which is minimal possible (such a minimal subgroup exists because $\mathcal S_p$ is finite). Then (excatly) one of  $[H_{i+1}/H:p(H_{i+1}/H)]=\infty$ or $[H/H_i:p(H/H_i)]=\infty$.  By minimality of $H$ it must be that $H_i/H$ is $p$-divisible (otherwise, apply Lemma \ref{L:p-div-element-in-Sp} with $H_i$ and $H$), so $[H_{i+1}/H:p(H_{i+1}/H)]=\infty$. Replacing $H_i$ with $H$ we get a contradiction to the choice of the sequence $H_0,\dots, H_{k_p-1}$. 
\end{proof}

\begin{remark}
The above lemma gives a simple way of computing $k_p$. Writing $\mathcal{S}_p=\{H_0,\dots,H_{n-1}\}$ and denoting
\[\mathcal{S}_p^\infty=\{H_i\in\mathcal{S}_p: [H_{i+1}/H_i:p(H_{i+1}/H_i)]=\infty\},\] with $H_n=G$, the previous lemma shows that $|\mathcal{S}_p^{\infty}|=k_p$.
\end{remark}

Since $\mathcal{S}_p^\infty\subseteq \mathcal{S}_p$ we immediately have: 
\begin{lemma}\label{L:kp is finite}
If $G$ is an ordered abelian group with finite spines and $p$ a prime, then $k_p$ is finite.
\end{lemma}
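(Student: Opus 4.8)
The plan is to observe that $k_p$ is bounded by a quantity that is manifestly finite once $G$ has finite spines. The key point is that every subgroup $H_i$ appearing in a chain witnessing $k_p$ — after applying Lemma \ref{L:kp-witnesses} — may be taken to lie in $\mathcal{S}_p$, and $\mathcal{S}_p$ is finite by the definition of finite spines. So I would first invoke Lemma \ref{L:kp-witnesses}: if $k_p = n$, there is a chain $H_0 \subsetneq \ldots \subsetneq H_{n-1}$ witnessing it with each $H_i \in \mathcal{S}_p$. Since these are $n$ distinct elements of the finite set $\mathcal{S}_p$, we get $n \leq |\mathcal{S}_p| < \aleph_0$, hence $k_p$ is finite.

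More precisely, the cleanest route is to use the remark immediately preceding the statement, which identifies $k_p$ with $|\mathcal{S}_p^\infty|$ where $\mathcal{S}_p^\infty \subseteq \mathcal{S}_p$. Then the containment $\mathcal{S}_p^\infty \subseteq \mathcal{S}_p$ together with $|\mathcal{S}_p| < \aleph_0$ (which is exactly what "finite spines" means) gives $k_p = |\mathcal{S}_p^\infty| \leq |\mathcal{S}_p| < \aleph_0$ in one line. The only thing to verify is that $k_p$ is well-defined, i.e. that the supremum defining it is actually attained and finite — but this is precisely what the bound via $\mathcal{S}_p$ provides, since any chain of definable convex subgroups with the stated index property can be pushed (via Lemma \ref{L:p-div-element-in-Sp} and the minimality argument already carried out in Lemma \ref{L:kp-witnesses}) into $\mathcal{S}_p$ without decreasing its length.

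There is essentially no obstacle here: this lemma is a one-line corollary of Lemma \ref{L:kp-witnesses} (or of the remark following it), recorded separately only because it will be convenient to cite. I would write the proof as a single sentence: by Lemma \ref{L:kp-witnesses} the value $k_p$ is witnessed by a chain inside $\mathcal{S}_p$, and since $\mathcal{S}_p$ is finite, so is $k_p$ — equivalently, $k_p = |\mathcal{S}_p^\infty| \leq |\mathcal{S}_p|$.
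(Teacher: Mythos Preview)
Your proposal is correct and is exactly the paper's argument: the lemma is recorded as an immediate consequence of the inclusion $\mathcal{S}_p^\infty \subseteq \mathcal{S}_p$ (equivalently, of Lemma \ref{L:kp-witnesses}), giving $k_p = |\mathcal{S}_p^\infty| \le |\mathcal{S}_p| < \infty$.
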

%\begin{proof}
%Since $k_p$ is determined by the theory of $G$, as in the proof of Proposition \ref{P:definable-groups}, we may assume that $G=\bigoplus_{i\in I} G_i$, where all the $G_i$ are non zero archimedean ordered abelian groups and if $\{H_i\}_{i<\kappa}$ are the definable subgroups then for each $i<\kappa$ there exists $i^-\in I$ such that
%\[H_i=\bigoplus_{j\leq i^-} 0\oplus\bigoplus_{j>i^-} G_j.\]
%
%By Lemma \ref{L:finspine_lexi}, $|\{i\in I:[G_i:pG_i]>0\}|<\infty$, and thus  for every $i$, $[H_{i+1}/H_i:p(H_{i+1}/H_i)]=\infty$ implies that there exists \[(i+1)^-<j\leq i^-\] such that $[G_j:pG_j]=\infty$. As a result,\[k_p\leq |\{i\in I:[G_i:pG_i]>0\}|<\infty.\]
%\end{proof}

%In fact, the proof actually shows the following example:
%
%\begin{example}
%Assume $G=\bigoplus_{i\in I}G_i$ where the $G_i$ are nonzero archimedean. If $G$ has finite spines, by Lemma \ref{L:finspine_lexi} $G_i$ is not $p$-divisible for finitely many $i\in I$, thus $k_p=\#\{i\in I: [G_i:pG_i]=\infty\}$.
%\end{example}

In the following proposition we study subgroups of the form $H_i+p^{e_i}G$ and of the form $H_i$. In order to avoid dividing into cases, we will allow $e_i=\infty$ with the convention that $H_i+p_i^{\infty}G=H_i$.
%, i.e. we use the unorthodox convention that $p_i^{\infty}=0$. The reason for this will be come apparent later on.

\begin{lemma}\label{L:inp-patter-for-grps-polyregular}
Let $G$ be an ordered abelian group with finite spines, $p$ a prime number, $H_0\subsetneq H_1\subsetneq\dots \subsetneq H_{n-1}$ definable convex subgroups and $e_0<\dots<e_{n-1}$, where $e_{n-1}$ may be $\infty$. If $e_{n-1}\neq\infty$ then for every $r<n$,
\[\left[\bigcap_{r\neq i<n}(H_i+p^{e_i}G):\bigcap_{i<n} (H_i+p^{e_i}G)\right]=\infty\Longleftrightarrow\]
 \[[H_{r+1}/H_r:p(H_{r+1}/H_r)]=\infty.\]
% where, here, $H_n:=G$. 

If $e_{n-1}=\infty$ then it is always true that
\[\left[\bigcap_{i<n-1}(H_i+p^{e_i}G):\bigcap_{i<n} (H_i+p^{e_i}G)\right]=\infty,\]
and for every $r<n-1$,
\[\left[\bigcap_{r\neq i<n}(H_i+p^{e_i}G):\bigcap_{i<n} (H_i+p^{e_i}G)\right]=\infty\Longleftrightarrow\]
 \[[H_{r+1}/H_r:p(H_{r+1}/H_r)]=\infty.\]

\end{lemma}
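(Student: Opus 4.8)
The plan is to reduce everything to a computation of indices of the form $[\bigcap_i (H_i + p^{e_i}G) : \bigcap_i (H_i + p^{e_i}G) \cap (H_r + p^{e_r}G)]$, exploiting that in a lexicographic sum of archimedean groups (which we may assume by Corollary \ref{C:elem. equiv. arhcim grps}, since finiteness of $\mathcal{S}_p$ and all the relevant indices are first-order) the subgroups $H_i + p^{e_i}G$ have a transparent description: by Lemma \ref{L:finspine_lexi}(1) each $H_i$ is $\bigoplus_{j > \alpha_i} G_j$ for some $\alpha_i \in I$, and $H_i + p^{e_i}G = \bigoplus_{j \le \alpha_i} p^{e_i}G_j \oplus \bigoplus_{j > \alpha_i} G_j$ after sorting coordinates. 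First I would use Lemma \ref{L:intersection of groups}(1)--(2) repeatedly to compute the finite intersections in closed form. Because $e_0 < \dots < e_{n-1}$ and $H_0 \subsetneq \dots \subsetneq H_{n-1}$, the intersection $\bigcap_{i<n}(H_i + p^{e_i}G)$ telescopes to $p^{e_0}H_0 + p^{e_1}(H_1 \cap \dots) + \dots$; the key point is that on each coordinate block the "governing" exponent is the smallest $e_i$ whose $H_i$ still contains that block, so the intersection is $\sum_{i} p^{e_i}(H_{i+1} \cap (\text{something}))$ — concretely, it equals $p^{e_0}H_0^c \oplus p^{e_1}(H_1\setminus H_0)^c \oplus \cdots$ in the block decomposition, with the top block governed by $p^{e_{n-1}}$ (or by no constraint, when $e_{n-1} = \infty$).

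The second step is the index computation. Removing the constraint indexed $r$ from the intersection changes the governing exponent only on the "annulus" $H_{r+1} \setminus H_r$ (more precisely the block of indices $j$ with $\alpha_r < j \le \alpha_{r+1}$): there the exponent drops from $p^{e_r}$ to $p^{e_{r+1}}$ (or disappears, in the $e_{n-1}=\infty$ top-block case). Hence the index in question equals $[p^{e_{r+1}}(H_{r+1}/H_r) : p^{e_r}(H_{r+1}/H_r)]$ — or, when $r = n-1$ and $e_{n-1} = \infty$, it equals $[H_n/H_{n-1} : p^{e_{n-1}}(H_n/H_{n-1})] = [G/H_{n-1} : p^{e_{n-1}}(G/H_{n-1})]$, which is infinite whenever $e_{n-1} < \infty$, giving the "always true" clause (note the statement there has $e_{n-1}=\infty$, so the top index is governed by whether $G/H_{n-1}$ beats $p^{e_{n-2}}$; I'd handle this by the same annulus bookkeeping). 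Now $[p^{a}A : p^{b}A] = [A : p^{b-a}A]$ for $a \le b$ in any abelian group $A$ (dividing $p^a A$ by $p^b A$ is the same as dividing $A$ by $p^{b-a}A$, since multiplication by $p^a$ is an isomorphism onto $p^aA$ on the torsion-free quotient), and $[A : p^c A] = \infty$ iff $[A : pA] = \infty$ for $c \ge 1$. Applying this with $A = H_{r+1}/H_r$ and $c = e_{r+1} - e_r \ge 1$ yields exactly the equivalence with $[H_{r+1}/H_r : p(H_{r+1}/H_r)] = \infty$.

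For the $e_{n-1} = \infty$ regime the same analysis applies verbatim with $H_n + p^{\infty}G := H_n = G$ read as "no constraint on the top block"; removing constraint $r$ for $r < n-1$ is unaffected by the top block and the computation is identical, while the displayed "always true" index $[\bigcap_{i<n-1}(H_i + p^{e_i}G) : \bigcap_{i<n}(H_i+p^{e_i}G)]$ compares having no constraint vs. the constraint $H_{n-1} + p^{e_{n-1}}G$ on the block $H_n \setminus H_{n-1}$, i.e. it is $[H_n/H_{n-1} : p^{e_{n-1}}(H_n/H_{n-1})]$; but wait — here $e_{n-1}=\infty$ so $H_{n-1}+p^{e_{n-1}}G = H_{n-1}$ and this index is $[G/H_{n-1} : \{0\}] = |G/H_{n-1}|$, which is infinite because $H_{n-1} \subsetneq H_n = G$ is a proper convex subgroup of an infinite group. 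The main obstacle I anticipate is the bookkeeping in Step 1: correctly identifying, block by block, which exponent governs each coordinate in a multi-fold intersection, and making sure the telescoping is justified by Lemma \ref{L:intersection of groups}(1)--(3) rather than hand-waved; once that closed form is pinned down, Steps 2 and 3 are routine index arithmetic. I would therefore write Step 1 carefully using Lemma \ref{L:intersection of groups}(3) (which is exactly the telescoping identity for a nested chain of convex subgroups with divisibility-ordered coefficients) after first reducing the $e_i$ to a divisibility chain as in the proof of Proposition \ref{P:definable-groups}.
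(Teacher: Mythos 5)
Your proof takes essentially the same route as the paper's: reduce (via Corollary \ref{C:elem. equiv. arhcim grps}) to $G = \bigoplus_{i\in I} G_i$, telescope $\bigcap_{i<n}(H_i + p^{e_i}G)$ to $H_0 + p^{e_0}H_1 + \cdots + p^{e_{n-2}}H_{n-1} + p^{e_{n-1}}G$ using Lemma \ref{L:intersection of groups}, and localize the effect of dropping the $r$-th constraint to the block corresponding to $H_{r+1}/H_r$, where the index reduces to $[A:p^cA]$ for a torsion-free $A$ and $c\ge 1$. One slip: the governing exponent on that block relaxes from $p^{e_r}$ to $p^{e_{r-1}}$ (not $p^{e_{r+1}}$ as you wrote), so the index in question is $\left[p^{e_{r-1}}(H_{r+1}/H_r):p^{e_r}(H_{r+1}/H_r)\right]$; similarly the block is $\{j:\alpha_{r+1}<j\le\alpha_r\}$, since $H_r\subsetneq H_{r+1}$ forces $\alpha_{r+1}<\alpha_r$ in your convention. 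These are notational and do not affect the conclusion, since $e_r-e_{r-1}\ge 1$ suffices for your final step.
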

\begin{proof}
As the lemma is elementary and involves no parameters, we may assume that $G=\bigoplus_{i\in I} G_i$, where all the $G_i$ are non zero archimedean ordered abelian groups. For every $i<n$ there exists $i^-\in I$ such that
\[H_i=\bigoplus_{j\leq i^-} 0\oplus\bigoplus_{j>i^-} G_j.\]

By Lemma \ref{L:intersection of groups}(3), $\bigcap_{i<n} (H_i+p^{e_i}G)$ is equal to 
\[H_0+p^{e_0}H_1+p^{e_1}H_2+\dots+p^{e_{n-2}}H_{n-1}+p^{e_{n-1}}G=\]
\[p^{e_{n-1}}\cdot \left( \bigoplus_{j\leq (n-1)^-}G_j  \right)\oplus\dots \oplus p^{e_0}\cdot \left(\bigoplus_{1^-<j\leq 0^-}G_j\right)\oplus \bigoplus_{j> 0^-}G_j.\]
Likewise, $\bigcap_{r\neq i<n} (H_i+p^{e_i}G)$ is equal to
\[H_0+p^{e_0}H_1+\dots+p^{e_{r-1}}H_{r+1}+p^{e_{r+1}}H_{r+2}+\dots+p^{e_{n-2}}H_{n-1}+p^{e_{n-1}}G.\]
Thus, $\left[\bigcap_{r\neq i<n}(H_i+p^{e_i}G):\bigcap_{i<n} (H_i+p^{e_i}G)\right]=\infty$ is equivalent to 
\[\left[ p^{e_{r-1}}\left(\bigoplus_{(r+1)^-<j\leq r^-} G_j\right):p^{e_r}\left(\bigoplus_{(r+1)^-<j\leq r^-} G_j\right)\right]=\infty,\]
which is equivalent (if $e_r\neq\infty$), since $G$ is torsion-free, to 
\[[H_{r+1}/H_r:p(H_{r+1}/H_r)]=\infty.\]
The case where $e_{n-1}=\infty$ is obvious since for all $i$ we have that $[H_{i+1}:H_i]=\infty$.

\end{proof}

Before computing the dp-rank we combine the above results to obtain:
\begin{proposition}\label{P:conc-on inp-patterns}
Let $G$ be an ordered abelian group with finite spines, $\{H_i\}_{i<\kappa\leq \omega}$ a collection of definable convex subgroups with $H_i\subsetneq H_j$ if $i<j$ and $\{e_i\}_{i<\kappa}\subseteq \mathbb{N}\cup\{0,\infty\}$. Assume that for every $i_0<\kappa$
\[\left[ \bigcap_{i_0\neq i<\kappa}(H_i +p^{e_i}G):\bigcap_{i<\kappa}(H_i+p^{e_i}G)\right]=\infty.\] Then
\begin{enumerate}
\item $e_{i}\neq 0$ for every $i<\kappa$, and if $e_{i_0}=\infty$ then $i_0$ is maximal in $\kappa$.
\item $i<j<\kappa$ if and only if $e_i<e_j$.
\item $\kappa\leq k_p+1$, and is, therefore, finite.
\end{enumerate}
\end{proposition}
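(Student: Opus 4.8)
The plan is to derive each of the three conclusions from Lemma~\ref{L:inp-patter-for-grps-polyregular} together with the monotonicity and finiteness facts established above. Throughout, I work in a lexicographic sum $G=\bigoplus_{i\in I}G_i$ of non-zero archimedean groups (legitimate by Corollary~\ref{C:elem. equiv. arhcim grps}, since the statement is elementary and parameter-free), and I write $H_i=\bigoplus_{j\le i^-}0\oplus\bigoplus_{j>i^-}G_j$ for the appropriate $i^-\in I$.

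\textbf{Step 1: reducing to the lemma's hypotheses.} First I would observe that the displayed inp-condition says precisely that every ``coordinate'' $H_{i_0}+p^{e_{i_0}}G$ genuinely cuts down the intersection by an infinite factor; dropping it must cost infinitely much. The key computation, exactly as in Lemma~\ref{L:inp-patter-for-grps-polyregular}, is that $\bigcap_{i<\kappa}(H_i+p^{e_i}G)$ equals $H_0+p^{e_0}H_1+\dots+p^{e_{\kappa-2}}H_{\kappa-1}+p^{e_{\kappa-1}}G$ (by Lemma~\ref{L:intersection of groups}(3) after replacing the $e_i$ by a $\gcd$-reduced, strictly increasing sequence as in the proof of Proposition~\ref{P:definable-groups}; note the $\gcd$-reduction is exactly what forces the exponents to be comparable, which is the content of (2)), and similarly for the sub-intersection omitting index $i_0$. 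Comparing the two, the index is infinite iff the single ``block'' $p^{e_{i_0-1}}(\bigoplus_{(i_0+1)^-<j\le i_0^-}G_j)$ has infinite index over $p^{e_{i_0}}(\cdots)$ of the same block, i.e. iff $[H_{i_0+1}/H_{i_0}:p(H_{i_0+1}/H_{i_0})]=\infty$ (when $e_{i_0}\ne\infty$).

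\textbf{Step 2: proving (1).} If some $e_{i_0}=0$ then $H_{i_0}+p^{e_{i_0}}G=G$, which contributes nothing to any intersection, so omitting it changes nothing and the index is $1$, not $\infty$ --- contradiction. If $e_{i_0}=\infty$ for some non-maximal $i_0$, then since the $H_i$ are strictly increasing and $e_{i_0}<e_j$ would be violated, I instead argue directly: having two ``$\infty$'' exponents, or an $\infty$ below a finite one, makes the corresponding blocks collapse in a way that kills the infinite-index requirement for an adjacent coordinate; more cleanly, the second half of Lemma~\ref{L:inp-patter-for-grps-polyregular} is stated only for the maximal coordinate carrying $e=\infty$, and if a non-maximal $i_0$ had $e_{i_0}=\infty$ one checks (again by the block computation) that omitting the \emph{maximal} coordinate leaves the intersection unchanged, contradicting the hypothesis at that index.

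\textbf{Step 3: proving (2) and (3).} For (2): after the $\gcd$-reduction forced by the hypothesis (distinct, pairwise-comparable exponents ordered by divisibility, hence by size), the enumeration of the $H_i$ by inclusion forces $i<j \Leftrightarrow e_i<e_j$; any other ordering would make some $H_i+p^{e_i}G$ contain another, again trivializing an omission. For (3): by Step 1, for each $i_0<\kappa$ with $e_{i_0}\ne\infty$ we get $[H_{i_0+1}/H_{i_0}:p(H_{i_0+1}/H_{i_0})]=\infty$; together with $H_{\kappa-1}\subsetneq G$ this exhibits a strictly increasing chain $H_0\subsetneq\dots\subsetneq H_{\kappa-1}\subsetneq H_\kappa=G$ of length $\kappa$ all of whose quotients have infinite $p$-index, so $\kappa\le k_p$; allowing the top exponent $e_{\kappa-1}=\infty$ (which need not give an infinite-index quotient at the top, only at lower steps) gives at most one extra, so $\kappa\le k_p+1$, which is finite by Lemma~\ref{L:kp is finite}.

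\textbf{Main obstacle.} I expect the delicate point to be bookkeeping the $\gcd$-reduction and the $e=\infty$ convention simultaneously: one must be careful that replacing the original exponents $\{e_i\}$ by a divisibility chain preserves both the subgroups $\bigcap(H_i+p^{e_i}G)$ and the truth of the hypothesis for \emph{every} omitted index, and that the edge case where the top exponent is $\infty$ is handled by the corresponding clause of Lemma~\ref{L:inp-patter-for-grps-polyregular} rather than by the finite-exponent clause. Everything else is a direct unwinding of the block decomposition already used in the proof of that lemma.
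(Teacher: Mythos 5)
Your overall roadmap (reduce the index conditions to statements about successive quotients via Lemma~\ref{L:inp-patter-for-grps-polyregular}, then read off (1)--(3)) is the right one, and the parenthetical remark ``any other ordering would make some $H_i+p^{e_i}G$ contain another, trivializing an omission'' is exactly the paper's clean argument for (1) and (2): if $i<j$ but $e_i\ge e_j$ then $H_i+p^{e_i}G\subseteq H_j+p^{e_j}G$, so omitting index $j$ does not shrink the intersection and the index is $1$, contradicting the hypothesis. That observation alone disposes of (1) and (2); no $\gcd$-reduction is needed, and the paper does none.

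The $\gcd$-reduction you build Step~1 around is in fact the gap you flagged in your ``main obstacle'' paragraph, and it is a real one. Replacing the $e_i$ by a divisibility chain is what you need \emph{in order to} apply Lemma~\ref{L:intersection of groups}(3) and Lemma~\ref{L:inp-patter-for-grps-polyregular}, but those lemmas are the tools you then use to justify the replacement; the reasoning is circular. Moreover, changing the $e_i$ changes the individual subgroups $H_i+p^{e_i}G$, and hence each sub-intersection $\bigcap_{i\neq i_0}(H_i+p^{e_i}G)$, so it is not clear the hypothesis survives the reduction (this is exactly your worry, and you do not resolve it). The correct order of events is the reverse: first prove (2) directly by the containment argument, \emph{then} the family is already gcd-reduced (since the $p^{e_i}$ form a divisibility chain automatically) and Lemma~\ref{L:inp-patter-for-grps-polyregular} applies without any massaging.

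There is a second gap in (3). You apply Lemma~\ref{L:inp-patter-for-grps-polyregular} to the full family $\{H_i\}_{i<\kappa}$ and write the intersection as a finite sum ending in $p^{e_{\kappa-1}}G$, but $\kappa$ may equal $\omega$ (the statement allows $\kappa\le\omega$), in which case neither the lemma nor that finite-sum computation is available; indeed, finiteness of $\kappa$ is itself part of what (3) asserts. The paper sidesteps this with a small but essential observation: for subgroups $H,K\le G$ one always has $[K:K\cap H]\le[G:H]$, so the ``omit-one-index-gives-infinite-index'' hypothesis passes to every finite sub-family. One then picks a sub-family of size $k_p+2$ (if $\kappa>k_p+1$) and applies Lemma~\ref{L:inp-patter-for-grps-polyregular} to \emph{it}, obtaining a chain of convex subgroups that would witness more than $k_p$ consecutive infinite-$p$-index quotients, a contradiction. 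You should replace your direct application of the lemma with this restriction-to-finite-sub-families step.
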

\begin{proof}
\begin{enumerate}
\item If $e_{i_0}=0$ then $H_{i_0}+p^{e_{i_0}}G=G$ and thus
 \[\left[ \bigcap_{i_0\neq i<\kappa}(H_i +p^{e_i}G):\bigcap_{i<\kappa}(H_i+p^{e_i}G)\right]=1.\]
If $e_{i_0}=\infty$ then $H_{i_0}+p^{e_{i_0}}G=H_i$ and note that $i_0$ must be maximal in $\kappa$. For otherwise, if $i_0<i_1$ then $H_{i_0}\subsetneq H_{i_1}+p^{e_{i_1}}G$ and thus 
 \[\left[ \bigcap_{i_1\neq i<\kappa}(H_i +p^{e_i}G):\bigcap_{i<\kappa}(H_i+p^{e_i}G)\right]=1.\]
\item Assume $i<j<\kappa$, and by the above we may assume that $e_j\neq \infty$. If $e_j\leq e_i$ then, since $H_i\subsetneq H_j$, $H_i+p^{e_i}G\subsetneq H_j+p^{e_j}G$ leading to that same contradiction as above. Since both the index set and the set of $e_i$ are ordered, this also proves the other implication. 
%On the other hand if $e_i<e_j$ for some $j\leq i<\kappa$ then $H_j+p^{e_j}G\subsetneq H_i+p^{e_i}G$ which, also, gives a contradiction.
\item For any group, $G$, if $H,K\le G$ are subgroups then $[K:K\cap H]\le [G:H]$. So any (finite) sub-family of the $H_i$ will also satisfy the assumptions of the lemma (with the associated $e_i$). So assume towards a contradiction that $\kappa>k_p+1$ and fix a sub-family of size $k_p+2$ of the $H_i$. Applying Lemma \ref{L:inp-patter-for-grps-polyregular} to this sub-family, we see that this sub-family witnesses $k_p>k_p+1$, which is absurd. 

%This follows from Lemmas \ref{L:kp is finite} and \ref{L:inp-patter-for-grps-polyregular}. The reason for $k_p+1$ is that the maximal $e_i$ may be $\infty$, and then the index is always $\infty$ by Lemma \ref{L:inp-patter-for-grps-polyregular}.
\end{enumerate}
\end{proof}

\begin{notation} \begin{enumerate}
		\item 	For an abelian group $G$, let \[\mathbb{P}_{\infty}(G)=\{p\text{ prime}:[G:pG]=\infty\}.\]
		\item For an ordered abelian group $G$ with finite spines and prime $p\in\mathbb{P}_\infty(G)$ let $H_{p}$ denote the maximal element in $\mathcal{S}_p^\infty$.
		If there exists a definable convex subgroup strictly containing all the $\{H_{p}\}_{p\in \mathbb{P}_\infty(G)} $ then set $c_G=1$, otherwise set $c_G=0$.
	\end{enumerate}

\end{notation}

The need for introducing the error-term $c_G$ is illustrated in the following example:
\begin{example}
 Let $A_1,A_2$ be archimedean ordered abelian groups such that $A_1$ is $p$-divisible for every prime $p\neq 2$, $[A_1:2A_1]=\infty$, and $A_2$ $p$-divisible for every prime $p\neq 2,3$ and $[A_2:2A_2]=[A_2:3A_2]=\infty$. Consider $G_1=\mathbb{Q}\oplus A_1\oplus A_2$ and $G_2=\mathbb{Z}\oplus A_1\oplus A_2$. 
 
 The convex subgroups of $G_1,G_2$ are $0$, $A_2$, $A_1\oplus A_2$ (as direct summands) and $G_i$. Among those $0$ and $A_2$ are definable in both (as $H_2(g)$ for $g\in 0\oplus H_1\oplus 0$ with $g\notin 2G$). For similar reasons $H_1\oplus H_2$ is in $\mathcal S_2(G_2)$ but not in $\mathcal S_2(G_1)$. It follows that $\mathcal S_2^\infty(G_i)=\{0,H_2\}$ (because in $G_2$ we have that $[G_2/(H_1\oplus H_2): 2(G/(H_1\oplus H_2))=2$). 
 
 Similar arguments show that $S_3=\{0\}$ in both groups. Thus, $k_2=2$ and $k_3=1$ in both groups. It follows from Proposition \ref{P:alldefconvin_finspine} that $H_1\oplus H_2$ is not definable in $G_1$. So $c_{G_2}=1$ whereas $c_{G_1}=0$. We will see in the next proposition that, despite of the fact that $k_p$ is equal in both groups for all $p$, in the reduct language,  $\dprk(G_1)=3$ whereas $\dprk(G_2)=4$.

% for  Those are also definable in $G_2$, where also $H_1\oplus H_2$ is definable. The fact that $H_1\oplus H_2$ is not definable in $G_1$ follows from quantifier elimination and is left as an exercise. 
% 
% It follows that $\mathcal S_2$ in both groups is $\{0,H_2\}$ and $\mathcal S_3=\{0\}$. 
% 
%Note that for both $G_1$ and $G_2$ the following chain witnesses that $k_2=2$:
%\[\{0\}\oplus\{0\}\oplus\{0\}\subsetneq \{0\}\oplus \{0\}\oplus H_2,\]
%and the following chain witnesses that $k_3=1:$
%\[\{0\}\oplus \{0\}\oplus \{0\}.\]
%Furthermore, $c_{G_1}=0$ but $c_{G_2}=1$, because $\{0\}\oplus H_1\oplus H_2$ is definable convex subgroup of $G_2$ but not of $G_1$. We will show in the following proposition that $\dprk(G_1)=3$ and $\dprk(G_2)=4$.
\end{example}

%We can finally compute the dp-rank of ordered abelian groups with finite spines in the reduct language: 

\begin{proposition}\label{P:dp-rank-reduct}
Let $G$ be an ordered abelian group with finite spines, considered in the reduct language. Then $\dprk(G)$ is equal to 
\[\begin{cases}
c_G+\sum_{p\in \mathbb{P}_{\infty}(G)}k_p  & \text{if $\mathbb{P}_{\infty}(G)\neq \emptyset$} \\
1 & \text{otherwise.}
\tag{$\star$} 
\end{cases}\]
\end{proposition}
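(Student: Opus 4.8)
The plan is to use the characterization of dp-rank for $1$-based groups (Fact~\ref{F:dp-1bsd}), together with the description of definable subgroups obtained in Proposition~\ref{P:definable-groups}: every definable set in $L_{reduct}$ is a boolean combination of cosets of groups of the form $H_j$ or $H_i+p^nG$ with $H_i\in\mathcal S_p$. By the remark following Fact~\ref{F:dp-1bsd}, the dp-rank equals the largest $\kappa$ for which there is a family $(A_\alpha)_{\alpha<\kappa}$ \emph{drawn from this list of groups} such that $[\bigcap_{\alpha\neq\alpha_0}A_\alpha : \bigcap_\alpha A_\alpha]=\infty$ for every $\alpha_0$. So the whole proof is a combinatorial optimization: among all such families, maximize the size.

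First I would reduce to families of a single prime. If $p\neq q$ and we have groups of the form $H+p^nG$ and $K+q^mG$ in our family, one checks (using Lemma~\ref{L:intersection of groups}(4) and the fact that $\mathbb P_\infty$-obstructions at different primes are ``independent'') that the $p$-part and $q$-part of an inp-pattern cannot interact, i.e. the maximal family splits as a disjoint union over primes $p\in\mathbb P_\infty(G)$ of families involving only $H_i+p^{e_i}G$ (with $e_i\in\mathbb N\cup\{0,\infty\}$, the value $\infty$ encoding the subgroups $H_i$ themselves). Then, for a fixed prime $p$, Proposition~\ref{P:conc-on inp-patterns} tells us exactly what such a family looks like: the $H_i$ are linearly ordered, the exponents $e_i$ are strictly increasing and nonzero, at most the top one is $\infty$, and (by Lemma~\ref{L:inp-patter-for-grps-polyregular}) the condition that the family witnesses an inp-pattern forces each consecutive quotient $H_{i+1}/H_i$ (for $i$ in the relevant range) to have $[H_{i+1}/H_i:p(H_{i+1}/H_i)]=\infty$. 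By the remark after Lemma~\ref{L:kp-witnesses}, the number of such quotients is exactly $k_p=|\mathcal S_p^\infty|$.

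Now count. For a fixed $p\in\mathbb P_\infty(G)$: if none of the $e_i$ is $\infty$, Lemma~\ref{L:inp-patter-for-grps-polyregular} gives $n-1$ genuine ``$p$-divisibility failures'' among a chain $H_0\subsetneq\cdots\subsetneq H_{n-1}$ (the index $r=n-1$ contributes $[H_n/H_{n-1}:p\cdot]=[G/H_{n-1}:p\cdot]=\infty$ automatically since $p\in\mathbb P_\infty$ only if $H_{n-1}$ is ``small enough''), so such a family has size at most $k_p$; if $e_{n-1}=\infty$, the extra top index is free and the family can have size $k_p+1$. The point of $c_G$: the ``$+1$'' with $e_{n-1}=\infty$ means we throw in a genuine convex subgroup $H$ at the top, strictly above all the $H_i$'s used for every prime. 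Such an $H$ can be shared across all primes $p\in\mathbb P_\infty(G)$ simultaneously (it contributes one index to the combined pattern, not one per prime), and Lemma~\ref{L:intersection of groups}(1),(2) shows the indices multiply correctly so this single extra group does witness the required unboundedness across the whole family. A definable convex subgroup strictly containing every $H_p$ ($p\in\mathbb P_\infty(G)$) exists precisely when $c_G=1$; when $c_G=0$ no such common top exists, so we get $\sum_p k_p$, and when $c_G=1$ we get $\sum_p k_p+1$. I would prove both inequalities: (a) the displayed family (take the chain $\mathcal S_p^\infty$ for each $p$, with exponents an increasing sequence of $p$-powers, plus the common top if $c_G=1$) genuinely satisfies the hypothesis of Fact~\ref{F:dp-1bsd}, using Lemma~\ref{L:inp-patter-for-grps-polyregular} prime-by-prime and Lemma~\ref{L:intersection of groups} to handle cross-terms; (b) no larger family can work, by the splitting argument plus Proposition~\ref{P:conc-on inp-patterns}(3) applied at each prime and an argument that at most one ``$\infty$-slot'' can be used globally. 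Finally, if $\mathbb P_\infty(G)=\emptyset$ then every $H_i+p^nG$ has finite index, so the only way to get $[\bigcap_{\alpha\neq\alpha_0}A_\alpha:\bigcap A_\alpha]=\infty$ is with a single $A_{\alpha_0}$ of infinite index (e.g. a proper convex subgroup, or the trivial pattern), giving dp-rank $1$ as long as $G$ is infinite.

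The main obstacle I expect is part (b), and specifically the bookkeeping that shows that the ``$+1$'' coming from an $\infty$-exponent cannot be collected once per prime but only once globally — i.e. that a family mixing $\infty$-slots at two different primes collapses. This needs Lemma~\ref{L:intersection of groups}(4) to disentangle $H_i+p^{e_i}G$ and $H_j+q^{e_j}G$ and the observation that a convex subgroup $H_i$ appearing with exponent $\infty$ behaves, for the index computation, like $H_i+p^\infty G$ for \emph{every} $p$ at once, so it can only be counted in one of the per-prime sub-families; the cleanest route is to fix the common definable convex subgroup witnessing $c_G=1$, quotient by it (or rather, work in the chain of convex subgroups it sits in), and reduce to the case where each per-prime sub-chain lies strictly below it, at which point Proposition~\ref{P:conc-on inp-patterns}(3) caps each sub-chain at $k_p$.
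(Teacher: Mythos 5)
Your proposal follows essentially the same route as the paper: it uses the same characterization of inp-patterns in $1$-based groups (Fact~\ref{F:dp-1bsd}) together with the description of definable subgroups (Proposition~\ref{P:definable-groups}), splits the witnessing family of subgroups by prime, bounds each $p$-part via Proposition~\ref{P:conc-on inp-patterns} and Lemma~\ref{L:inp-patter-for-grps-polyregular}, and realizes the lower bound by taking the chain $\mathcal S_p^\infty$ with increasing $p$-power exponents for each $p\in\mathbb P_\infty(G)$, combined via the Chinese remainder theorem, plus the extra convex subgroup when $c_G=1$. The one point you flag as the main obstacle --- that the $\infty$-exponent slot can only be used once globally, not once per prime --- is handled in the paper more directly than your proposed quotient argument: if two distinct convex subgroups $H_\alpha$ and $H_\beta$ both appeared as $\infty$-slots, then (since definable convex subgroups are linearly ordered by inclusion) one would contain the other, say $H_\alpha\subsetneq H_\beta$, and then omitting $H_\beta$ from the intersection leaves a subgroup of $H_\alpha\subsetneq H_\beta$, so the relevant index is $1$ rather than infinite, contradicting the inp-condition. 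This is shorter than working in the chain above the common top, but the idea is the same and your sketch captures the correct structure.
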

\begin{proof}
Let $\{H_\alpha\}_{\alpha<\gamma\leq \omega}$ be the definable convex subgroups of $G$. If $\dprk(G)=\kappa$, then by Fact \ref{F:dp-1bsd} we may find definable subgroups $\{N_\beta\}_{\beta<\kappa}$, and an indiscernible array $(b_s^\beta)_{s<\omega,\beta<\kappa}$ such that \[\{x\in b_s^\beta +N_\beta\}_{s<\omega,\beta<\kappa}\]
is an inp-pattern of depth $\kappa$.
Furthermore, by the remark following Fact \ref{F:dp-1bsd} and Proposition \ref{P:definable-groups}, we may assume that the $N_\beta$ appearing in such an inp-pattern are of the form $H_i+p^{e_i}G$, where possibly $e_i=\infty$ (recall the convention that $H_i+p^\infty G=H_i$). So we fix once and for all such an inp-pattern of maximal depth. Call a prime $p$ meaningful for $H$ if some $H+p^{e_i}G$ appears in our fixed inp-pattern (with $e_i<\infty$). Call $p$ meaningful if it is meaningful for some $H$.  

%for each $\beta<\kappa$ we me write
%\[N_\beta =(H_{i_1}+p_1^{e_1}G)\cap \dots \cap (H_{i_k}+p_k^{e_k}G),\] for some primes $p_1,\dots p_k$ and integers $e_i\in \mathbb{N}\cup\{0,\infty\}$ (all depending on $\beta$).
%Notice that \[b_s^\beta +N_\beta=(H_{i_1}+p_1^{e_1}G+b_s^\beta)\cap \dots \cap (H_{i_k}+p_k^{e_k}G+b_s^\beta).\]
%Since $\{b_s^\beta +N_\beta\}_{s<\omega}$ is an inconsistent family of cosets of $N_\beta$, by indiscernibility of $\{b_s^\beta\}_{s<\omega}$, for a certain $1\leq j\leq k$, \[\{H_{i_j}+p_j^{e_j}G+b_s^\beta\}_{s<\omega}\] is also an inconsistent family of cosets of $H_{i_j}+p_j^{e_j}G$. 
%
%Replacing each $N_\beta$ with a suitable definable group of the form $H_{i_j}+p_j^{e_j}G$, we conclude that we may find an inp-pattern of depth $\kappa$ witnessed by subgroups of form $H+p^eG$ where $H$ is one of the definable convex subgroups, $p$ is a prime and $e\in \mathbb{N}\cup\{0,\infty\}$.

Fix a meaningful $p$ and let 
\[
\{H_\alpha+p^{e_\alpha}G\}_{\alpha<m_p}
\]
 be the family of all occurrences of $p$ in our fixed inp-pattern.   By Fact \ref{F:dp-1bsd}, for every $i_0<m_p$ 
\[\left[ \bigcap_{i_0\neq \alpha<m_p} (H_\alpha+p^{e_\alpha}G):\bigcap_{\alpha<m_p} (H_\alpha+p^{e_\alpha}G)\right]=\infty.\]
This implies that if $i\neq j$ then $H_i \neq H_j$. Otherwise, assuming without loss of generality that $e_i <e_j$, we would get 
\[\left[ \bigcap_{i\neq \alpha<m_p} (H_\alpha+p^{e_\alpha}G):\bigcap_{\alpha<m_p} (H_\alpha+p^{e_\alpha}G)\right]=1.\]
This allows us to apply Proposition \ref{P:conc-on inp-patterns}, with the implication that $m_p\leq k_p+1$ (in particular $m_p$ is finite), if $H_\alpha\subsetneq H_\beta$ then $e_\alpha <e_\beta$ and $e_\alpha\neq 0$ for every $\alpha<m_p$. Also, note that necessarily $[G:pG]=\infty$, for, otherwise, this would entail $[G:H_\alpha +p^{e_\alpha}G]<\infty$ which, as noted in the proof of Proposition \ref{P:conc-on inp-patterns}(3), is impossible. 

Summing up the above observations, we may assume the inp-pattern is witnessed by a family of subgroups
\[\{H_{\alpha_p}+p^{e_{\alpha_p}}G\}_{\alpha_p<m_p, p\in\mathbb{P}_\infty(G)}.\]
There can be only one prime $p$ for which with $e_{\alpha_p}=\infty$, for some $\alpha_p$. For otherwise, we would have $H_\alpha\subsetneq H_\beta$ both arising as subgroups in the inp-pattern but this can not be (as already mentioned above). Hence for all but (maybe) one prime $p$, $m_p\leq k_p.$
This proves that $(\star)$ is an upper bound on $\dprk(G)$ and that $\kappa\leq \omega$ with equality possible only if $\mathbb{P}_\infty(G)$ is infinite. We will now show that this bound is attained.

If $\mathbb{P}_\infty(G)$ is empty, then any sequence of pairwise distinct elements gives an inp-pattern of depth $1$ (with the formula $x=y$) so assume that $\mathbb{P}_\infty(G)\neq \emptyset$.  Let $p\in\mathbb{P}_{\infty}(G)$ and let $H_{i_{p,1}}\subsetneq \dots \subsetneq H_{i_{p,k_p}}$ be $\mathcal S_p^\infty \cup \{G\}$ (so  $H_{i_{p,k_p+1}}=G$).

\begin{claim}
The subgroups $\{H_{i_{p,1}}+pG,\dots, H_{i_{p,k_p}} +p^{k_p}G\}$ witness an inp-pattern of depth $k_p$
\end{claim}
\begin{claimproof}
This follows from the paragraph concluding the statement of Fact \ref{F:dp-1bsd} using Lemma \ref{L:inp-patter-for-grps-polyregular}. To apply this last lemma note that the groups $H_{i_{p,j}}$ were chosen specifically so that they satisfy the assumptions of the lemma. 
\end{claimproof}

As a result, for every $p\in\mathbb{P}_\infty(G)$ we have an inp-pattern of depth $k_p$. The next claim shows that we can combine these inp-patterns into one large pattern:
\begin{claim}
The subgroups $\bigcup_{p\in\mathbb{P}_{\infty}(G)}\{H_{i_{p,1}}+pG,\dots, H_{i_{p,k_p}} +p^{k_p}G\}$ witness an inp-pattern of depth $\sum_{p\in\mathbb{P}_\infty(G)}k_p$.
\end{claim}
\begin{claimproof}
Since cosets are always $2$-inconsistent, we only need to check the consistency part of the definition. For each $p\in\mathbb{P}_\infty(G)$, consider a collection $\{H_{i_{p,1}}+pG+b_{i_{p,1}},\dots, H_{i_{p,k_p}} +p^{k_p}G+b_{i_{p,k_p}}\}$ of cosets with non empty intersection. Note that if $b_p$ is any element witnessing this, then $\{H_{i_{p,1}}+pG+b_p,\dots, H_{i_{p,k_p}} +p^{k_p}G+b_p\}$ defines the exact same set. So our task is to show that the (partial) type
\[\tag{*}
\bigcup_{p\in\mathbb{P}_{\infty}(G)}\{x\in H_{i_{p,1}}+pG+b_p,\dots, x\in H_{i_{p,k_p}} +p^{k_p}G+b_p\}
\]
 is consistent. 
 %Since, for each $p$, $\{x\in H_{i_{p,1}}+pG+b_{i_{p,1}},\dots, x\in H_{i_{p,k_p}} +p^{k_p}G+b_{i_{p,k_p}}\}$ is consistent, we may assume that the $b_{i_{p,j}}$ are equal to some $b_p$. 
 Note that for every $p$,
\[ p^{k_p}G+b_p\subseteq (H_{i_{p,1}}+pG+b_p)\cap\dots\cap (H_{i_{p,k_p}} +p^{k_p}G+b_p).\]
By the Chinese remainder theorem for abelian groups, there is an element $b\in G$ such that $b\equiv_{p^{k_p}G} b_{p}$ for all $p\in\mathbb{P}_{\infty}(G)$, proving the consistency of the type $(*)$ and finishing the proof of the claim.
%$\{p^{k_p}G+b_p\}_{p\in\mathbb{P}_\infty(G)}$ is consistent by the Chinese remainder theorem for $\mathbb{Z}$-modules (i.e. abelian groups)
\end{claimproof}

The last claim finishes the proof of the proposition in case $c_G=0$. 
If $c_G=1$ let $H$ be a definable convex subgroup witnessing it.
Consider the following collection of definable subgroups from above
\[\bigcup_{p\in\mathbb{P}_\infty(G)}\{H_{i_p,j}+p^jG\}_{1\leq j\leq k_p}.\]
Enumerate these subgroups by $\{A_\alpha\}_{\alpha<\lambda}$, where $\lambda=\sum_{p\in\mathbb{P}_\infty(G)}k_p$. Let $A_{\lambda}=H$. We will show that for every $i_0<\lambda+1$
\[\left[ \bigcap_{i_0\neq \alpha<\lambda+1} A_\alpha:\bigcap_{\alpha<\lambda+1} A_\alpha\right]=\infty.\] If $i_0=\lambda$ then
\[\left[ \bigcap_{\alpha<\lambda} A_\alpha:\bigcap_{\alpha<\lambda+1} A_\alpha\right] =\infty,\] as in Lemma \ref{L:inp-patter-for-grps-polyregular}.
If $i_0<\lambda$ then we need to show that  
\[\left[ \bigcap_{i_0\neq \alpha<\lambda} A_\alpha\cap H:\bigcap_{\alpha<\lambda} A_\alpha\cap H\right]=\infty.\]
But since, $(H_{i_p,j}+p^jG)\cap H=H_{i_p,j}+p^jH$. This boils down to showing that the index is $\infty$ when we do the calculation inside $H$. By quantifier elimination $H$ is a stably embedded convex subgroup of $G$. As a result, this follows from the same calculation we have done when $c_G=0$.

%The results now follows from Fact \ref{F:dp-1bsd}.
\end{proof}
%
%\textbf{What will be the statement in the next corollary:?}
%\begin{corollary}
%Let $G_1$ and $G_2$ be ordered abelian groups with finite spines, considered in the reduct language, then 
%\[\dprk(G_1\oplus G_2)=\begin{cases} \dprk(G_1)+\dprk(G_2)-1 & \text{if $G_1$ or $G_2$ are dp-minimal}\\
%\dprk(G_1)+\dprk(G_2) & \text{otherwise,}\end{cases} \]
%where $G_1\oplus G_2$ is the lexicographic sum of $G_1$ and $G_2$ considered in the reduct language.
%\end{corollary}

The following argument is similar to the one given by Farr\'e in \cite[Theorem 6.2]{farre}.

\begin{proposition}\label{P:strgdep_implies_A_finite}
	Let $G$ be an ordered abelian group, possibly with additional structure. If $G$ is strongly dependent then $G$ has finite spines, i.e. $\mathcal{S}_p$ is finite for all $p$, and $\mathbb{P}_{\infty}(G)$ is finite.
\end{proposition}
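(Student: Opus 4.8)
The plan is to prove the contrapositive in two parts: if some $\mathcal{S}_p$ is infinite, then $G$ has an inp-pattern of infinite depth (in fact, an infinite collection of mutually indiscernible sequences and a singleton witnessing non-strong-dependence), and similarly if $\mathbb{P}_\infty(G)$ is infinite. Both parts build a witness to the failure of strong dependence directly out of the definable convex subgroups and the divisibility structure, using Fact~\ref{F:shelah-observation} (Shelah's observation) to reduce to producing one indiscernible sequence and one singleton $c$ such that no finite convex equivalence relation makes the pieces $c$-indiscernible.

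First I would handle the case $|\mathcal{S}_p|=\infty$ for some fixed prime $p$. Pick an infinite strictly increasing chain of convex subgroups $H_0\subsetneq H_1\subsetneq H_2\subsetneq\cdots$ coming from $\mathcal{S}_p$; by Lemma~\ref{L:p-div-element-in-Sp}, for each $i$ the quotient $H_{i+1}/H_i$ is \emph{not} $p$-divisible, so we may choose $g_i\in H_{i+1}$ with $g_i\notin H_i+pG$, and moreover (by Fact~\ref{L:prop of H}) the $H_p(g_i)$ are pairwise distinct, lying strictly between consecutive levels. The idea is to use these $g_i$ as parameters defining cosets of $H_{p^{n}}(\cdot)$-type subgroups at infinitely many ``independent'' scales, exactly as in the proof of Fact~\ref{F:dp-1bsd}: for each $i$ one gets a formula $\varphi_i(x,y)$ (roughly, $x-y\in H_i+pG$, or a suitable coset condition involving $H_i$) and an indiscernible sequence of parameters $(b^i_s)_{s<\omega}$ such that $\{\varphi_i(x,b^i_s):s<\omega\}$ is inconsistent, while choosing one condition from each row stays consistent by the Chinese remainder theorem / independence of the different convex levels. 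This produces an inp-pattern of depth $\omega$, contradicting strong dependence. The point is that an infinite spine gives infinitely many genuinely distinct scales at which one can ``split'' $x$, and these do not interact because they sit at strictly nested convex subgroups.

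Second, for $|\mathbb{P}_\infty(G)|=\infty$, fix distinct primes $p_1,p_2,\dots$ with $[G:p_kG]=\infty$ for all $k$. For each $k$ one gets, as in Fact~\ref{F:dp-1bsd} applied to the abelian-structure reduct, an inp-pattern of depth $1$ witnessed by cosets of $p_kG$ (using $[G:p_kG]=\infty$), with an indiscernible parameter sequence. These patterns for different $k$ can be amalgamated into a single pattern of depth $\omega$: $2$-inconsistency of cosets is automatic, and consistency of a choice across rows follows from the Chinese remainder theorem for abelian groups (the moduli $p_k$ being coprime), precisely as in the second claim inside the proof of Proposition~\ref{P:dp-rank-reduct}. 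Again this contradicts strong dependence. A subtlety is that here $G$ may carry extra structure, so one should argue at the level of the group reduct: the cosets of $p_kG$ are $\0$-definable subgroups of the (possibly expanded) group, $2$-inconsistency and mutual consistency are purely group-theoretic, and mutual indiscernibility of the parameter arrays can be arranged by Ramsey/compactness; strong dependence of the richer structure still forbids a depth-$\omega$ inp-pattern.

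The main obstacle is the bookkeeping in the first part: one must make sure that the formulas $\varphi_i$ and the parameter sequences genuinely witness an inp-pattern—i.e.\ that inconsistency within each row and consistency across rows both hold simultaneously—when the ``scales'' are given by an infinite nested chain from $\mathcal{S}_p$ rather than by coprime moduli. Concretely, one needs that for any choice function the intersection of the selected cosets is nonempty, which requires knowing that the proper subintersection $\bigcap_{i\neq i_0}(H_i+pG)$ strictly contains $\bigcap_i(H_i+pG)$ at each level; this is exactly the content of Lemma~\ref{L:inp-patter-for-grps-polyregular} (with all $e_i$ equal, using instead the strict nesting $H_i\subsetneq H_{i+1}$ and non-$p$-divisibility of the quotients), so the heavy lifting is already available and the remaining work is to assemble it into an $\omega$-pattern and invoke Definition~\ref{D:dprk}(5). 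I would then remark that, combined with Lemma~\ref{L:kp is finite} and Proposition~\ref{P:dp-rank-reduct}, this shows strongly dependent OAGs have finite dp-rank, which is one implication of the main theorem.
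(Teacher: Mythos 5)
There is a genuine gap in the $\mathcal{S}_p$ part, in two places. First, you propose the subgroups $H_i+pG$ (``with all $e_i$ equal''), but these are \emph{nested}: since $H_0\subsetneq H_1\subsetneq\cdots$, we have $H_0+pG\subseteq H_1+pG\subseteq\cdots$, so $\bigcap_i(H_i+pG)=H_0+pG$ and for every $i_0>0$ the subintersection $\bigcap_{i\neq i_0}(H_i+pG)$ is \emph{equal} to, not strictly larger than, $\bigcap_i(H_i+pG)$. The index is $1$, not $\infty$, for every $i_0>0$, so the condition of Fact~\ref{F:dp-1bsd} fails and no inp-pattern arises. This is precisely what Proposition~\ref{P:conc-on inp-patterns}(2) forbids, and Lemma~\ref{L:inp-patter-for-grps-polyregular} is stated only for $e_0<e_1<\cdots<e_{n-1}$ for exactly this reason. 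The fix, which is what the paper does, is to take strictly increasing exponents, e.g.\ $G_{\beta_i}+p^{i+1}G$; then each $i_0$ genuinely lowers the index of the sub-intersection.

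Second, Lemma~\ref{L:p-div-element-in-Sp} only gives that $H_{i+1}/H_i$ is \emph{not} $p$-divisible, i.e.\ $[H_{i+1}/H_i:p(H_{i+1}/H_i)]>1$ — possibly finite. But Lemma~\ref{L:inp-patter-for-grps-polyregular} needs $[H_{i+1}/H_i:p(H_{i+1}/H_i)]=\infty$. Passing from ``$>1$'' to ``$=\infty$'' is not automatic: the paper first uses compactness on the interpretable linear order $\mathcal{S}_p$ to reduce to finding an infinite ascending chain, then uses saturation to turn ``for all $n$ there is a chain with consecutive indices $>n$'' into an $\omega$-chain with all consecutive indices infinite. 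Your sketch skips this entirely and applies the lemma directly with quotients that could have finite $p$-index, which would not yield an inp-pattern. By contrast, the $\mathbb{P}_\infty(G)$ part of your argument is essentially the standard one (and the paper just cites \cite{1bsd}); your remark that the pattern lives in the group reduct and so contradicts strong dependence of any expansion is correct and worth spelling out, since the statement allows additional structure.
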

\begin{proof}
	If $\mathbb{P}_{\infty}(G)$ is infinite then $G$ is already not strongly dependent in the group language (see, for example, \cite{1bsd}).
	
	Since $\mathcal{S}_p$ is an interpretable linear order, in order to show that it is finite it is enough, by compactness, to show that it has no infinite ascending chain. By Lemma \ref{L:p-div-element-in-Sp}, if $H_1\subsetneq H_2$ are in $\mathcal S_p$ then $H_2/H_1$ is not $p$-divisible. Therefore, if $G$ is sufficiently saturated and $\mathcal{S}_p$ is infinite we can find for all $n$ an increasing sequence $\langle \alpha_i\in \mathcal{S}_p:i<\omega\rangle$ with \[\left[G_{\alpha_{i+1}}/G_{\alpha_i}:p(G_{\alpha_{i+1}}/G_{\alpha_i})\right]>n,\] for every $i<\omega$. By compactness and saturation we may find such an increasing sequence $\langle \beta_i\in \mathcal{S}_p:i<\omega\rangle$ such that for every $i<\omega$
\[\left[G_{\beta_{i+1}}/G_{\beta_i}:p(G_{\beta_{i+1}}/G_{\beta_i})\right]=\infty.\]
As in the proof of Proposition \ref{P:dp-rank-reduct}, by Lemma \ref{L:inp-patter-for-grps-polyregular} and Fact \ref{F:dp-1bsd}, the definable subgroups \[\{G_{\beta_i}+p^{i+1}G :i<\omega\}\] witness an inp-pattern of depth $\omega$, contradicting strong dependence.
\end{proof}
We now proceed to reintroducing the order:

\begin{lemma}\label{L:reduct-to-order}
Let $(G;+,-,0,<,\dots)$ be an ordered abelian group, possibly with some more relational symbols and constants, admitting quantifier elimination. Let $c\in G$ and $I_1=\langle a_i:i<\omega\rangle$, $I_2=\langle b_i:i<\omega\rangle$ be mutually indiscernible sequences which are also indiscernible over $c$ in the language without the order. Then at least one of $I_1,I_2$ is indiscernible over $c$ in the full language. 

\end{lemma}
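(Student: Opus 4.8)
The goal is to show that if the two sequences $I_1, I_2$ fail to be indiscernible over $c$ in the full (ordered) language, we reach a contradiction with strong dependence of $G$. Since we already have indiscernibility over $c$ in the reduct (order-free) language, any new failure of indiscernibility over $c$ must be "detected" by a formula that genuinely uses the order. The plan is to analyze, via quantifier elimination, exactly which atomic formulas of the full language $L$ of Proposition \ref{P:QE for finite spines} can see the order, and show that the only new information an order-atomic formula can carry about the position of $c$ relative to an indiscernible sequence is where $c$ falls inside finitely many convex "cuts" determined by the convex subgroups $H_i$ and their cosets. Because these cuts form a linear order and there are only countably (indeed, for the relevant cofinal/coinitial analysis, effectively finitely many relevant ones per formula) many, a single element $c$ can only "split" each of the two sequences in a controlled convex way.

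\textbf{Key steps.} First, I would reduce to quantifier-free formulas using the QE of Proposition \ref{P:QE for finite spines}: a failure of $c$-indiscernibility of, say, $I_1$ is witnessed by a quantifier-free $L$-formula $\psi(\bar x_1,\dots,\bar x_k, c)$ together with two increasing tuples of indices from $\omega$ on which $\psi$ takes different truth values. Next, I would decompose $\psi$ into a boolean combination of atomic formulas; the atomic formulas of $L$ are of three kinds: (a) the order atom $t(\bar x, c) < t'(\bar x, c)$ or $t = t'$ for $\Zz$-linear terms $t,t'$; (b) the congruence-type atoms $x \equiv_{m,H_i} y + k_{G/H_i}$ and $x =_{H_i} y + k_{G/H_i}$, which live purely in quotients $G/H_i$ and are part of the reduct language $L_{\text{reduct}}$. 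Since $I_1,I_2$ are already indiscernible over $c$ in $L_{\text{reduct}}$, the atoms of type (b) cannot witness the failure — so the failure is witnessed by an order atom of type (a). Thus, after rearranging, there is a $\Zz$-linear term $\ell(\bar x_1, \dots, \bar x_k) + mc$ (some $m \in \Zz$; after absorbing, one may take the coefficient of $c$ to be $\pm 1$ or reduce to comparing $c$ with a term in the sequence elements) such that the sign of $\ell(\bar a) + mc$ versus $0$ is not constant across the relevant index tuples, even though all order-free data is constant.

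\textbf{Main obstacle and its resolution.} The crux — and the step I expect to be hardest — is controlling the interaction between the order and the congruence structure: an order atom like $\ell(\bar a) + c < 0$ can change truth value along the sequence only if the element $\ell(\bar a) + c$ straddles $0$, i.e. the sequence $\langle \ell(\bar a_{\bar i}) : \bar i \rangle$ straddles $-c$ from both sides. By mutual indiscernibility and order-reduct indiscernibility, the elements $\ell(\bar a_{\bar i})$ for varying index tuples all lie in a single coset of a fixed $\0$-definable subgroup, and differences $\ell(\bar a_{\bar i}) - \ell(\bar a_{\bar j})$ lie in that subgroup; the key point is that this subgroup, intersected with the convex hull of the relevant configuration, is a convex subgroup or close to one, so "straddling $-c$" can happen for at most one convex cut, and hence the set of index tuples where the atom holds is convex (an initial or final segment in the induced order on tuples). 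Running this simultaneously for the finitely many order atoms appearing in $\psi$: each defines a convex condition on tuples from $I_1$, so $I_1$ is "cut" by $c$ into finitely many convex pieces on which $\psi(\cdot, c)$ is constant — and the same for $I_2$. Now I would invoke Fact \ref{F:shelah-observation} (Shelah's observation): strong dependence says that for the indiscernible sequence $I_1$ and the singleton $c$ (here $c$ a singleton — if $c$ is a tuple, iterate element by element, using that strong dependence is preserved), there is a finite convex equivalence relation $E_1$ on $\omega$ such that each $E_1$-class is $c$-indiscernible, and likewise $E_2$ for $I_2$. If \emph{both} $I_1$ and $I_2$ genuinely fail to be $c$-indiscernible, then both $E_1$ and $E_2$ are nontrivial, meaning $c$ realizes a non-trivial cut in each sequence. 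But a single element $c$, sitting in the linearly ordered group $G$, together with the already-present convex structure, can realize a non-trivial order cut compatibly in \emph{at most one} of two mutually indiscernible sequences — because the two sequences being mutually indiscernible over $\0$ forces their "positions" relative to $c$ to be aligned into a single ordered configuration in which $c$ picks out one cut; formalizing this last alignment (that the failure cannot occur in both) via mutual indiscernibility and the convexity analysis above is the real content, and is where I'd spend the most care. The conclusion is that at least one of $I_1, I_2$ has trivial $E$, i.e. is indiscernible over $c$ in the full language.
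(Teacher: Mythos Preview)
Your proposal has a genuine gap: you invoke strong dependence of $G$ (via Fact \ref{F:shelah-observation}) as the source of the contradiction, but strong dependence is \emph{not} a hypothesis of this lemma. The statement assumes only that $G$ is an ordered abelian group with quantifier elimination in a language extending $\{+,-,0,<\}$ by relation symbols and constants. Your plan ``if both fail, contradiction with strong dependence'' therefore proves a weaker statement than required, and in fact the lemma is needed precisely in order to \emph{establish} bounds on dp-rank later on.

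Most of the machinery you introduce (the analysis of which convex subgroup contains differences of terms, the finite convex equivalence relations $E_1,E_2$, the iteration over coordinates of $c$) is unnecessary. The correct core idea is the one you state at the very end --- that $c$ can realize a genuine order cut in at most one of two mutually indiscernible sequences --- but this follows by a two-line direct argument, not via Shelah's observation. Concretely: since $I_1,I_2$ are already indiscernible over $c$ in the reduct, any failure in the full language is witnessed by an order atom, which after clearing denominators takes the form $t_1(\bar a_I) < c < t_1(\bar a_{I'})$ for some $\Zz$-linear term $t_1$ and index tuples $I,I'$ (and similarly $t_2(\bar b_J) < c < t_2(\bar b_{J'})$ for $I_2$). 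Without loss $t_2(\bar b_J) \le t_1(\bar a_I) < c$, whence $t_1(\bar a_I) < t_2(\bar b_{J'})$; but mutual indiscernibility of $I_1,I_2$ forces the order relation between $t_1(\bar a_I)$ and $t_2(\bar b_{J})$ to agree with that between $t_1(\bar a_I)$ and $t_2(\bar b_{J'})$, a contradiction. That is the entire proof in the paper.
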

\begin{proof}
Every term $t(x_1,\dots, x_n)$ is equivalent to a term of the form \[\sum_{i=1}^n z_i\cdot x_i+d,\] where $z_i\in \mathbb{Z}$ and $d$ is a  $\mathbb{Z}$-linear combination of constants. Thus every quantifier free formula in the ordered group language, not using equality, is equivalent to $\sum_{i=1}^n a_i\cdot x_i+d>0.$

Assume towards a contradiction that there are terms $t_1(\bar{x})$ and $t_2(\bar{y})$, of the above form, such that $t_1(\bar a_I)<zc< t_1(\bar a_{I'})$  and $t_2(\bar b_J)<wc<t_2(\bar b_{J'})$
where $w,z\in\mathbb{N}$ and $I,I',J, J'\subseteq \omega$ are some index sets of the appropriate lengths.
By replacing $t_1$ with $wt_1$, $t_2$ with $zt_2$ and $c$ with $wzc$ we may assume that
\[t_1(\bar a_I)<c<t_1(\bar a_{I'})\] and that
\[t_2(\bar b_J)<c<t_2(\bar b_{J'}).\]
Without loss $t_2(\bar b_J)\leq t_1(\bar a_I)<c$ so $t_1(\bar a_I)<t_2(\bar b_J')$, contradicting mutual indiscernibility.
\end{proof}

\begin{proposition}\label{P:reduct-leq-p1}
	Let $(G;+,-,0,<,\{H_i\}_{i<\omega})$ be an ordered abelian group with finite spines, possibly with some more constants, admitting quantifier elimination. Then, in the above notation, $\dprk(G)\leq \sum_{p\in \mathbb{P}_{\infty}(G)}k_p+1$. In particular, if $c_G=1$ then $\dprk_{\text{reduct}}(G)=\dprk(G)$.
\end{proposition}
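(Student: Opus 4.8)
The plan is to bound $\dprk(G)$ from above by $\sum_{p\in\mathbb P_\infty(G)}k_p+1$, using that the only new data re-introduced over the reduct is the order, together with Lemma \ref{L:reduct-to-order}. Suppose for contradiction that $\dprk(G)\geq \lambda+2$, where $\lambda:=\sum_{p\in\mathbb P_\infty(G)}k_p$ (the case $\mathbb P_\infty(G)=\emptyset$, where we must rule out $\dprk(G)\geq 2$, is handled the same way, with $\lambda=0$). Then there are $\lambda+2$ mutually indiscernible sequences $\langle I_\alpha:\alpha<\lambda+2\rangle$ and a singleton $c$ witnessing this, i.e. none of the $I_\alpha$ is indiscernible over $c$. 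I will first argue that one can assume the order plays a role in at most one of these sequences: apply Lemma \ref{L:reduct-to-order} pairwise. More precisely, if two of the sequences, say $I_\alpha,I_\beta$, were both indiscernible over $c$ in the reduct (order-free) language but not in the full language, then Lemma \ref{L:reduct-to-order} gives a contradiction — at least one of them must be indiscernible over $c$ in the full language, contradicting the choice of the array. Hence at most one $I_\alpha$ is "order-sensitive"; removing it leaves $\lambda+1$ sequences, mutually indiscernible and none indiscernible over $c$, which are moreover indiscernible over $c$ in the reduct language.

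The subtle point is that deleting a sequence from a mutually indiscernible array preserves mutual indiscernibility of the rest, and that each of the remaining $I_\alpha$ still fails to be indiscernible over $c$ in the reduct: the latter is exactly what Lemma \ref{L:reduct-to-order} buys us once we know at most one sequence is order-sensitive. (Here I should be slightly careful: Lemma \ref{L:reduct-to-order} is stated for two sequences, so I will iterate it — if $I_{\alpha_1},\dots,I_{\alpha_k}$ are all order-sensitive, pairwise application forces all but one to be order-insensitive, a contradiction as soon as $k\geq 2$.) After this reduction we have $\lambda+1$ mutually indiscernible sequences, none indiscernible over $c$, witnessing $\dprk_{\text{reduct}}(G)\geq \lambda+1$. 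But by Proposition \ref{P:dp-rank-reduct}, $\dprk_{\text{reduct}}(G)=c_G+\lambda\leq \lambda+1$, and in fact $\dprk_{\text{reduct}}(G)\leq\lambda+1$ always; so this is consistent — I need one more unit of slack. The point is that the deleted order-sensitive sequence was genuinely necessary, so the $\lambda+1$ remaining sequences witness $\dprk_{\text{reduct}}(G)\geq\lambda+1$, hence $c_G=1$; but then I have used up the entire reduct-rank on order-insensitive sequences, leaving no room, and the original array of size $\lambda+2$ forces $\dprk_{\text{reduct}}(G)\geq\lambda+1$ while simultaneously one sequence is order-sensitive, pushing the total to $\lambda+2>c_G+\lambda$ — contradiction whenever $c_G\leq 1$, which always holds.

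Let me restate the clean argument: since an inp-pattern / dp-rank witness of depth $\lambda+2$ yields, after discarding the (at most one) order-sensitive sequence, a witness of depth $\lambda+1$ for $\dprk_{\text{reduct}}(G)$, and since Proposition \ref{P:dp-rank-reduct} gives $\dprk_{\text{reduct}}(G)=c_G+\sum_{p\in\mathbb P_\infty(G)}k_p$ when $\mathbb P_\infty(G)\neq\emptyset$ (and $=1$ otherwise), we obtain $\lambda+1\leq c_G+\lambda$, i.e. $c_G\geq 1$; combined with $c_G\leq 1$ this gives $c_G=1$ and $\dprk_{\text{reduct}}(G)=\lambda+1$. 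So the $\lambda+1$ surviving sequences already exhaust the reduct dp-rank; now re-examine the discarded sequence $I_{\alpha_0}$: it together with the others forms a mutually indiscernible array of size $\lambda+2$ none of whose members is indiscernible over $c$, and since $I_{\alpha_0}$ is the unique order-sensitive one, the $\lambda+1$ others together with $c$ witness $\dprk_{\text{reduct}}(G)\geq\lambda+1$ — fine — but then by Lemma \ref{L:reduct-to-order} applied to $I_{\alpha_0}$ against any other sequence, that other sequence must be indiscernible over $c$ in the full language (since $I_{\alpha_0}$ is order-sensitive), contradicting the choice of the original array. This contradiction establishes $\dprk(G)\leq\lambda+1=\sum_{p\in\mathbb P_\infty(G)}k_p+1$. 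The final sentence is then immediate: when $c_G=1$, Proposition \ref{P:dp-rank-reduct} gives $\dprk_{\text{reduct}}(G)=1+\sum_{p\in\mathbb P_\infty(G)}k_p=\dprk(G)$, since $\dprk_{\text{reduct}}(G)\leq\dprk(G)$ always (the reduct has fewer definable sets).

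\textbf{Main obstacle.} The delicate bookkeeping is the passage from "at most one sequence is order-sensitive" to an actual numerical contradiction; one must be careful that Lemma \ref{L:reduct-to-order} only compares two sequences at a time, so the argument that order-sensitivity can occur in at most one sequence requires iterating the lemma, and one must verify that mutual indiscernibility and the reduct-indiscernibility-over-$c$ hypotheses are genuinely preserved when passing to sub-arrays. Getting the "$+1$" to come out exactly — rather than "$+$(number of order-sensitive sequences)" — is the crux, and it rests precisely on Lemma \ref{L:reduct-to-order} forbidding two simultaneously order-sensitive mutually indiscernible sequences over a singleton.
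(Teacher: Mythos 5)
Your setup matches the paper's opening move: take $\kappa+2$ mutually indiscernible sequences where $\kappa=\sum_{p\in\mathbb P_\infty(G)}k_p$, a singleton $c$, and use Lemma \ref{L:reduct-to-order} to conclude that at most one of the sequences can be indiscernible over $c$ in the reduct language (since two such would force one to be fully indiscernible over $c$). Combined with $\dprk_{\text{reduct}}(G)\leq\kappa+1$, exactly one sequence, say $I_0$, is reduct-indiscernible over $c$ but not fully indiscernible, and the remaining $\kappa+1$ fail reduct-indiscernibility. Your derivation that this forces $c_G=1$ is correct. But here you have exhausted the easy part.

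The closing move is where your argument breaks. You write that by applying Lemma \ref{L:reduct-to-order} to the pair $(I_{\alpha_0}, I_\beta)$ — with $I_{\alpha_0}$ reduct-indiscernible but order-sensitive, and $I_\beta$ one of the remaining sequences — the lemma forces $I_\beta$ to be fully indiscernible over $c$. This is not what the lemma says, and you cannot even invoke it here: its hypothesis is that \emph{both} sequences are indiscernible over $c$ in the reduct language, but $I_\beta$ is precisely one of the $\kappa+1$ sequences that fail reduct-indiscernibility over $c$. Lemma \ref{L:reduct-to-order} gives no information about such a pair. (You also have an internal inconsistency: in one sentence the remaining $\lambda+1$ sequences are said to be "indiscernible over $c$ in the reduct language," in the next they "still fail to be indiscernible over $c$ in the reduct"; only the latter is correct.) So no contradiction has been reached, and the configuration you exhibit — one reduct-indiscernible order-sensitive sequence plus $\kappa+1$ sequences each failing reduct-indiscernibility, with $\dprk_{\text{reduct}}(G)=\kappa+1$ — has not been ruled out.

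What the paper does at this stage, and what is actually needed, is a detailed analysis of the witnessing formulas. For each $i>0$ one picks a reduct-formula $\varphi_i(\bar x_i, c)$ witnessing the failure; by quantifier elimination (Proposition \ref{P:definable-groups}) one may take $\varphi_i(\bar b, x)$ to define a coset of a $\emptyset$-definable subgroup $A_i$, which is either a convex $H_i$ or of the form $H_i+p_i^{e_i}G$ with $H_i\in\mathcal S_{p_i}$. Then one proves a three-part claim: (1) $A_i\not\subseteq A_j$ for distinct $i,j>0$; (2) two $A_i, A_j$ attached to the same prime satisfy the "unbounded index" condition and force $p\in\mathbb P_\infty(G)$; (3) — crucially, and this is where the order and the order-sensitivity of $I_0$ enter — $A_i$ cannot be a convex subgroup $H_i$, because the term $t_0$ witnessing the order-sensitivity of $I_0$ brackets $c$, and convexity of $H_i$ then contradicts mutual indiscernibility. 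With (3) in hand, every $A_i$ has $e_i<\infty$, and by (2) together with Lemma \ref{L:kp-witnesses} (and Lemma \ref{L:inp-patter-for-grps-polyregular}) at most $k_p$ such subgroups can be attached to each prime $p$; distinctness of the primes from (1)–(2) bounds the total at $\kappa$. But you have $\kappa+1$ sequences $I_i$ with $i>0$, each requiring its own $A_i$ — contradiction. That counting argument, and in particular Claim (3) with its use of convexity, is the content you are missing; Lemma \ref{L:reduct-to-order} alone cannot supply it.
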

\begin{proof}
	Because $G$ has finite spines $\dprk(G)\le \aleph_0$, and in case $\dprk(G)=\aleph_0$ the proposition is proved. So we assume that $G$ is of finite dp-rank. Let $\kappa=\sum_{p\in \mathbb{P}_{\infty}(G)}k_p$ and $\langle I_i:i<\kappa +2 \rangle$ be a sequence of mutually indiscernible sequences. Fix some $c\in G$. We will show that at least one of the $I_i$ is indiscernible over $c$. 
	
	If there are two sequences $I_{i_1}$ and $I_{i_2}$, both indiscernible over $c$ in the reduct language then by Lemma \ref{L:reduct-to-order} at least one of them is indiscernible over $c$ in the full language. 
	%Thus if $c_G=0$ we are done.
	We may thus assume that 
	% $c_G=1$ and that 
	there is a most one of the $I_i$ which is indiscernible over $c$ in the reduct language and as $\dprk_{\text{reduct}}(G)\leq\kappa+1$,  such $I_i$ does exist. 
	
	Assume, without loss of generality, that $I_0$ is indiscernible over $c$ in the reduct language, but not indiscernible over $c$ in the full language, furthermore assume that for $i>0$, $I_{i}$ is not indiscernible over $c$ in the reduct language. Consequently, for each such $i>0$, there is a formula $\varphi_i(\bar x_i,c)$ in the reduct language witnessing this. Namely, if $I_i=\langle a_{i,j}:j<\omega\rangle$ then $\varphi_i(a_{J_{i,1}},c)$ and $\neg\varphi_i(a_{J_{i,2}},c)$ for some $J_{i,1},J_{i,2}\subseteq \{i\}\times \omega$ of the same order type. 
	
	Note that if $\varphi_i(\bar x,c)$ is a boolean combination of some formulas, then already one of the formulas in this combination witnesses non-indiscernibility over $c$. By Proposition \ref{P:definable-groups} and the remark following it we may assume that for every tuple $\bar b$ and each $\varphi_i(\bar x_i,x)$, the formula $\varphi_i(\bar b,x)$ defines a coset of  $A_i$, where $A_i$ is a definable subgroup of the form $H_i$ or $H_i+p_i^{e_i}G$ with $H_i\in \mathcal S_{p_i}$.
	
	\begin{claim}
		\begin{enumerate}
			\item For every distinct $i,j>0$, $A_i\not\subseteq A_j$.
			\item For every distinct $i,j>0$, if $A_i, A_j$ are of the form $H_i+p^{e_i}G$, $H_j+p^{e_j}G$ then $p\in \mathbb{P}_\infty (G)$, and either $[A_i:A_j\cap A_i]=\infty$ or $[A_i:A_j\cap A_i]=\infty$.
			\item For every $i>0$, $A_i$ is not of the form $H_i$.
		\end{enumerate}
	\end{claim}
	\begin{claimproof}
		Let $i,j>0$ be distinct, and assume that $A_i\subseteq A_j$. Thus $\varphi_i(a_{J_{i,1}},x)$  also defines a coset of $A_j$, and since $c$ satisfies $\varphi_i(a_{J_{i,1}},x)\wedge \varphi_j(a_{J_{j,1}},x)$, necessarily $\varphi_i(a_{J_{i,1}},x)\rightarrow \varphi_j(a_{J_{j,1}},x)$. Therefore, by mutual indiscernibility,
		\[\varphi_i(a_{J_{i,1}},x)\rightarrow \varphi_j(a_{J_{i,2}},x)\]
		but this contradicts the fact that $\neg \varphi_j(a_{J_{j,2}},c)$. 
		This gives $(1)$.
		%
		% $c+t_i(\bar a_I)\in A_i\subseteq A_j$, $c+t_j(\bar a_{J_1})\in A_j$ and $c+t_j(\bar a_{J_2})\notin A_j$ for suitable $I,J_1,J_2\subseteq \omega$. Consequently, $t_i(\bar a_I)- t_j(\bar a_{J_1})\in A_j$ and hence by mutual indiscernibility $t_i(\bar a_I)- t_j(\bar a_{J_2})\in A_j$ so $c+t_j(\bar a_{J_2})\in A_j$, contradiction This gives $(1)$.
		%
		
		As for $(2)$, let $I_i'$ and $I_j'$ be sequences of tuples of order type $J_{i,1}$ (resp. $J_{j,2}$) tuples in $I_i$ (resp. $I_j$) such that the convex hulls of any two tuples are disjoint. By mutual indiscernibility of $I_i$ and $I_j$, $I_i'$ and $I_j'$ are also mutual indiscernible sequences, and together with $\phi_i(\bar x_i,x)$ and $\phi_j(\bar x_j,x)$ they form an inp-pattern of depth 2. Indeed, inconsistency is clear, as for consistency, $c\models \varphi_i(a_{J_{i,1}},x)\land \varphi_j(a_{J_{j,1}},x)$ but since $I_i'$ and $I_j'$ are mutually indiscernible any path in the array is consistent. By Fact \ref{F:dp-1bsd} the desired conclusion follows. Note that this also proves that $p\in \mathbb{P}_{\infty}(G)$, for otherwise $[G:A_i]<\infty$.
%		
%		
%		to show that $p\in \mathbb P_\infty(G)$ it will suffice to show that $[G:A_i]=\infty$. If not, then by indiscernibility (over $\0$) the formula $\phi_(a_{J_{i,1}},x)$ and $\phi_i(a_{J_{i,2}}, x)$ must define the same coset of of $A_i$, contradicting our assumption on $c$. If $A_i=H_i+p^{e_i}G$ and $A_j=H_j+p^{e_j}G$ then without loss of generality $H_i\le H_j$ and equality cannot hold by (1) above. If $[H_j:H_i]$ were finite then by indiscernibility, again, $\phi_i(a_{J_1},x)$ and $\phi_i(a_{J_2}, x)$ would define the same coset of $A_i$ in $A_j$ -- so, in particular the same coset in $G$, with the same contradiction as above. 
		
%		if $A_i$ has finite index in $G$ then we may find $J,J'\subseteq \{i\}\times \omega$ which are disjoint coordinatewise, $J_{i,1}<J',J$ coordinatewise, and $(\neg \varphi_i(a_{J'},c))\wedge (\neg \varphi_i(a_{J'},c))$, but
%		\[\varphi_i(a_{J},x)\leftrightarrow \varphi_i(a_{J'},x).\]
%		Since for stable theories indiscernible sequences are indiscernible sets, by indiscernibility
%		\[\varphi_i(a_{J_{i,1}},x)\leftrightarrow \varphi_i(a_{J'},x),\]
%		which contradicts the fact that $\varphi_i(a_{J_{i,1}},c)$.
		
		%we can find $J,J'\subseteq \omega$  disjoint such that $t_i(a_J)-t_i(a_{J'})\notin A_i$. Thus, by indiscernibility, it it is true for any $J'',J'''$ disjoint, ineed in stable theories, indiscernible sequences are indiscernible sets. We conclude that $A_i$ has infinite index in $G$.
		
		Finally, for $(3)$, if $A_i$ is of the form $H_i$ then, since $I_0$ is not indiscernible over $c$, we can find $J_0,J_0'\subseteq \{0\}\times\omega$ and $J_{i,1},J_{i,2}\subseteq \{i\}\times \omega$ such that, after replacing $c$ with $mc$ for some $m\in \mathbb Z$ we get
		\[t_0( a_{J_0})>c \text{ but } t_0( a_{J_0'})<c,\]
		and
		\[\varphi_i(a_{J_{i,1}},c) \text{ but } \neg \varphi_i(a_{J_{i,2}},c).\]
		We may assume that $c>0$. Note that by indiscernibility of $I_{i}$ necessarily $c\notin H_i$, indeed otherwise
		\[\varphi_i(a_{J_{i,1}},x)\leftrightarrow x\in H_i.\]
		
		Since $H_i$ is convex, necessarily
		\[\neg \varphi_i(a_{J_{i,2}},t_0(a_{J_0}))\]
		and 
		\[\varphi_i(a_{J_{i,1}},t_0(a_{J_0'})),\]
		which contradicts mutual indiscernibility.
	\end{claimproof}
	
	We can now finish the proof. If $c_G=0$ then $\dprk(G)=\kappa$ and so we must have two sequences which are indiscernible over $c$ in the reduct language, so we finish by Lemma \ref{L:reduct-to-order}.  Otherwise, $c_G=1$ and $\dprk_{reduct}(G)=\kappa+1$. By (3) of the above claim for all $i>0$,  if $A_i$ is a definable group appearing above, then $A_i$ is of the form $H_i+p_i^{e_i}G$ with $e_i<\infty$. By (2) of the claim and Lemma \ref{L:kp-witnesses}, if $ \mathcal A_p$ is the collection of all groups $A_i$ above associated with the same prime $p$, then $|\mathcal A_p|\le k_p$. By (2) again all primes $p$ appearing above belong to $\mathbb P_\infty(G)$ and by (1) of the claim they are, in particular, distinct. So, all in all, there are at most $\kappa=\sum\limits_{p\in \mathbb P_\infty} k_p$ groups $A_i$ appearing in the above. But, by assumption,  $I_i$ is associated with some definable group $A_i$ for all $0<i<k+1$. This is a contradiction.

\end{proof}

The following example shows that quantifier elimination is essential for the proposition.

\begin{example}
In the notation of Example \ref{Zp} consider $G=\bigoplus_{i<\omega} \mathbb{Z}_{(2)}$ in the language of ordered abelian group. It has infinitely many definable convex subgroups. Indeed, fixing 
\[e_i(j)=\begin{cases} 1 & \text{if } j=i\\
0 & \text{otherwise}\end{cases},\]
we get that the groups $H_2(e_i)$ (in the sense of Section \ref{ss:oag}) are all definable and distinct for $i<\omega$. But by Proposition \ref{P:strgdep_implies_A_finite}, $G$ is not strongly dependent. On the other hand, as an abelian group $G$ is dp-minimal (see, for example, \cite{1bsd}).
\end{example}

%Recall the notation $k_p$ and $c_G$ from above.
Summing up all of the above we can finally conclude our computation of the dp-rank: 

\begin{proposition}\label{P:dp-rankpoly}
Let $G$ be an ordered abelian groups with finite spines. Then $\dprk(G)=1+\sum_{p\in \mathbb{P}(G)}k_p$.
\end{proposition}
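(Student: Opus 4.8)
The plan is to combine the two dp-rank computations already carried out: Proposition \ref{P:dp-rank-reduct} gives the dp-rank of $G$ in the reduct language (which drops the order), and Proposition \ref{P:reduct-leq-p1} controls the effect of reintroducing the order. First I would recall that, since $G$ has finite spines, $\dprk(G)\le\aleph_0$; moreover, any reduct has dp-rank at most that of the full structure, so $\dprk_{\text{reduct}}(G)\le\dprk(G)$. By Proposition \ref{P:dp-rank-reduct}, $\dprk_{\text{reduct}}(G)=c_G+\sum_{p\in\mathbb{P}_\infty(G)}k_p$ when $\mathbb{P}_\infty(G)\ne\emptyset$, and $1$ otherwise. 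Writing $\kappa=\sum_{p\in\mathbb{P}_\infty(G)}k_p$, this gives the lower bound $\dprk(G)\ge\dprk_{\text{reduct}}(G)\ge 1+\kappa$ in the degenerate case $\mathbb{P}_\infty(G)=\emptyset$ (where $\kappa=0$), and $\dprk(G)\ge c_G+\kappa$ in general.

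For the upper bound I would split into the two cases according to $c_G$. If $c_G=1$, Proposition \ref{P:reduct-leq-p1} gives directly $\dprk(G)\le\kappa+1$, and its final clause gives $\dprk_{\text{reduct}}(G)=\dprk(G)$; since $\dprk_{\text{reduct}}(G)=c_G+\kappa=\kappa+1$ in this case, we conclude $\dprk(G)=\kappa+1=1+\kappa$, as desired. If $c_G=0$, then Proposition \ref{P:reduct-leq-p1} still gives $\dprk(G)\le\kappa+1$, and the lower bound from the reduct only gives $\dprk(G)\ge\kappa$, so I need to produce an extra unit of dp-rank coming from the order itself. The natural move is to exhibit an inp-pattern (or witness of dp-rank) of depth $\kappa+1$: take the $\kappa$ group-theoretic rows from the proof of Proposition \ref{P:dp-rank-reduct} (the cosets of $H_{i_{p,j}}+p^jG$ for $p\in\mathbb{P}_\infty(G)$, $1\le j\le k_p$), and add one further row built from an order formula such as $x<y$ applied to an indiscernible sequence cofinal in $G$ (or, more robustly, a sequence realizing a cut above all the $H_p$). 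One must check this last row is mutually indiscernible with the others and that every path through the resulting array is consistent.

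The main obstacle will be verifying consistency of the combined pattern when $c_G=0$: one has to simultaneously satisfy the congruence conditions from the $\kappa$ group rows — handled, as in Proposition \ref{P:dp-rank-reduct}, by the Chinese remainder theorem — and an order/cut condition from the extra row, while the whole point of $c_G=0$ is that there is \emph{no} definable convex subgroup strictly containing all the $H_p$, so the order datum is genuinely "independent" of the congruence data only in a slightly delicate sense. The cleanest way around this is probably to argue abstractly: in the reduct we already have a witness of dp-rank $\kappa$ by mutually indiscernible sequences $\langle I_i:i<\kappa\rangle$ with a common $c$; now take a further indiscernible sequence $I_\kappa$ of elements realizing the type of elements "large in $G$" relative to the locations pinned down by $c$ and the $I_i$, and use quantifier elimination (Proposition \ref{P:QE for finite spines}) together with Lemma \ref{L:reduct-to-order} in reverse — i.e. show the order genuinely adds a dimension precisely because $c_G=0$ makes the relevant convex subgroup undefinable, so the order-cut cannot be absorbed into the existing congruence data.

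Putting the two cases together yields $\dprk(G)=1+\kappa=1+\sum_{p\in\mathbb{P}_\infty(G)}k_p$, which is the claimed formula (noting $\mathbb{P}(G)$ in the statement must mean $\mathbb{P}_\infty(G)$, and that when $\mathbb{P}_\infty(G)=\emptyset$ the sum is empty and $\dprk(G)=1$, consistent with Proposition \ref{P:dp-rank-reduct}). I would present the argument as: (i) lower bound via the reduct and, when $c_G=0$, an explicit extra order-row; (ii) upper bound via Proposition \ref{P:reduct-leq-p1}; (iii) reconcile the two, using the $c_G=1$ clause of Proposition \ref{P:reduct-leq-p1} to pin down equality in that case.
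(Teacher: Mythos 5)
Your overall skeleton is right and agrees with the paper: you dispose of the $c_G=1$ case via Proposition~\ref{P:reduct-leq-p1}, and you correctly see that the formula follows once you can exhibit one extra inp-row ``from the order'' when $c_G=0$. The upper bound via Proposition~\ref{P:reduct-leq-p1} and the degenerate case $\mathbb{P}_\infty(G)=\emptyset$ are also fine. The gap is in the $c_G=0$ lower bound, and it is a genuine one.

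First, the row you propose does not give an inp-row. If $\varphi(x,y)$ is $x<y$ and $(b_i)_{i<\omega}$ is an increasing indiscernible sequence, then $\{x<b_i : i<\omega\}$ is \emph{consistent} (any $x<b_0$ realises all of it), so it fails the $k$-inconsistency requirement in Definition~\ref{D:dprk}. You need genuine \emph{intervals}, i.e.\ a formula such as $b_i<x<b_{i+1}$ in pairs $(b_i,b_{i+1})$ from the indiscernible sequence. Second, ``a sequence realizing a cut above all the $H_p$'' is not the right place to put those intervals, and the appeal to ``Lemma~\ref{L:reduct-to-order} in reverse'' does not yield an argument: that lemma says one of two sequences must remain indiscernible after reintroducing the order, which is an upper-bound tool; there is no converse statement to invoke. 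What is actually needed is a concrete verification that the augmented array is an inp-pattern, and in particular that every path is consistent.

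The way the paper closes this gap is explicit and you would have to reproduce something like it. Since we are not in the dp-minimal case, there is a prime $q$ with $[G:qG]=\infty$; after replacing $G$ by a lexicographic sum $\bigoplus_{i\in I}G_i$ of archimedean groups via Corollary~\ref{C:elem. equiv. arhcim grps}, the finite-spines hypothesis forces one of the non-$q$-divisible summands $G_j$ to be dense (a discrete archimedean group is $\mathbb{Z}$, which has finite index in $q\mathbb{Z}$, and only finitely many summands can fail $q$-divisibility). One then takes an ascending indiscernible sequence $(b_i)$ in $G_j$ and adjoins the row of intervals $C_i=(\ldots,0,b_i,0,\ldots)<x<(\ldots,0,b_{i+1},0,\ldots)$. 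Inconsistency of this row is automatic. For consistency of paths, the key point is that the intersection of any path through the group rows contains a coset of $nG$ for a suitable $n$ (because $p^{e_k}G\subseteq\bigcap_k(H_{i_k}+p^{e_k}G)$ and then one intersects over primes via the Chinese remainder theorem), so the problem reduces to showing that a coset of $nG$ meets $C_i$; this is where density of $G_j$ is used. Your intuition that $c_G=0$ makes the order datum ``independent'' of the congruence data is what the intervals-in-a-dense-summand construction makes precise, but as written your proposal stops short of an actual proof at exactly this step.
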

%\begin{remark}
%We believe that the dp-rank should be exactly $1+\sum_{p\in \mathbb{P}(G)}k_p$. See \cite{farre}, for more information.
%\end{remark}
\begin{proof}
If $[G:pG]<\infty$ for every prime $p$ then $G$ is dp-minimal by \cite[Proposition 5.1]{dpmin-val-fields} in which case the proposition holds. So we may assume this is not the case.

\underline{Case 1:} Assume $c_G=0$, and hence $\dprk_{\text{reduct}}(G)=\sum_{p\in\mathbb{P}_\infty(G)}k_p$. Since $G$ is not dp-minimal, there exists a prime $q$ with $[G:qG]=\infty$. By Corollary \ref{C:elem. equiv. arhcim grps}, we may assume that $G=\bigoplus_{i\in I} G_i$, where the $G_i$ are non zero archimedean groups.
Since every discrete archimedean ordered abelian group is isomorphic to $\mathbb{Z}$ the existence of a prime $q$ such that $[G:qG]=\infty$, and the fact that $G$ is with finite spines, guarantee the existence of a dense archimedean $G_j$.

Let $(b_i)_{i<\omega}$ be an ascending indiscernible sequence of elements of the ordered set $G_j$ and $C_i$ be the definable convex subset defined by 
\[
x\in \left((\dots,0,b_i,0,\dots),(\dots,0,b_{i+1},0,\dots)\right).
\] 
The proof of Proposition \ref{P:dp-rank-reduct} provides an inp-pattern witnessing the fact that \[\dprk_{reduct}(G)=\sum_{p\in \mathbb{P}(G)}k_p.\] Our goal is to augment this inp-pattern by adjoining the formulas $\{x\in C_i\}_{i<\omega}$. By Proposition \ref{P:reduct-leq-p1}, it will suffice to show that this augmented pattern is an inp-pattern. Since inconsistency is automatic, we only have to check consistency of paths. As before, since $c_G=0$ and 
\[
p^{e_k}G\subseteq (H_{i_1}+p^{e_1}G)\cap\dots\cap (H_{i_k} +p^{e_k}G),
\] 
we only need to show that $nG\cap C_i$ is consistent for every $n\in\mathbb{N}$ and $i<\omega$. This is an easy exercise (see e.g. \cite[Lemma 1.1]{polyregular}).

\underline{Case 2:} If $c_G=1$, the result is given by Proposition \ref{P:reduct-leq-p1}.
\end{proof}

\begin{corollary}
Let $G_1$ and $G_2$ be ordered abelian groups with finite spines then
\[\dprk(G_1\oplus G_2)=\dprk(G_1)+\dprk(G_2)-1.\]
\end{corollary}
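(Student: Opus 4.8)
The plan is to use the formula $\dprk(G) = 1 + \sum_{p \in \mathbb{P}_\infty(G)} k_p$ from Proposition \ref{P:dp-rankpoly} applied to $G_1$, $G_2$ and $G_1 \oplus G_2$ separately, and then to show that the relevant combinatorial invariants are additive. First I would note that $G_1 \oplus G_2$ again has finite spines: this follows from Corollary \ref{C:elem. equiv. arhcim grps} (each $G_i$ is elementarily equivalent to a lexicographic sum of archimedean groups, hence so is their direct sum, placing $G_1$ above $G_2$) together with Lemma \ref{L:finspine_lexi}(2), or directly from the characterization via $\mathbb{P}_\infty$ and spines. So Proposition \ref{P:dp-rankpoly} applies to all three groups and it suffices to prove
\[
\sum_{p \in \mathbb{P}_\infty(G_1 \oplus G_2)} k_p(G_1 \oplus G_2) = \sum_{p \in \mathbb{P}_\infty(G_1)} k_p(G_1) + \sum_{p \in \mathbb{P}_\infty(G_2)} k_p(G_2),
\]
since then $\dprk(G_1 \oplus G_2) = 1 + (\text{that sum}) = (\dprk(G_1) - 1) + (\dprk(G_2) - 1) + 1$.

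Next I would fix a prime $p$ and show $k_p(G_1 \oplus G_2) = k_p(G_1) + k_p(G_2)$ (with the convention that $k_p$ vanishes unless $p \in \mathbb{P}_\infty$ of the relevant group, and noting $\mathbb{P}_\infty(G_1 \oplus G_2) = \mathbb{P}_\infty(G_1) \cup \mathbb{P}_\infty(G_2)$ — a quick check, since $[G_1 \oplus G_2 : p(G_1 \oplus G_2)] = [G_1:pG_1]\cdot[G_2:pG_2]$). Recall $k_p$ is the maximal length $n$ of a chain $H_0 \subsetneq \dots \subsetneq H_{n-1} \subsetneq H_n = G$ of definable convex subgroups with each successive quotient having infinite $p$-index. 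The key structural point is that, working in the lexicographic-sum presentation $G_1 \oplus G_2$ (with $G_2$ the bottom part), $G_2$ is itself a definable convex subgroup, and by quantifier elimination it is stably embedded as a pure ordered abelian group; moreover every definable convex subgroup of $G_1 \oplus G_2$ is either contained in $G_2$ or contains $G_2$, and corresponds respectively to a definable convex subgroup of $G_2$ or of $G_1$. Given an optimal chain $H_\bullet$ witnessing $k_p(G_1 \oplus G_2)$, I would split it at $G_2$: insert $G_2$ into the chain if it is not already there (this can only preserve or, a priori, break the infinite-index condition at the splitting point — one must check that inserting $G_2$ does not decrease the count, which follows because if $H_i \subsetneq G_2 \subsetneq H_{i+1}$ then at least one of the two new quotients $G_2/H_i$, $H_{i+1}/G_2$ still has infinite $p$-index, but actually for the upper bound direction we want the reverse — so more carefully, one argues that the portion of the chain below $G_2$ has length $\leq k_p(G_2)$ and the portion above has length $\leq k_p(G_1)$, using that the quotient $(G_1\oplus G_2)/G_2 \cong G_1$ with definable convex subgroups matching). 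Conversely, concatenating optimal chains for $G_1$ and $G_2$ through $G_2$ gives a chain of length $k_p(G_1) + k_p(G_2)$ in $G_1 \oplus G_2$, provided the quotient $G_2/H_{k_p(G_2)-1}$ against $G_1$'s bottom group behaves — here one should use the remark after Lemma \ref{L:kp-witnesses}, namely $k_p = |\mathcal{S}_p^\infty|$, which makes the computation cleaner: $\mathcal{S}_p(G_1 \oplus G_2)$ decomposes, via Lemma \ref{L:finspine_lexi}, as the spine contributions from the non-$p$-divisible archimedean components of $G_1$ and of $G_2$ separately, and the ``infinite-$p$-index successor'' decoration $\mathcal{S}_p^\infty$ also splits along this decomposition, giving $|\mathcal{S}_p^\infty(G_1 \oplus G_2)| = |\mathcal{S}_p^\infty(G_1)| + |\mathcal{S}_p^\infty(G_2)|$ outright.

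I would therefore actually run the argument through the $\mathcal{S}_p^\infty$ description rather than through chains directly. Passing to elementarily equivalent lexicographic sums, write $G_1 \equiv \bigoplus_{i \in I_1} A_i$ and $G_2 \equiv \bigoplus_{i \in I_2} B_i$ with all $A_i, B_i$ non-zero archimedean, and present $G_1 \oplus G_2 \equiv \bigoplus_{i \in I_1 + I_2} C_i$ where $I_1$ sits above $I_2$. By Lemma \ref{L:finspine_lexi}(1), definable convex subgroups of $\bigoplus C_i$ are exactly the tails $\{h : h(k) = 0 \text{ for } k \leq j\}$ for $j$ ranging over an appropriate (finite-spine) set of indices, and an index $j$ contributes to $\mathcal{S}_p$ iff $C_j$ is not $p$-divisible; the element $H_j \in \mathcal{S}_p$ lies in $\mathcal{S}_p^\infty$ iff its successor quotient in $\mathcal{S}_p$-order has infinite $p$-index, which is a condition depending only on the archimedean pieces strictly between $H_j$ and its $\mathcal{S}_p$-successor. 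Since no index in $I_2$ is adjacent, in this sense, to an index in $I_1$ across the "boundary" in a way that mixes the two (the boundary tail $G_2$ itself is definable convex and separates them), the count $|\mathcal{S}_p^\infty|$ is simply the sum of the counts computed inside $\bigoplus_{I_1} A_i$ and $\bigoplus_{I_2} B_i$. The main obstacle — the one place needing genuine care rather than bookkeeping — is precisely this boundary analysis: verifying that the topmost element of $\mathcal{S}_p^\infty(G_2)$ and the structure of $\mathcal{S}_p^\infty(G_1)$ do not interact, i.e. that inserting the separating subgroup $G_2$ neither creates nor destroys an element of $\mathcal{S}_p^\infty$ relative to the disjoint-union count. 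Once that is pinned down, summing over all primes $p$ and adding the single global $1$ from Proposition \ref{P:dp-rankpoly} completes the proof.
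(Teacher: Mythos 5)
Your overall strategy is the right one and matches what the paper has in mind: apply $\dprk(G)=1+\sum_p k_p(G)$ from Proposition \ref{P:dp-rankpoly} to $G_1$, $G_2$ and $G_1\oplus G_2$, and reduce to proving $k_p(G_1\oplus G_2)=k_p(G_1)+k_p(G_2)$ via the $\mathcal{S}_p^\infty$ description and the structure of definable convex subgroups in a lexicographic sum. But as written there is a genuine gap, and it lies exactly at the point you flag as ``the main obstacle'' and then leave unresolved.

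The specific problems are these. First, the assertion that ``the boundary tail $G_2$ itself is definable convex'' is false in general: take $G_1=G_2=\mathbb{Q}$, so that $G_1\oplus G_2\equiv\mathbb{Q}$ and the only definable convex subgroups are $\{0\}$ and the whole group. So you cannot always insert $G_2$ into a witnessing chain, and $\mathcal{S}_p(G_1\oplus G_2)$ need not literally contain a point corresponding to $G_2$. Second, you never actually carry out the boundary analysis; you only say that ``once that is pinned down'' the proof is complete. The crucial fact that makes the count additive is: writing $\mathcal{S}_p(G_1)=\{A_0=\{0\}\subsetneq A_1\subsetneq\cdots\}$ and $\mathcal{S}_p(G_2)=\{B_0\subsetneq\cdots\subsetneq B_{l-1}\}$, if $G_2\notin\mathcal{S}_p(G_1\oplus G_2)$ then $A_1$ is $p$-divisible (since $A_1=\bigoplus_{k>j_1}G_k^{(1)}$ with $j_1$ the largest index of $I_1$ carrying a non-$p$-divisible archimedean piece, and everything above $j_1$ is $p$-divisible). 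Consequently $A_0\notin\mathcal{S}_p^\infty(G_1)$, and the merged boundary quotient $(G_2+A_1)/B_{l-1}$, which satisfies $[(G_2+A_1)/B_{l-1}:p]=[G_2/B_{l-1}:p]\cdot[A_1:pA_1]$, has infinite $p$-index exactly when $G_2/B_{l-1}$ does; thus no element of $\mathcal{S}_p^\infty$ is created or destroyed at the seam. If $G_2\in\mathcal{S}_p(G_1\oplus G_2)$ the spines simply concatenate. You should also either justify that $\oplus$ respects elementary equivalence (so that you may replace $G_1,G_2$ by lexicographic sums of archimedean groups) or, more simply, avoid that step by analysing $H_n(g)$ in $G_1\oplus G_2$ directly: every definable convex subgroup is comparable to $G_2$, those below $G_2$ are exactly the ones definable in $G_2$, and those above correspond to definable convex subgroups of $(G_1\oplus G_2)/G_2\cong G_1$. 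Once these two points are filled in, the proof is correct and is essentially the paper's intended argument.
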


Finally as a direct corollary of Propositions \ref{P:strgdep_implies_A_finite} and \ref{P:dp-rankpoly}:
\begin{theorem}\label{T:main}
Let $G$ be an ordered abelian group. The following are equivalent 
\begin{enumerate}
\item $G$ is strongly dependent;
\item $\dprk(G)<\aleph_0$; 
\item $G$ is with finite spines and $|\mathbb{P}_{\infty}(G)|<\infty$;
\item $G$ is elementary equivalent to a lexicographic sum of non-zero archimedean groups $\bigoplus_{i\in I}G_i$, where
\begin{enumerate}
\item for every prime $p$, $|\{i\in I: pG_i\neq G_i\}|<\infty$ and
\item $[G_i:pG_i]=\infty$ (for some $i\in I$) for only finitely many primes $p$.
\end{enumerate}  
\end{enumerate}
\end{theorem}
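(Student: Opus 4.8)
The plan is to assemble Theorem~\ref{T:main} almost entirely from results already established in Sections~\ref{S:fs} and~\ref{S:cal-dprk}, treating it as a bookkeeping exercise that closes the cycle of implications $(1)\Rightarrow(3)\Rightarrow(2)\Rightarrow(1)$ together with $(3)\Leftrightarrow(4)$. First I would prove $(1)\Rightarrow(3)$: this is exactly the content of Proposition~\ref{P:strgdep_implies_A_finite}, which says that a strongly dependent ordered abelian group has finite spines and that $\mathbb{P}_\infty(G)$ is finite. No extra work is needed here beyond citing it (note the statement even allows additional structure, so a fortiori it applies to the pure group).

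Next, for $(3)\Rightarrow(2)$: assuming $G$ has finite spines and $|\mathbb{P}_\infty(G)|<\infty$, Proposition~\ref{P:dp-rankpoly} gives the exact formula $\dprk(G)=1+\sum_{p\in\mathbb{P}(G)}k_p$, where each $k_p$ is finite by Lemma~\ref{L:kp is finite}, and the sum is finite since $k_p=0$ for $p\notin\mathbb{P}_\infty(G)$ (a prime with $[G:pG]<\infty$ contributes nothing, because $k_p$ counts subquotients of infinite index $p$-defect). Hence $\dprk(G)<\aleph_0$. Then $(2)\Rightarrow(1)$ is immediate and general: finite dp-rank implies strong dependence (this is built into the definitions in Definition~\ref{D:dprk}, since an $\aleph_0$-sized system of mutually indiscernible sequences none indiscernible over $b$ would give infinite dp-rank). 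So the three-way cycle among $(1),(2),(3)$ closes.

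It remains to handle $(3)\Leftrightarrow(4)$. For $(3)\Rightarrow(4)$, Corollary~\ref{C:elem. equiv. arhcim grps} tells us $G$ is elementarily equivalent to a lexicographic sum $\bigoplus_{i\in I}G_i$ of non-zero archimedean groups; I then need to check that the two conditions (a) and (b) in~(4) hold for this particular sum. Condition~(a) — for each prime $p$, only finitely many $i$ have $G_i$ not $p$-divisible — is precisely Lemma~\ref{L:finspine_lexi}(2) applied with the hypothesis that $\mathcal{S}_p$ is finite for all $p$; here I should be slightly careful that the displayed condition in~(4)(a), written ``$pG\neq G$'', is really meant as ``$pG_i\neq G_i$'', i.e.\ matches the ``$G_i$ not $p$-divisible'' of Lemma~\ref{L:finspine_lexi}. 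Condition~(b) — each $G_i$ has $[G_i:pG_i]=\infty$ for only finitely many $p$ — follows from $|\mathbb{P}_\infty(G)|<\infty$ once one observes that for an archimedean summand $G_i$, $[G_i:pG_i]=\infty$ forces $[G:pG]=\infty$ (an $e_i$ with $e_i(j)=\delta_{ij}$ generates infinitely many cosets mod $pG$), so $\{p:[G_i:pG_i]=\infty\}\subseteq\mathbb{P}_\infty(G)$.

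For the converse $(4)\Rightarrow(3)$, since finite spines and $|\mathbb{P}_\infty(G)|<\infty$ are properties of $\mathrm{Th}(G)$, I may replace $G$ by the lexicographic sum $\bigoplus_{i\in I}G_i$ itself. Finite spines is again Lemma~\ref{L:finspine_lexi}(2) read in the other direction: condition~(4)(a) says exactly that $\{i:G_i\text{ not }p\text{-divisible}\}$ is finite, hence $\mathcal{S}_p$ is finite. For $|\mathbb{P}_\infty(G)|<\infty$: by Lemma~\ref{L:finspine_lexi}(1), $[G:pG]=\infty$ happens iff either infinitely many $G_i$ are non-$p$-divisible — excluded by~(4)(a) — or some single $G_i$ has $[G_i:pG_i]=\infty$; by~(4)(b) each $G_i$ has only finitely many such primes, but if infinitely many were needed overall we could still be in trouble, so here I would argue more carefully using finite spines: since $\mathcal{S}_p$ is finite, $[G:pG]=\prod_{\text{subquotients}}[\cdot:p\cdot]$ with finitely many factors, each of which is infinite iff the corresponding $G_i$ has infinite $p$-defect, so $\mathbb{P}_\infty(G)=\bigcup_i\{p:[G_i:pG_i]=\infty\}$ up to the finitely-many-$i$ issue, and~(4)(a)+(4)(b) bound this by a finite union of finite sets. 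The one genuine subtlety — and the step I expect to need the most care — is reconciling the slightly informal phrasing of~(4)(a) (``$pG\neq G$'' where one expects ``$pG_i\neq G_i$'') with the precise statement of Lemma~\ref{L:finspine_lexi}(2), and making sure the quantifier ``for only finitely many primes $p$'' in the passage from per-summand data to global data is handled via the finiteness of $\mathcal{S}_p$ rather than hand-waved; everything else is a direct invocation of the already-proved propositions.
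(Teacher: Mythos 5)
Your proof is correct and relies on the same ingredients as the paper's: Proposition~\ref{P:strgdep_implies_A_finite} for $(1)\Rightarrow(3)$, Proposition~\ref{P:dp-rankpoly} (with Lemma~\ref{L:kp is finite}) for dp-rank finiteness, and Corollary~\ref{C:elem. equiv. arhcim grps} plus Lemma~\ref{L:finspine_lexi} for the lexicographic-sum equivalence; the only difference from the paper is that you close the cycle as $(1)\Rightarrow(3)\Rightarrow(2)\Rightarrow(1)$ plus $(3)\Leftrightarrow(4)$ rather than $(1)\Rightarrow(2)\Rightarrow(3)\Rightarrow(4)\Rightarrow(1)$, which is a cosmetic rearrangement. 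Your observation that $k_p=0$ when $p\notin\mathbb{P}_\infty(G)$ is correct and fills in a step the paper leaves implicit, since $[G/H:p(G/H)]\leq[G:pG]$ for any convex $H$.

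One point deserves a cleaner statement than you gave it. In $(4)\Rightarrow(3)$ your phrase ``a finite union of finite sets'' does not quite parse: $\mathbb{P}_\infty(G)=\bigcup_{i\in I}\{p:[G_i:pG_i]=\infty\}$ is a union over \emph{all} of $I$, which may be infinite, and condition (a) controls a different kind of finiteness (finitely many $i$ per $p$, not finitely many $i$ total). So a per-summand reading of (b) really is too weak — consider $I=\mathbb{N}$, $G_i=\mathbb{Z}_{(p_i)}^{(\omega)}$ with $p_i$ the $i$-th prime: then (a) and per-summand (b) hold, yet $\mathbb{P}_\infty(G)$ is infinite. Condition (b) must therefore be read globally as ``$\{p:\exists i\in I,\ [G_i:pG_i]=\infty\}$ is finite''; with that reading, and using $[G:pG]=\prod_{i}[G_i:pG_i]$ (a finite product by (a), infinite iff some factor is infinite), the implication $(4)\Rightarrow(3)$ goes through immediately, exactly as in the paper. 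You spotted the ambiguity — including the typo ``$pG\neq G$'' for ``$pG_i\neq G_i$'' in (4)(a) — which is the most delicate point of the whole theorem, so this is a matter of sharpening the resolution rather than a genuine gap.
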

\begin{proof}
$(1)\Rightarrow (2)$ If $G$ is strongly dependent, then by Proposition \ref{P:strgdep_implies_A_finite}, $G$ has finite spines and $\mathbb{P}_\infty(G)$ is finite and thus $\dprk(G)$ is finite by Proposition \ref{P:dp-rankpoly}.

$(2)\Rightarrow (3)$ Since every structure of finite dp-rank is strongly dependent (see Definition \ref{D:dprk}), the result follows from Proposition \ref{P:strgdep_implies_A_finite}.

$(3)\Rightarrow (4)$ Every ordered abelian group with finite spines is elementary equivalent to a lexicographic sum of non zero archimedean groups by Corollary \ref{C:elem. equiv. arhcim grps}. The rest follows from the analysis in Lemma \ref{L:finspine_lexi}.

$(4)\Rightarrow (1)$ Again, by Lemma \ref{L:finspine_lexi} it is easily seen that $G$ has finite spines and that $\mathbb{P}_\infty (G)$ is finite. Thus by Proposition \ref{P:dp-rankpoly}, $G$ has finite dp-rank and thus strongly dependent.
\end{proof}

The following is now easy: 
\begin{corollary}\label{C:induction}
	Let $G$ be an ordered abelian group, $H\le G$ a convex subgroup. If $G/H$ and $H$ are strongly dependent as pure ordered abelian groups then so is $G$. 
\end{corollary}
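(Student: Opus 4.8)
The plan is to reduce Corollary~\ref{C:induction} to the characterization of strong dependence via finite spines given in Theorem~\ref{T:main}, i.e.\ to condition (3): it suffices to show that if $H$ and $G/H$ each have finite spines and finite $\mathbb{P}_\infty$, then the same holds for $G$. For the $\mathbb{P}_\infty$ part, note that for every prime $p$ there is a short exact sequence relating $pG$, $pH$ and $p(G/H)$, giving a surjection $G/(H+pG)\twoheadrightarrow (G/H)/p(G/H)$ with kernel $H/(H\cap pG)=H/pH$ (using Lemma~\ref{L:intersection of groups}(2), since $H$ is convex, to identify $H\cap pG=pH$). Hence $[G:pG]$ is finite as soon as both $[H:pH]$ and $[G/H:p(G/H)]$ are finite, so $\mathbb{P}_\infty(G)\subseteq \mathbb{P}_\infty(H)\cup\mathbb{P}_\infty(G/H)$ is finite.

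For the finite-spines part, I would work with the chain of convex subgroups of $G$. The convex subgroups of $G$ containing $H$ correspond exactly to the convex subgroups of $G/H$, and the convex subgroups of $G$ contained in $H$ are exactly the convex subgroups of $H$; every convex subgroup of $G$ is comparable to $H$ since convex subgroups of an ordered abelian group are linearly ordered by inclusion. By Lemma~\ref{L:p-div-element-in-Sp}, $\mathcal{S}_p(G)$ being infinite is equivalent to the existence of an infinite chain of convex subgroups $K_0\subsetneq K_1\subsetneq\cdots$ of $G$ with each $K_{i+1}/K_i$ not $p$-divisible. Split such a putative chain at $H$: all but at most one member lies either entirely below $H$ or entirely above $H$, so one of the two "halves" is infinite. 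A chain below $H$ gives an infinite chain of convex subgroups of $H$ with non-$p$-divisible successive quotients, forcing $\mathcal{S}_p(H)$ infinite; a chain above $H$ projects to an infinite such chain in $G/H$ (here one uses that $(K_{i+1}/H)/(K_i/H)\cong K_{i+1}/K_i$ is not $p$-divisible), forcing $\mathcal{S}_p(G/H)$ infinite. Either way we contradict finite spines of $H$ or $G/H$, so $\mathcal{S}_p(G)$ is finite for every $p$.

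Combining the two paragraphs, $G$ satisfies condition (3) of Theorem~\ref{T:main}, hence is strongly dependent, which is condition (1). The main subtlety — though it is minor — is the bookkeeping around the single convex subgroup of the chain that might straddle $H$ (i.e.\ have $K_i \subseteq H \subsetneq K_{i+1}$ or lie strictly between consecutive members), and making sure that discarding finitely many terms of an infinite chain still leaves an infinite chain with the required non-divisibility of quotients; this is handled simply by passing to a cofinal/coinitial subchain on the appropriate side. Everything else is the standard correspondence between convex subgroups of $G$, $H$, and $G/H$, together with Lemmas~\ref{L:intersection of groups}(2) and~\ref{L:p-div-element-in-Sp}.
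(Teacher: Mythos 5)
Your proposal is correct and takes essentially the same approach as the paper's (two-sentence) proof: both reduce to condition (3) of Theorem~\ref{T:main} by splitting the convex subgroups of $G$ at $H$, using $H\cap pG=pH$ for the $\mathbb{P}_\infty$ count and the correspondence of spines. Your chain argument via Lemma~\ref{L:p-div-element-in-Sp} is just an unpacking of the paper's assertion that the $p$-spine of $G$ is the ordered union of those of $H$ and $G/H$.
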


\begin{proof}
	We readily get that $\mathbb{P}_\infty(G)=\mathbb{P}_\infty(H)+\mathbb{P}_\infty(G/H)$. Similarly, the $p$-spine of $G$ is naturally isomorphic to the ordered union of the $p$-spine of $H$ and the $p$-spine of $G/H$. 
\end{proof}

\section{Strongly dependent henselian fields}\label{fields}
As an application of our results on strongly dependent ordered abelian groups we show that if $(K,v)$ is henselian, with $K$ strongly dependent (as a pure field) then $(K,v)$ is strongly dependent. The heart of the proof, and the main new ingredient, will be showing that the value group $vK$ is strongly dependent.   To conclude we adapt a transfer theorem (due, essentially, to Jahnke, \cite{JahNIP}, after Johnson, \cite{johnson}) to the strongly dependent setting. For a valued field $(K,v)$ we denote by $vK$ its value group, $Kv$ its residue field and $\mathcal{O}_v$ its valuation ring. All other standard valuation theoretic terminology used in this section can be found in any textbook on the subject, e.g, \cite{FVK} or \cite{EnPr}.

The following fact will be used repeatedly
\begin{fact}\cite[Proof of Claim 5.40]{strongdep}\label{F:perfect}
Every strongly dependent field is perfect.
\end{fact}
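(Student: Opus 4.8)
The statement is quoted from \cite{strongdep}, so the plan is to reconstruct its proof. Since fields of characteristic $0$ are perfect, it suffices to treat the characteristic $p>0$ case, and we argue by contraposition: if $K$ has characteristic $p$ and $K\neq K^p$, we produce an inp-pattern of depth $\aleph_0$ with mutually indiscernible rows, which by Definition \ref{D:dprk}(5) (in the mutually-indiscernible reformulation noted in the Remark after it) shows $K$ is not strongly dependent. First fix $a\in K\setminus K^p$ and pass to a monster $\mathfrak C\models \mathrm{Th}(K)$. Since the assertions ``$[K:K^p]\geq p^n$'' and, for a fixed parameter, ``$a\in K^p$'' are first order, we get $[\mathfrak C:\mathfrak C^p]=[K:K^p]$ (as an element of $\{p^n:n\in\mathbb N\}\cup\{\infty\}$) and $a\notin\mathfrak C^p$; in particular $X^p-a^p$ is the minimal polynomial of $a$ over $\mathfrak C^p$, so $1,a,\dots,a^{p-1}$ are $\mathfrak C^p$-linearly independent, hence $1,a,\dots,a^{p^n-1}$ are $\mathfrak C^{p^n}$-linearly independent for every $n$. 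Throughout we use that $\mathfrak C^{p^n}$ and the maps ``multiplication by $a^k$'' are definable, so every subgroup of $(\mathfrak C,+)$ built from these is definable.

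The construction of the pattern splits according to whether the imperfection degree is finite. \textbf{Case} $[K:K^p]<\infty$. Then $\mathfrak C^{p^n}$ has \emph{finite} index in $\mathfrak C$ for every $n$, so cosets of Frobenius powers have too few disjoint translates; instead fix a finite $\mathfrak C^p$-basis $\{1=b_0,b_1,\dots,b_{p^e-1}\}$ of $\mathfrak C$ (a $K^p$-basis of $K$ works, by elementarity), and let $\lambda_0,\lambda_1\colon\mathfrak C\to\mathfrak C$ be the definable coordinate functions reading off the $b_0$- and $b_1$-coefficients in $x=\sum_i\lambda_i(x)^p b_i$. Each $\lambda_i$ is additive and surjective, so $g_n:=\lambda_1\circ\lambda_0^{\,n-1}\colon\mathfrak C\to\mathfrak C$ (for $n\geq 1$) is a definable additive surjection, whence $\ker g_n$ is a definable subgroup of infinite index. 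Row $n$ of the pattern is $\{\,x-b^n_j\in\ker g_n\,\}_{j<\omega}$, i.e.\ $\{\,g_n(x)=g_n(b^n_j)\,\}_{j<\omega}$, with the $b^n_j$ chosen so that the values $g_n(b^n_j)$ are pairwise distinct; each row is $2$-inconsistent. Consistency of a path reduces to: given $v_1,\dots,v_m\in\mathfrak C$ there is $x$ with $g_n(x)=v_n$ for $n\le m$ — which holds because the ``digits at different depths'' are freely prescribable: set $x_m:=v_m^p b_1$ and recursively $x_{n}:=x_{n+1}^p+v_n^p b_1$, so that $x:=x_1$ satisfies $\lambda_0^{\,n-1}(x)=x_n$ and $\lambda_1(x_n)=v_n$. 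By compactness this yields an inp-pattern on the rows $(g_n)_{n<\omega}$ of depth $\aleph_0$, which we may take with mutually indiscernible rows by a standard Ramsey argument. \textbf{Case} $[K:K^p]=\infty$. Now $A_n:=a^{p^n}\mathfrak C^{p^{n+1}}$ is a definable subgroup with $[\mathfrak C:A_n]=[\mathfrak C:\mathfrak C^{p^{n+1}}]$, which is infinite since $[\mathfrak C:\mathfrak C^p]=\infty$. Taking row $n$ to be $\{\,x-b^n_j\in A_n\,\}_{j<\omega}$ with the $b^n_j$ in distinct $A_n$-cosets, each row is again $2$-inconsistent; and a finite collection of cosets $c_i+A_{n_i}$ has a common point because the $A_n$ are ``independent'': in the decomposition $\mathfrak C\supseteq\bigoplus_{k<p^N}a^k\mathfrak C^{p^N}$ the subgroup $A_n$ surjects onto exactly the coordinates with $p$-adic valuation $n$, so distinct $A_n$ occupy disjoint coordinate slots and $\mathfrak C\to\prod_i\mathfrak C/A_{n_i}$ is onto, which gives consistency of all paths. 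Again we obtain an inp-pattern of depth $\aleph_0$ with mutually indiscernible rows.

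In either case $K$ is not strongly dependent, proving the contrapositive in characteristic $p$; together with the trivial characteristic $0$ case this gives Fact \ref{F:perfect}.

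I expect the main obstacle to be exactly this case distinction: one needs \emph{infinite}-index definable subgroups (to obtain $\omega$-many disjoint cosets in each row), and the Frobenius subgroups $\mathfrak C^{p^n}$ have infinite index only when the imperfection degree is infinite — so in the finite-imperfection case one is forced onto the definable coordinate (``digit'') functions and the iterated-digit trick, whose surjectivity and the free prescribability of digits at distinct depths are the delicate points. The remaining verifications are routine: elementarity of the statements used to transfer to $\mathfrak C$; additivity and surjectivity of the $\lambda_i$; and the ``independence'' of the $A_n$ underlying consistency of paths in the infinite-imperfection case.
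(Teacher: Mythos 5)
The statement is cited from Shelah, so the paper gives no proof of its own; what follows assesses your reconstruction.

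Your characteristic-$0$ reduction and the finite imperfection degree case are essentially correct: the definable coordinate (``digit'') maps $\lambda_0,\lambda_1$ are additive surjections, the recursion $x_m:=v_m^pb_1$, $x_n:=x_{n+1}^p+v_n^pb_1$ does realise any finite path, and each row is $2$-inconsistent because $g_n$ is onto. One factual slip in the motivation for the case split: you assert that $\mathfrak C^{p^n}$ has \emph{finite} additive index when $[K:K^p]<\infty$. That is false: as additive groups $K/K^p\cong (K^p)^{p^e-1}$, so $[\mathfrak C:\mathfrak C^{p^n}]$ is infinite whenever $K$ is infinite and imperfect, in both cases. The case split is nonetheless necessary, but for a different reason — only when the $\mathfrak C^p$-basis is finite are the coordinate functions $\lambda_i$ definable.

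The genuine gap is in the infinite imperfection degree case. The subgroups $A_n:=a^{p^n}\mathfrak C^{p^{n+1}}$ are \emph{too} independent: for $n<m$, $A_n\cap A_m=\{0\}$. Indeed, a nonzero $x=a^{p^n}c^{p^{n+1}}=a^{p^m}d^{p^{m+1}}$ forces $a^{p^{m-n}-1}$ to be a $p$-th power after taking $p^n$-th roots, and since $\gcd(p^{m-n}-1,p)=1$ this gives $a\in\mathfrak C^p$, a contradiction. Consequently the index condition of Fact~\ref{F:dp-1bsd} fails for any three of them: with $F=\{n_1,n_2,n_3\}$ and $i_0=n_1$ we get $\left[A_{n_2}\cap A_{n_3}:\bigcap_j A_{n_j}\right]=[\{0\}:\{0\}]=1$, so these groups cannot witness an inp-pattern of depth $\geq 3$. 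Your stated justification — that $\mathfrak C\to\prod_i\mathfrak C/A_{n_i}$ is onto — is also false already for two factors: $1\in A_0+A_1$ would mean $1=ac^p+(ad^p)^p$, which in the $\mathfrak C^p$-decomposition along $\{1,a,\dots\}$ forces $c=0$ and $ad^p=1$, hence $a\in\mathfrak C^p$. The ``disjoint coordinate slots'' heuristic actually works against you here: it makes the $A_n$ linearly disjoint, so their finite intersections collapse to $\{0\}$ rather than producing infinite relative indices. The infinite case therefore needs a different family of definable subgroups (or a reduction to the finite case), and as written the argument does not go through.
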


First we show that the residue field must be strongly dependent, hence perfect.

\begin{proposition}\label{P:res-field-strong}
Let $K$ be a strongly dependent field and let $v$ be a henselian valuation on $K$. Then $Kv$ is strongly dependent.
\end{proposition}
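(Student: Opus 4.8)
The plan is to reduce the statement to a known transfer principle for strong dependence along henselian valuations. First I would recall that if $K$ is strongly dependent, then $K$ is in particular dependent (NIP), so by the Ax--Kochen--Ershov style transfer results for NIP henselian valued fields (e.g.\ \cite{DelHenselian}, \cite{belair}, or more recently \cite{JaSiTransfer}), applicable since $v$ is henselian and we are free to assume we are not in a degenerate mixed-characteristic pathology, the dependence of $(K,v)$ is controlled by that of the residue field and value group; in particular, to show $Kv$ is strongly dependent it suffices to show it cannot carry ``too much'' complexity relative to $K$.

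The cleaner route, and the one I would actually pursue, is direct: the residue field $Kv$ is interpretable in $(K,v)$, and more to the point, I want to show that an inp-pattern of depth $\kappa$ in $Kv$ lifts to one of depth $\kappa$ in $K$ itself (as a pure field). The key point is that $\CO_v$, being henselian, has a good section-like structure on residues: given definable sets in $Kv$ witnessing an inp-pattern, one pulls them back to $\CO_v$ via the residue map, and then uses that multiplicative or additive translates in $K$ realize the required mutually indiscernible arrays. Concretely, since $Kv$ is a quotient of $\CO_v$, any formula $\varphi(x,y)$ over $Kv$ is of the form $\psi(\bar x, \bar y)$ for $\psi$ a formula over $\CO_v$ composed with the residue map; an array $(b_i^\alpha)$ in $Kv$ witnessing $k^\alpha$-inconsistency of rows and consistency of paths lifts to any array of representatives $(\tilde b_i^\alpha)$ in $\CO_v$, and consistency of a path $\{\psi(\bar x, \tilde b_{\eta(\alpha)}^\alpha)\}$ follows by henselianity from consistency of the residued path together with the fact that $\CO_v$ surjects onto $Kv$. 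Inconsistency of rows is preserved because reduction mod $v$ is a function. This exhibits an inp-pattern of the same depth in $(K,v)$, hence in $K$ after noting that one may choose $\psi$ and the witnessing parameters inside $\CO_v$, a $v$-definable set, and then absorb $v$ via the assumed NIP transfer if one wants a pattern purely in the field language; alternatively one argues directly in $K$ using that $\CO_v$ is henselian and that finite extensions are controlled.

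Having produced, from any inp-pattern of depth $\kappa$ in $Kv$, an inp-pattern of depth $\kappa$ in $K$ (or in $(K,v)$, which is dependent by the transfer principle since $Kv$ is dependent being interpretable in the dependent structure $K$, wait --- we need $Kv$ dependent first, which is immediate: $Kv$ is interpretable in $(K,v)$, and $(K,v)$ is dependent by the NIP transfer theorem applied with $K$ dependent), we conclude: $\dprk(K) \geq \dprk(Kv)$, so strong dependence of $K$ forces strong dependence of $Kv$.

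The main obstacle I anticipate is the lifting step --- specifically, verifying that consistency of a path of residued conditions genuinely lifts to consistency of a path of field-level conditions. This is where henselianity is essential: one needs that a system of polynomial (or more general first-order) conditions solvable modulo $\mathfrak{m}_v$ with suitably non-degenerate derivative is solvable in $\CO_v$. For pure NIP this is standard, but care is needed to make the witnessing array genuinely \emph{mutually indiscernible} at the field level rather than merely at the residue level; I would handle this by first extracting mutually indiscernible field-level representatives using Ramsey/compactness, and only then reducing mod $v$, so that the good array is chosen in $K$ from the outset and its residue automatically inherits the inp-pattern structure. A secondary subtlety is ensuring the argument does not secretly require that $Kv$ be stably embedded or that $v$ be definable --- it should not, since we only need the one-directional inequality $\dprk(K) \geq \dprk(Kv)$, and for that the interpretability of $Kv$ in $(K,v)$ together with the NIP-transfer-provided fact that $\dprk(K,v) = \dprk(K)$ (or at least that both are finite simultaneously) suffices.
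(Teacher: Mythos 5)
Your central claim --- that an inp-pattern in $Kv$ lifts to one of the same depth in $K$ as a pure field --- is where the argument breaks, and the difficulty is exactly the one you wave away in your final sentence. The residue map is definable in $(K,v)$, not in $K$, so lifting an inp-pattern through it only produces an inp-pattern in $(K,v)$. That gives $\dprk(K,v) \geq \dprk(Kv)$ (which is automatic from interpretability and needs no henselianity), but it does \emph{not} give $\dprk(K) \geq \dprk(Kv)$. To bridge the gap you need to know that adding $v$ to the language does not increase the dp-rank, i.e.\ that $v$ is definable in $K$, or at least externally definable (definable in $K^{sh}$, the Shelah expansion). This is precisely the content you claim is ``not secretly required,'' but it is in fact the entire crux.

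Your attempt to fill this hole by ``NIP transfer'' is circular. The transfer theorems (Delon, B\'elair, Jahnke--Simon) run from the residue field and value group to the valued field: they say that if $Kv$ and $vK$ are dependent (under suitable side conditions) then $(K,v)$ is dependent. They do not say that $K$ dependent as a pure field implies $(K,v)$ dependent --- that statement, for strong dependence, is exactly Theorem \ref{T:fields} of the paper, whose proof relies on the very proposition you are trying to prove. So invoking ``NIP transfer applied with $K$ dependent'' to conclude $(K,v)$ dependent and then $Kv$ dependent is reasoning in a circle.

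The paper's proof handles the definability issue head-on via a dichotomy on $Kv$. If $Kv$ is not separably closed, then by Jahnke's theorem $v$ is definable in $K^{sh}$; since strong dependence passes to the Shelah expansion, $(K^{sh},v)$ and hence $Kv$ are strongly dependent. If $Kv$ is separably closed and perfect, it is algebraically closed and trivially strongly dependent. If $Kv$ is separably closed but imperfect, Scanlon's argument makes $v$ definable in $K$ itself, whence $(K,v)$ and then $Kv$ are strongly dependent; but strongly dependent fields are perfect, a contradiction, so this case is vacuous. Notice that in every non-trivial branch the argument manufactures (internal or external) definability of $v$ before it can say anything about $Kv$. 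That step is missing from your proposal, and without it the inequality $\dprk(K)\geq\dprk(Kv)$ that you rely on is simply unavailable.
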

\begin{proof}
If $Kv$ is not separably closed, then $v$ is definable in $K^{sh}$ the Shelah expansion of $K$ (\cite[Theorem A]{JahNIP}), and as $K^{sh}$ is strongly dependent so is $Kv$. 

If $Kv$ is separably closed and perfect it is algebraically closed and hence strongly dependent. If it is not perfect then by an argument of Scanlon's \cite[Proposition 3.7]{JahNIP} $v$ is definable in $K$ and hence $(K,v)$ is strongly dependent, so that $Kv$ is perfect, a contradiction.
\end{proof}

Dealing with the value group is more complicated. The valuation itself may not be definable but under mild assumptions Theorem \ref{T:main} allows us to find a definable (non-trivial) coarsening of it. We need the following:

\begin{definition}\cite{coarsening}
Let $G$ be an ordered abelian group and $p$ a prime. Then, $G$ is \emph{$p$-antiregular} if no non-trivial quotient of $G$ is $p$-divisible and $G$ has no rank one quotient. 
%Furthermore, $G$ is \emph{antiregular} if it is $p$-antiregular for all primes $p$
\end{definition}
\begin{remark}
$p$-antiregularity is an elementary property of $G$, see \cite[Section 3]{coarsening}.
\end{remark}

\begin{proposition}\label{P:antireg}
Let $G$ be a non-divisible ordered abelian group with finite spines. Then there exists a prime $p$ such that $G$ is not $p$-divisible and not $p$-antiregular.
\end{proposition}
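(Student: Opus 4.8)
The plan is to exploit the characterization of groups with finite spines as lexicographic sums of archimedean groups and then analyze $p$-antiregularity at the level of the pieces. First I would note, using Corollary \ref{C:elem. equiv. arhcim grps}, that since $p$-divisibility and $p$-antiregularity are elementary (the latter by the cited remark), we may assume $G=\bigoplus_{i\in I}G_i$ is an honest lexicographic sum of non-zero archimedean groups. Since $G$ is non-divisible, there is a prime $p$ and an index $i$ with $G_i$ not $p$-divisible; by Lemma \ref{L:finspine_lexi}(2) there are only finitely many such $i$, so we may pick such a prime $p$ together with the largest index $i_0\in I$ for which $G_{i_0}$ is not $p$-divisible. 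This $p$ is the candidate witness.

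Next I would verify that $G$ fails to be $p$-antiregular for this $p$. By definition of $p$-antiregularity, it suffices to produce either a non-trivial $p$-divisible quotient of $G$ or a rank-one quotient. Using Lemma \ref{L:finspine_lexi}(1) (or just the explicit description of convex subgroups of a lexicographic sum of archimedean groups used throughout Section \ref{S:fs}), the convex subgroup $H=\bigoplus_{j>i_0}G_j$ (equivalently, the convex subgroup ``below'' $i_0$) is definable, and the quotient $G/H$ is the lexicographic sum $\bigoplus_{j\le i_0}G_j$ over the initial segment up to and including $i_0$. By maximality of $i_0$, every $G_j$ with $j<i_0$ (in the ordering where smaller indices sit lower, matching the convention $H_\alpha=\bigoplus_{j\le\alpha^-}0\oplus\bigoplus_{j>\alpha^-}G_j$ of Proposition \ref{P:definable-groups}) appearing strictly above the first non-$p$-divisible level is $p$-divisible, so the quotient of $G/H$ by the smallest non-trivial convex subgroup is $p$-divisible; one then checks that either this produces a non-trivial $p$-divisible proper quotient of $G$, or, in the degenerate case where $i_0$ is the only relevant index and the quotient is archimedean, $G/H$ is itself rank one. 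Either way $G$ is not $p$-antiregular. It remains only to observe that $G$ is not $p$-divisible for this same $p$, which is immediate since $G_{i_0}$ is not $p$-divisible and $G$ surjects onto it (it is a direct summand).

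The one point requiring care — and the likely main obstacle — is bookkeeping the direction of the order on the index set $I$ versus the nesting of convex subgroups, and correctly isolating which quotient is $p$-divisible versus rank one: I want the quotient by a convex subgroup to strip away the ``low'' part and retain $G_{i_0}$ together with everything $p$-divisible above it, so that the resulting quotient either visibly has a $p$-divisible proper quotient (when there is genuine structure above $i_0$) or is rank one (when there is not). Concretely, one sets $G'=G/H$ where $H$ is the largest convex subgroup with $G_{i_0}\not\subseteq H$ as a summand, so $G'$ has $i_0$ as its bottom archimedean component; then the convex subgroup $H'\le G'$ generated by $G_{i_0}$ satisfies $G'/H'$ is $p$-divisible (all components of $G'/H'$ lie strictly above $i_0$, hence are $p$-divisible by maximality of $i_0$). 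If $H'=G'$ then $G'$ is archimedean, hence a rank-one quotient of $G$, witnessing failure of $p$-antiregularity; if $H'\subsetneq G'$ then $G'/H'$ is a non-trivial $p$-divisible quotient of $G$, again witnessing the failure. Combined with non-$p$-divisibility of $G$, this completes the proof.
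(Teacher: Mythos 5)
Your overall strategy — reduce via Corollary \ref{C:elem. equiv. arhcim grps} to a lexicographic sum $G=\bigoplus_{i\in I}G_i$ of non-zero archimedean groups, then exhibit either a non-trivial $p$-divisible quotient or a rank-one quotient — is exactly the paper's, but you pick the wrong extremum of the index set, and the resulting argument genuinely fails.

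Concretely: you take $i_0$ to be the \emph{largest} index in $I$ with $G_{i_0}$ not $p$-divisible, set $H=\bigoplus_{j>i_0}G_j$ and $G'=G/H\cong\bigoplus_{j\le i_0}G_j$, and let $H'\le G'$ be the convex subgroup generated by $G_{i_0}$, which (since $i_0$ is the tail index of $G'$) is just $G_{i_0}$ itself; thus $G'/H'\cong\bigoplus_{j<i_0}G_j$. You then assert that $G'/H'$ is $p$-divisible. But maximality of $i_0$ only controls the components $G_j$ with $j>i_0$ — precisely the ones living inside $H$, which were discarded — and says nothing about $G_j$ for $j<i_0$. Those can perfectly well fail to be $p$-divisible. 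For instance, take $G=\mathbb{Z}\oplus\mathbb{Q}\oplus\mathbb{Z}$ (lexicographic, indices $1<2<3$) with $p=2$: your $i_0=3$, $H=0$, $H'=G_3$, and $G'/H'\cong\mathbb{Z}\oplus\mathbb{Q}$, which is neither $2$-divisible nor rank one, so neither horn of your dichotomy fires — even though the proposition's conclusion does hold for this $G$ (the quotient $G/(G_2\oplus G_3)\cong\mathbb{Z}$ is rank one).

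The repair is to take $i_0$ to be the \emph{smallest} index with $G_{i_0}$ not $p$-divisible. This is the index attached to the \emph{maximal} element of $\mathcal{S}_p$: recall $\alpha\le\alpha'$ in $\mathcal{S}_p$ iff $G_\alpha\subseteq G_{\alpha'}$, and $G_\alpha=\bigoplus_{j>\alpha^-}G_j$, so the spine order reverses the index order — this inversion is most likely where the slip crept in, and it is echoed in your phrase ``by maximality of $i_0$'' when what is actually needed is minimality of $i_0$ in $I$. With $i_0$ minimal, every $G_j$ with $j<i_0$ is $p$-divisible, so $G'/H'\cong\bigoplus_{j<i_0}G_j$ is a $p$-divisible quotient of $G$ whenever it is non-trivial, and when it is trivial $G'\cong G_{i_0}$ is an archimedean (rank-one) quotient. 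In the example above the corrected $i_0=1$ gives $H=\mathbb{Q}\oplus\mathbb{Z}$ and $G/H\cong\mathbb{Z}$, the rank-one quotient. With this one-line change your argument coincides with the proof in the paper.
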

\begin{proof}
By the above remark and by Corollary \ref{C:elem. equiv. arhcim grps} we may assume that $G=\bigoplus_{i\in I}G_i$ where all the $G_i$ are non-zero archimedean groups. Let $p$ be a prime with $G$ not $p$-divisible. Since $G$ has finite spines, $\mathcal{S}_p$ is finite and hence there is a maximal element $\alpha\in \mathcal{S}_p$. Let $g\in G$ be such that $\mathfrak{s}_p(g)=\alpha$ (i.e. $G_\alpha=H_p(g)$). By Lemma \ref{L:finspine_lexi}(1) we may assume that $|supp(g)|=1$, so if $supp(g)=\{i_0\}$ then  $g(i_0)\notin pG$ and \[H_p(g)=\bigoplus_{j\leq i_0}0\oplus \bigoplus_{j>i_0} G_j.\]
Define the following convex subgroup: \[H=\bigoplus_{j< i_0}0\oplus \bigoplus_{j\geq i_0} G_j.\]

Aiming for a contradiction, assume that $G$ is $p$-antiregular. If $G=H$ then $G/H_p(g)=G_{i_0}$, which is rank one, contradiction. Otherwise, by maximality of $H_p(g)$ and Lemma \ref{L:p-div-element-in-Sp}, $G/H$ is $p$-divisible, contradiction.

%So $G/H$ is not of finite rank, implying that $|\{j:j<i_0\}|\geq 1$. By maximality of $H_p(g)$, necessarily $G/H$ is $p$-divisible, a contradiction.
\end{proof}

Recall that if $(K,v)$ is a valued field and $u$ is a coarsening of $v$ then there exists a convex subgroup $\Delta\leq vK$ such that $uK\cong vK/\Delta$. In this situation $v$ induces a valuation $\bar{v}$ on $Ku$ with valuation ring $\{xu:x\in \mathcal{O}_v\}$, where $xu$ is the residue of $x$ in the valued field $(K,u)$, and there exists an isomorphism $\bar{v}(Ku)\cong \Delta$.

For a field $K$ and a prime $p$, let $K(p)$ be the compositum of all Galois extensions of $K$ of $p$-power degree. A field $K$ is $p$-closed if $K=K(p)$. A valued field $(K,v)$ is called $p$-henselian if $v$ extends uniquely to $K(p)$. If there exists a $p$-henselian valuation with $p$-closed residue field then there exists a unique coarsest $p$-henselian valuation whose residue field is $p$-closed. It is denoted by $v_K^p$ and called the \emph{canonical $p$-henselian valuation}. For more, and the definition of the canonical $p$-henselian valuation, see, e.g., \cite{JahKoeDef}. We can now show:

\begin{proposition}\label{P:coarsening}
	Let $K$ be a strongly dependent field. Assume that $K$ admits some henselian valuation $v$ with $vK$ non-divisible. Then $K$ admits a non-trivial $\0$-definable henselian coarsening $u$ of $v$. Moreover, if $Kv$ is separably closed and $q$ is such that $vK$ is not $q$-divisible, then $u$ may be chosen so that the convex subgroup corresponding to $u$ is $q$-divisible.
\end{proposition}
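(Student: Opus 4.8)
The plan is to split along whether $Kv$ is separably closed. Suppose first $Kv$ is not separably closed. Then by \cite[Theorem A]{JahNIP} the henselian valuation $v$ (or rather its canonical henselian coarsening with non-separably-closed residue field) is $\0$-definable already in $K$; since $vK$ is non-divisible this coarsening is non-trivial — if it were trivial then $v$ itself refines it, forcing $Kv$ separably closed — so we are done in this case without needing the ``moreover'' clause. Now suppose $Kv$ is separably closed, and let $q$ be a prime with $vK$ not $q$-divisible. The idea is to use the canonical $q$-henselian valuation $v_K^q$, which exists and is $\0$-definable by \cite{JahKoeDef} since $(K,v)$ is $q$-henselian with $q$-closed (indeed separably closed) residue field; let $\Delta\le vK$ be the convex subgroup with $v_K^q K\cong vK/\Delta$. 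We want $u:=v_K^q$ to be non-trivial with $\Delta$ $q$-divisible.

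The heart of the argument is Proposition \ref{P:antireg}: since $vK$ is non-divisible with finite spines (it is strongly dependent as a pure OAG — this uses that the whole value group is strongly dependent, which is what Section \ref{fields} is building toward, but at this point we may invoke it, or alternatively run the argument with $vK$ replaced by a reduct and note finite spines follow from Proposition \ref{P:strgdep_implies_A_finite} applied to the value group of a strongly dependent field), there is a prime $p$ with $vK$ not $p$-divisible and not $p$-antiregular. First I would argue we may take $p=q$: the proof of Proposition \ref{P:antireg} works for \emph{any} prime relative to which the group is non-divisible, so applying it with the given $q$ produces the convex subgroup $H$ (in the notation there, $H=\bigoplus_{j<i_0}0\oplus\bigoplus_{j\ge i_0}G_j$) such that either $vK/H$ has rank one or $vK/H$ is $q$-divisible, and the failure of $q$-antiregularity means one of these non-degenerate situations occurs with $H\subsetneq vK$. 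This non-trivial convex subgroup $H$ with $(vK)/H$ either $q$-divisible or rank one is exactly the data the theory of canonical $q$-henselian valuations needs: the coarsening of $v$ by $H$ has $q$-divisible (or rank-one, in which case one refines once more) value group on the residue side, and its residue field, being a residue field of a henselian valuation on a field with separably closed $Kv$, is itself separably closed hence $q$-closed; so this coarsening is $q$-henselian with $q$-closed residue field and therefore refines $v_K^q$, giving $\Delta\subseteq H$ and in particular $v_K^q$ non-trivial. Definability of $v_K^q$ over $\0$ is \cite{JahKoeDef}.

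It remains to check $\Delta$ is $q$-divisible. Here I would use that $v_K^q$ is the \emph{coarsest} $q$-henselian valuation with $q$-closed residue field: $\Delta$ is the largest convex subgroup such that the corresponding coarsening still has $q$-closed residue field, and one shows $(vK)/\Delta$ has no non-trivial $q$-divisible convex subgroup quotient structure forcing $\Delta$ itself to absorb all $q$-divisibility — concretely, in the lexicographic-sum picture $vK=\bigoplus_{i\in I}G_i$ with $\mathcal S_q$ finite, $\Delta$ is the tail $\bigoplus_{j\ge i_0}G_j$ below the bottom-most index where $q$-non-divisibility appears, and everything strictly below that index is $q$-divisible, which is precisely the statement that $\Delta$ (as a subgroup — here $\Delta$ sits at the \emph{top}, i.e. corresponds to coarsening away the $q$-divisible tail) is $q$-divisible; I would make this precise via Lemma \ref{L:finspine_lexi}(1) and Lemma \ref{L:p-div-element-in-Sp}, matching the maximal element of $\mathcal S_q$ to the index $i_0$ exactly as in the proof of Proposition \ref{P:antireg}. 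The main obstacle I anticipate is bookkeeping the direction of ``coarsening'' correctly — a coarsening of $v$ corresponds to passing to a \emph{quotient} $vK/\Delta$, so $\Delta$ is a convex subgroup near the top of $vK$ and ``$\Delta$ is $q$-divisible'' must be read as divisibility of that upper convex subgroup — and verifying that the canonical $q$-henselian valuation genuinely lands on the convex subgroup extracted from Proposition \ref{P:antireg} rather than something strictly larger; this likely requires invoking the comparison theory between $q$-henselian valuations and canonical ones from \cite{JahKoeDef} together with the finiteness of $\mathcal S_q$ to rule out intermediate convex subgroups.
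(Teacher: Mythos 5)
Your proposal diverges from the paper's proof in both cases and contains genuine gaps.

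\textbf{The case $Kv$ not separably closed.} You claim that \cite[Theorem A]{JahNIP} gives $\emptyset$-definability in $K$ of $v$ (or a coarsening). It does not: that theorem gives definability in the Shelah expansion $K^{sh}$, which is weaker, and the statement you are proving specifically demands $\emptyset$-definability in $K$ itself. The paper handles this case by a different route: as in Proposition \ref{P:res-field-strong}, $(K,v)$ is strongly dependent (via the $K^{sh}$-definability from Theorem A), hence $vK$ is strongly dependent, hence by Proposition \ref{P:antireg} is not $p$-divisible and not $p$-antiregular for some $p$, and then \cite[Corollary 3.7]{coarsening} supplies the $\emptyset$-definable non-trivial henselian coarsening of $v$. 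The antiregularity input is essential precisely to produce a coarsening definable in $K$ rather than in $K^{sh}$.

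\textbf{The case $Kv$ separably closed.} Your plan of combining $v_K^q$ with Proposition \ref{P:antireg} introduces a circularity you yourself flag but do not resolve: to invoke Proposition \ref{P:antireg} you need $vK$ to have finite spines, which in the paper comes from $vK$ being strongly dependent; but when $Kv$ is separably closed we do not yet know $(K,v)$ is strongly dependent (that is exactly what Proposition \ref{P:value-grp-strong}, which uses the present proposition, is designed to show). Proposition \ref{P:strgdep_implies_A_finite} cannot be applied to $vK$ directly either, since it assumes the group is strongly dependent, and $vK$ is not interpretable in $K$ when $v$ is not definable. The paper's proof of this case avoids the antiregularity argument altogether. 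It argues directly: $vK$ not $q$-divisible forces $K\ne K(q)$, hence $v_K^q$ is non-trivial; and then $q$-divisibility of $\Delta=\bar v(Kv_K^q)$ follows because $Kv_K^q=Kv_K^q(q)$ (once one rules out $Kv_K^q$ being real closed via \cite[Lemma 4.3.6]{EnPr}), and any valuation on a $q$-closed field has $q$-divisible value group. Your lexicographic-sum argument for $q$-divisibility is an attempt to re-derive this by hand and is not made precise.

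There are two further local errors in your sketch of the separably closed case. First, the inclusion direction: $v_K^q$ being \emph{coarser} than the coarsening $w$ corresponding to $H$ means the convex subgroup $\Delta$ of $v_K^q$ satisfies $\Delta\supseteq H$, not $\Delta\subseteq H$; in particular this comparison by itself does not yield non-triviality of $v_K^q$, since a priori $\Delta$ could equal $vK$. Second, you assert that the residue field $Kw$ is separably closed (hence $q$-closed) because $Kv$ is; this is false in general: $Kw$ carries a henselian valuation $\bar v$ with separably closed residue field $Kv$, but that does not make $Kw$ itself separably closed (e.g.\ $\mathbb C((t))$ has algebraically closed residue field but is not separably closed). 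Finally, your proposal omits the step the paper takes to handle the case $\zeta_q\notin K$, where one passes to $L=K(\zeta_q)$ before invoking \cite[Theorem 2.7]{JahKoeDef}.
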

\begin{proof}
	$K$ is necessarily not separably closed, otherwise, together with Fact \ref{F:perfect} we would get that $vK$ is divisible. 
	\begin{itemize}
	\item[Case 1:] If the residue field is separably closed, and hence algebraically closed by Fact \ref{F:perfect} and Proposition \ref{P:res-field-strong}, then $K$ admits a $\0$-definable non-trivial henselian valuation by \cite[Theorem 3.10]{JahKoeDef}. As the result we care about (i.e. that we actually get a coarsening) appears only in the proof of that theorem (not in its statement) we give the details: 
	
	Let $q$ be such that $vK$ is not $q$-divisible, (so $q$ is different from the characteristic of $K$). As $Kv$ is algebraically closed, by definition, the canonical $q$-henselian valuation has a $q$-closed residue field. As $K\neq K(q)$ (since $vK$ is not $q$-divisible) it is also non-trivial (see \cite[Section 2.2]{JahKoeDef}). Denote it by $v_K^q$.  It is coarser than $v$, and -- by definition -- also coarser than the canonical henselian valuation on $K$. 
	
	If $K$ contains a primitive $q^{\text{th}}$ root of unity, then $v_K^q$ is a $\0$-definable coarsening of $v$ (\cite[Theorem 2.7]{JahKoeDef}). If $K$ does not contain a primitive $q^{\text{th}}$ root of unity, we repeat the same argument with $L:=K(\zeta_q)$ to obtain a $\0$-definable $u$ coarsening the unique extension of $v$ to $L$. Since $L$ is a $\0$-definable extension, $u|K$ is a $\0$-definable coarsening of $v$. 
	
	Finally, if $v_L^q$ is the canonical $q$-henselian valuation on $L$ then by definition $Lv_L^q=Lv_L^q(q)$. Since $[L:K]<\infty$ and $v_L^q$ extends $v_K^q$ we get that $[Lv_L^q:Kv_K^q]<\infty$. Note that $Kv_K^q$ is not real closed. Indeed, since $(K,v)$ is henselian, so is $(Kv_K^q,\bar v)$. Hence if it were real closed, by \cite[Lemma 4.3.6]{EnPr} we would get that $(Kv_K^q)\bar v=Kv$ is orderable, contradiction. We conclude that $Kv_K^q=Kv_K^q(q)$.  So any valuation on the residue field has $q$-divisible valuation group. In particular $\bar{v}(Kv_K^q)$ is $q$-divisible, as required. 
	%The convex subgroup corresponding to the restriction of $v_K^q$ to $K$, is the intersection with $vK$ so it is also $q$-divisible (by convexity).

	\item[Case 2:]	If $Kv$ is not separably closed, as in Proposition \ref{P:res-field-strong}, $(K,v)$ is strongly dependent and hence so is $vK$. So by Proposition \ref{P:antireg} $vK$ is not $p$-divisible and not $p$-antiregular for some $p$. Thus, by \cite[Corollary 3.7]{coarsening} $K$ admits some $\0$-definable non-trivial henselian coarsening of $v$. 
	
	\end{itemize}
	
As any coarsening of a henselian valuation is henselian, the proposition is proved. 
\end{proof}

The following observation will not be used for the proof of our main result, but may be interesting on its own right: 

\begin{corollary}
	Let $K$ be a strongly dependent field,  $(\CK,v)$ a henselian field with $v\CK$ not divisible, and $\CK$ elementarily equivalent to $K$ (as pure fields). Then $K$ is henselian (i.e. admits a non-trivial henselian valuation).
\end{corollary}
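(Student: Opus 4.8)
The plan is to push a $\0$-definable henselian valuation from $\CK$ back to $K$ by elementary equivalence; the crux is that ``a given parameter-free formula defines the valuation ring of a non-trivial henselian valuation'' is a \emph{set of first-order sentences} in the language of rings, hence transfers along $\equiv$.

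First, strong dependence is a property of the theory, so from $K \equiv \CK$ (as pure fields) and $K$ strongly dependent we obtain that $\CK$ is strongly dependent. Since $(\CK,v)$ is henselian with $v\CK$ non-divisible, Proposition~\ref{P:coarsening} applies to $\CK$ in place of $K$ and yields a non-trivial $\0$-definable henselian coarsening $u$ of $v$ on $\CK$; fix a parameter-free formula $\varphi(x)$ in the language of rings defining $\CO_u$.

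Next I would spell out the first-order content of ``$\varphi$ defines the valuation ring of a non-trivial henselian valuation'': (a) one sentence saying that $\varphi$ defines a subring containing $1$ which is a valuation ring of the field (for every non-zero $x$, either $x$ or its inverse satisfies $\varphi$); (b) the sentence $\exists x\,\neg\varphi(x)$, saying this valuation is non-trivial; (c) for each $n \geq 1$, the usual sentence expressing Hensel's Lemma for polynomials of degree $\leq n$ with coefficients in $\varphi$ --- this is the standard first-order axiom scheme for henselianity, and because $\varphi$ is parameter-free each instance is a genuine sentence of the language of rings. All of (a)--(c) hold in $\CK$, since $\varphi$ defines there the non-trivial henselian valuation ring $\CO_u$.

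Finally, as $K \equiv \CK$ as pure fields, every sentence in (a)--(c) holds in $K$ as well. Hence $\varphi$ defines on $K$ the valuation ring of a non-trivial henselian valuation, so $K$ is henselian. The one point that needs care --- though it is routine rather than deep --- is precisely the use of $\0$-definability: with parameters, the henselianity scheme would sit under an existential quantifier as an infinite conjunction and would fail to be first-order, so one genuinely needs the parameter-free coarsening provided by Proposition~\ref{P:coarsening}.
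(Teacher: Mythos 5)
Your proof is correct and follows exactly the paper's own route: apply Proposition~\ref{P:coarsening} to $\CK$ (which inherits strong dependence from $K$ by elementary equivalence) to obtain a parameter-free formula defining a non-trivial henselian valuation ring, then transfer the resulting first-order axiom scheme (valuation ring, non-triviality, the degree-by-degree Hensel scheme) back to $K$ via $K\equiv\CK$. The paper's proof is a two-line version of this; your elaboration of why parameter-freeness is essential for the transfer is a useful and accurate observation.
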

\begin{proof}
	 By the last proposition $\mathcal K$ admits a $\0$-definable non-trivial henselian valuation. Since $K\equiv \mathcal K$ the same is true of $K$. 
\end{proof}

\begin{remark}
	Recall (\cite{PrZi1978}) that a field is $t$-henselian if it is elementarily equivalent (in the language of rings) to a henselian field. The assumptions of the last corollary are equivalent to $K$ being $t$-henselian, admitting some valuation $v$ with $vK$ non-divisible. 
\end{remark}

Using the above results we can finally conclude the following:

\begin{proposition}\label{P:value-grp-strong}
	Let $K$ be a strongly dependent field, $v$ a henselian valuation on $K$. Then the value group $vK$ is strongly dependent as a pure group. 
\end{proposition}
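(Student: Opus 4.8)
The strategy is to reduce to the case where $vK$ is non-divisible and then exploit the definable coarsening provided by Proposition \ref{P:coarsening} together with the inductive behaviour of strong dependence recorded in Corollary \ref{C:induction}. First I would dispose of the trivial case: if $vK$ is divisible then it is a pure ordered $\Qq$-vector space, hence dp-minimal, hence strongly dependent, and there is nothing to prove. So assume $vK$ is non-divisible.

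\textbf{Main step: producing a definable coarsening whose residue field and value group are both strongly dependent.} By Proposition \ref{P:coarsening}, since $K$ is strongly dependent and admits the henselian valuation $v$ with $vK$ non-divisible, $K$ admits a non-trivial $\0$-definable henselian coarsening $u$ of $v$. Being $\0$-definable in the strongly dependent field $K$, the structure $(K,u)$ is strongly dependent; in particular $uK$ is strongly dependent as a pure ordered abelian group (it is interpretable in $(K,u)$ — here one uses that $uK$ is stably embedded, or simply that strong dependence passes to interpretable structures). Now write $\Delta \le vK$ for the convex subgroup with $uK \cong vK/\Delta$, and recall that $v$ induces a henselian valuation $\bar v$ on the residue field $Ku$ with $\bar v(Ku) \cong \Delta$. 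The point is that $Ku$ is again a strongly dependent field: since $(K,u)$ is strongly dependent and $Ku$ is interpretable in it (the residue field of a definable valuation), $Ku$ is strongly dependent. Moreover $\bar v$ is a henselian valuation on $Ku$, so we are in exactly the same situation as at the start but with the strictly smaller value group $\Delta$.

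\textbf{Induction.} This sets up an induction on the ``complexity'' of $vK$. The cleanest formulation: by Theorem \ref{T:main} it suffices to show $vK$ has finite spines and $|\Pp_\infty(vK)| < \infty$; by Corollary \ref{C:induction}, if $\Delta$ and $vK/\Delta$ are each strongly dependent as pure ordered abelian groups then so is $vK$. Now $vK/\Delta \cong uK$ is strongly dependent by the previous paragraph. For $\Delta \cong \bar v(Ku)$: apply the inductive hypothesis, since $Ku$ is a strongly dependent field carrying the henselian valuation $\bar v$. To make the induction well-founded one needs a decreasing parameter; the natural choice is something like $|\Pp_\infty(vK)| + k$-type data, or more simply the number of ``jumps'' in $vK$ witnessing non-$p$-divisibility — each passage to the coarsening $u$ given by Proposition \ref{P:coarsening} strips off a non-divisible piece (the ``Moreover'' clause lets one arrange the convex subgroup to be $q$-divisible for a chosen $q$ witnessing non-$q$-divisibility of $vK$, which is what guarantees genuine descent rather than stagnation). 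When the parameter hits its minimal value, $\Delta$ is divisible and hence trivially strongly dependent, closing the induction.

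\textbf{Expected main obstacle.} The delicate point is ensuring the induction actually terminates: a priori $vK$ could be non-divisible while $uK$ is again non-divisible with the ``same'' obstruction, and one must verify that the coarsening produced by Proposition \ref{P:coarsening} genuinely decreases the relevant invariant. This is where the ``Moreover'' part of Proposition \ref{P:coarsening} is essential — in the separably-closed-residue-field case it lets us kill a chosen prime $q$ from $\Pp_\infty$ of the residue value group, and one has to check that in the other case (\cite[Corollary 3.7]{coarsening}) a comparable decrease occurs, or else argue directly via the spine decomposition of Corollary \ref{C:induction} that finiteness of spines and of $\Pp_\infty$ for both $\Delta$ and $vK/\Delta$ suffices and can be reached in finitely many steps. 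A secondary technical point is the transfer of strong dependence from $(K,u)$ to the interpretable structures $uK$ and $Ku$, which is standard (strong dependence is preserved under interpretation) but should be stated carefully.
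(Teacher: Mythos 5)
Your overall strategy — get a definable henselian coarsening $u$ of $v$ via Proposition \ref{P:coarsening}, split $vK$ along the corresponding convex subgroup $\Delta = \bar v(Ku)$, and induct using Corollary \ref{C:induction} — is exactly the paper's approach. But the ``main obstacle'' you flag (ensuring the induction genuinely descends, especially outside the separably-closed-residue-field case) is a real gap in your write-up, and the paper resolves it by a case split you did not make.

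The missing idea is this: if $Kv$ is \emph{not} separably closed, there is no need for any coarsening or induction at all, because by \cite[Theorem A]{JahNIP} $v$ is then definable in $K^{sh}$, the Shelah expansion is strongly dependent, and $vK$ is interpretable there — done. So one may assume from the outset that $Kv$ is separably closed, hence algebraically closed. In that regime the ``Moreover'' clause of Proposition \ref{P:coarsening} is \emph{always} available, and moreover the coarsening step preserves the assumption: the residue field of $(Ku,\bar v)$ is still $Kv$, still algebraically closed. This is what turns your worry about ``stagnation'' into a non-issue — you never need to invoke \cite[Corollary 3.7]{coarsening} inside the induction.

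The second gap is that you never pin down the induction parameter or verify it is finite. The parameter is $|\mathbb{P}_\infty(vK)|$: choosing $p\in\mathbb{P}_\infty(vK)$ and using the Moreover clause to make $\Delta = \bar v(Ku)$ be $p$-divisible gives $\mathbb{P}_\infty(\Delta)\subsetneq\mathbb{P}_\infty(vK)$, which is the strict descent. Finiteness of the parameter is not automatic from the running argument; it follows from the observation that $|\mathbb{P}_\infty(vK)|\le|\mathbb{P}_\infty(K^\times)|$ and that $K^\times$ is a strongly dependent abelian group, forcing $|\mathbb{P}_\infty(K^\times)|<\infty$ (e.g.\ by \cite[Corollary 3.12]{KaSh} or \cite{1bsd}). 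Your suggested alternative parameters (spine data, ``number of jumps'') would also need a separate finiteness argument, and the $K^\times$ bound is the clean way to get one. Finally, your base case ``$vK$ divisible'' is fine, but the paper uses the slightly more generous base case $\mathbb{P}_\infty(vK)=\emptyset$ (then $vK$ is dp-minimal by \cite[Proposition 5.1]{dpmin-val-fields}); also do not forget the degenerate case $K$ separably closed (hence algebraically closed), which is handled by ACVF.
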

\begin{proof}
	If $K$ is separably closed, and hence algebraically closed, the result follows from the strong dependence of ACVF. So we assume this not to be the case. 
	
	If $\mathbb{P}_{\infty}(vK)=\emptyset$ we get by \cite[Proposition 5.1]{dpmin-val-fields} that $vK$ is dp-minimal, and we are done. So we may assume that $|\mathbb{P}_{\infty}(vK)|>0$ and fix some prime $p\in \mathbb{P}_\infty(vK)$. We may assume that $Kv$ is algebraically closed, otherwise, $v$ is $K^{sh}$-definable by \cite[Theorem A]{JahNIP}), and we are done (as in the proof of Proposition \ref{P:res-field-strong}). 
	
	Proposition \ref{P:coarsening} supplies us with a non-trivial $\0$-definable henselian coarsening $u$ of $v$. Consider $Ku$, equipped with the valuation $\bar v$. By Proposition \ref{P:coarsening}, $\bar v (Ku)$, the corresponding convex subgroup of $vK$, may be chosen to be $p$-divisible. So $\mathbb{P}_\infty(\bar v(Ku))\subsetneq \mathbb{P}_\infty(vK)$. 

\begin{claim}
$|\mathbb{P}_{\infty}(vK)|<\infty$.
\end{claim}	
\begin{claimproof}
Either by \cite[Corollary 3.12]{KaSh} or by \cite{1bsd}, since $K^\times$ is a strongly dependent abelian group $|\mathbb{P}_{\infty}(K^\times)|<\infty$. Now notice that \[|\mathbb{P}_{\infty}(vK)|\leq |\mathbb{P}_{\infty}(K^\times)|.\]
\end{claimproof}

We conclude by induction on $|\mathbb{P}_{\infty}(vK)|$: by the induction hypothesis $\bar v (Ku)$ is strongly dependent (because $\bar v$ is henselian). It follows from Corollary \ref{C:induction} that $vK$ is strongly dependent since $vK/\bar v (Ku)$ and $\bar v (Ku)$ are strongly dependent. 
\end{proof}

Recall the following definition:
\begin{definition}

A valued field $(K,v)$ of residue characteristic $p>0$ is a \emph{Kaplansky field} if the value group is $p$-divisible, the residue field is perfect and the residue field does not admit any finite separable extensions of degree divisible by $p$.

\end{definition}

In \cite[Theorem 3.3]{JaSiTransfer}, Jahnke-Simon show that any theory of separably algebraically maximal Kaplansky fields of a fixed finite degree of imperfection is dependent if and only if the residue field and value group are.

\begin{proposition}\label{P:transf-strong-dep}
Let $(K,v)$ be either an algebraically maximal Kaplansky valued field or a henselian valued field of residue characteristic $0$. Then $(K,v)$ is strongly dependent if and only if the residue field and the value group are. The result remains valid under strongly dependent expansions of the residue field.
\end{proposition}
\begin{proof}
Let $(K,v)$ be either an algebraically maximal Kaplansky valued field or a henselian valued field of residue characteristic $0$. Since strong dependence is preserved under interpretations, it is clear that if $(K,v)$ is strongly dependent so are the value group and the residue field with all their induced structure. So we prove the reverse implication.

By \cite{DelHenselian} (and using \cite{GuSh}) for the residue characteristic $0$ case and \cite{belair} for the algebraically maximal Kaplansky case, $(K,v)$ is  dependent. By \cite[Theorem 3.12]{touchard}, it is, if fact, strongly dependent.

For the last part, recall that the residue field of a strongly dependent pure valued field $(K,v)$ is a stably embedded pure field provided that $(K,v)$ is either algebraically maximal Kaplansky or henselian of residue characteristic $0$, see for example  \cite[Corollary 4.4]{qe-kap}.

Consider now a strongly dependent expansion of the residue field.  By \cite[Proposition 2.5]{JaSiTransfer}, the expanded structure on $(K,v)$ is still dependent; by  \cite[Remark 3.15]{touchard}, it is still strong, and hence strongly dependent.
\end{proof}

\begin{lemma}\label{L:Kaplansky}
	Let $K$ be a strongly dependent field of characteristic $p>0$. Then $(K,v)$ is an algebraically maximal Kaplansky field with respect to any henselian valuation $v$. Furthermore, $(K,v)$ is strongly dependent.
\end{lemma}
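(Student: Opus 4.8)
The plan is to deduce the lemma from the Artin--Schreier closedness of strongly dependent fields of characteristic $p$ (the theorem of Kaplan, Scanlon and Wagner that NIP fields of characteristic $p>0$ have no Artin--Schreier extension, i.e.\ $\wp(F)=F$ for $\wp(x)=x^{p}-x$), together with the transfer result Proposition~\ref{P:transf-strong-dep} and the facts already established about $Kv$ and $vK$.

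\emph{Step 1: $K$ and $Kv$ have no finite separable extension of degree divisible by $p$.} By Fact~\ref{F:perfect}, $K$ is perfect. Every finite extension $L/K$ is interpretable in $K$, hence again strongly dependent and of characteristic $p$, hence Artin--Schreier closed. Suppose some finite $L/K$ has degree divisible by $p$; pass to its Galois closure $N/K$ with finite Galois group $G$. Since $p$ divides $[L:K]$, which in turn divides $|G|$, a Sylow $p$-subgroup $P\le G$ is nontrivial, so it has a normal subgroup $M$ of index $p$, and then $N^{M}/N^{P}$ is a degree $p$ Galois extension of $N^{P}$ --- which is again a finite extension of $K$, hence Artin--Schreier closed. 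As a cyclic extension of degree $p$ in characteristic $p$ is an Artin--Schreier extension, this is a contradiction. Thus $K$ has no finite (equivalently, $K$ being perfect, finite separable) extension of degree divisible by $p$. The residue field $Kv$ is strongly dependent by Proposition~\ref{P:res-field-strong}, perfect by Fact~\ref{F:perfect} and of characteristic $p$, so the same argument shows $Kv$ has no finite separable extension of degree divisible by $p$.

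\emph{Step 2: $(K,v)$ is an algebraically maximal Kaplansky field.} I first check $p$-divisibility of $vK$. If $\gamma\in vK\setminus pvK$ then also $-\gamma\notin pvK$, so I may assume $\gamma<0$ and pick $a\in K$ with $v(a)=\gamma$; no $x\in K$ can satisfy $\wp(x)=a$, since $v(x)\ge 0$ forces $v(\wp(x))\ge 0>\gamma$, while $v(x)<0$ forces $v(\wp(x))=p\,v(x)\in pvK$. This contradicts $\wp(K)=K$, so $vK$ is $p$-divisible. Together with Step 1 this shows $(K,v)$ is a Kaplansky field (the residue characteristic being $p$, as $\operatorname{char}K=p$). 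It is algebraically maximal because it is defectless: for any finite $L/K$ the defect $d(L/K)$ is a power of $p$ dividing $[L:K]$, which by Step 1 is prime to $p$, so $d(L/K)=1$; and a henselian defectless valued field is algebraically maximal, since a finite subextension of a proper immediate algebraic extension is again immediate, hence of degree $e\cdot f\cdot d=1$.

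\emph{Step 3: conclusion and the hard part.} Having shown $(K,v)$ is an algebraically maximal Kaplansky field, strong dependence of $(K,v)$ follows from Proposition~\ref{P:transf-strong-dep} once we know $Kv$ and $vK$ are strongly dependent --- which is exactly Proposition~\ref{P:res-field-strong} and Proposition~\ref{P:value-grp-strong}. The main obstacle is Step 1, namely upgrading Artin--Schreier closedness of $K$ and all its finite extensions to ``no finite separable extension of degree divisible by $p$'' via the Sylow and Frattini argument above; the remaining steps are routine, though one should take minor care that ``a subextension of an immediate extension is immediate'' and the fundamental equality $[L:K]=efd$ (with $d$ a $p$-power) are invoked only in the henselian setting, where they hold.
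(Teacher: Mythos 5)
Your proof is correct and follows essentially the same route as the paper: establish that $vK$ is $p$-divisible and that $Kv$ is perfect with no finite separable extensions of degree divisible by $p$ (so $(K,v)$ is Kaplansky), observe that every finite extension of $K$ has degree prime to $p$ so $(K,v)$ is defectless and hence algebraically maximal, and then invoke Propositions~\ref{P:res-field-strong}, \ref{P:value-grp-strong} and \ref{P:transf-strong-dep} to transfer strong dependence. The only differences are cosmetic: where you re-derive the ``no separable extension of degree divisible by $p$'' statement from Artin--Schreier closedness via the Sylow argument, the paper simply cites \cite[Corollary~4.4]{KaScWa} (your Step~1 is a re-proof of that corollary); and for $p$-divisibility of $vK$ the paper uses the one-line perfectness argument ($vK = v(K^p) = p\,vK$) rather than your Artin--Schreier computation, though both are valid.
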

\begin{proof}
Since $\mathrm{char}(K)=p$ it is perfect and so $vK$ is $p$-divisible. 
Moreover, as $K$ is dependent it follows from the proof of \cite[Proposition 5.3]{KaScWa} that $Kv$ is Artin-Schreier closed, and therefore infinite. 
%(the authors show that if $K$ is dependent of characteristic $p>0$ then $Kv$ is infinite).

Recall that by \cite[Corollary 4.4]{KaScWa} infinite dependent fields of characteristic $p$ have no separable extensions of degree divisible by $p$, the characteristic of the field. Thus, strongly dependent fields, which are perfect, have no finite extensions of degree dividing $p$. The residue field $Kv$ is strongly dependent by Proposition \ref{P:res-field-strong} and hence $(K,v)$ is Kaplansky.

Since the degree of every finite extension of $K$ is prime to $p$, $K$ is defectless and thus, by henselianity, algebraically maximal. By propositions \ref{P:res-field-strong}, \ref{P:value-grp-strong} and \ref{P:transf-strong-dep} $(K,v)$ is strongly dependent.
\end{proof}

\begin{proposition}\cite[The proof of Theorem 4.3.1]{johnson}\label{P:johnson}
Let $K$ be a strongly dependent field and $(K,v)$ henselian of mixed characteristic $(0,p)$. Then 

\begin{enumerate}
\item either $[0,v(p)]$ is finite or there exists an non-trivial $p$-divisible convex subgroup of $vK$,
\item if $[0,v(p)]$ is infinite then $Kv$ is infinite.
\end{enumerate}
\end{proposition}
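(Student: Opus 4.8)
The plan is to reduce, via the calculus of coarsenings together with Lemma \ref{L:Kaplansky}, to the case where $vK$ is archimedean, and then to invoke Johnson's analysis of that case. Write $\Delta$ for the convex hull of $v(p)$ in $vK$ (the smallest convex subgroup containing $v(p)$) and $\delta\subsetneq\Delta$ for the largest convex subgroup of $vK$ not containing $v(p)$; then $\Delta/\delta$ is archimedean. Since for $0\neq x\in\delta$ one has $0\le x\le v(p)$ by convexity of $\delta$, the case $\delta\neq\{0\}$ already forces $[0,v(p)]$ to be infinite; and if $\delta=\{0\}$ then $\Delta$ is the least non-zero convex subgroup of $vK$, hence archimedean, and $[0,v(p)]$ is finite precisely when $\Delta\cong\Zz$. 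So if $[0,v(p)]$ is finite the statement is trivial, and from now on I assume $[0,v(p)]$ is infinite, so that either $\delta\neq\{0\}$, or $\delta=\{0\}$ and $\Delta$ is archimedean and dense.

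Suppose first $\delta\neq\{0\}$. Let $w$ be the coarsening of $v$ with associated convex subgroup $\delta$, so $w$ is henselian, $wK\cong vK/\delta$, and $v$ induces on $Kw$ a henselian valuation $\bar v$ with value group $\bar v(Kw)\cong\delta$ and residue field $Kv$. Because $v(p)\notin\delta$ we have $w(p)\neq 0$, so $p$ lies in the maximal ideal of the valuation ring of $w$ and $\ch(Kw)=p$. Moreover $Kw$ is strongly dependent by Proposition \ref{P:res-field-strong}, and is infinite, as it carries the non-trivial valuation $\bar v$. Lemma \ref{L:Kaplansky} then applies to $(Kw,\bar v)$: it is an algebraically maximal Kaplansky field, so its value group $\delta$ is $p$-divisible — this is the required non-trivial $p$-divisible convex subgroup of $vK$ — and, as the proof of that lemma shows, its residue field $Kv$ is Artin--Schreier closed, hence infinite. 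Both clauses are established in this case.

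Now suppose $\delta=\{0\}$, so $\Delta$ is archimedean and dense. Coarsening $v$ by $\Delta$ gives a henselian valuation $w'$ with $Kw'$ strongly dependent (Proposition \ref{P:res-field-strong}) of characteristic $0$, on which $v$ induces a henselian valuation $\bar v'$ of mixed characteristic $(0,p)$ with value group (isomorphic to) the dense archimedean group $\Delta$, residue field $Kv$, and $\bar v'(p)=v(p)$, so that $[0,\bar v'(p)]$ is again infinite. Replacing $(K,v)$ by $(Kw',\bar v')$, I may assume $vK$ itself is archimedean and dense; then the only candidate non-trivial $p$-divisible convex subgroup is $vK$, so what remains is to show: a strongly dependent field carrying a henselian valuation of mixed characteristic $(0,p)$ with archimedean dense value group necessarily has $p$-divisible value group and infinite residue field.

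This last assertion is the crux, and it is exactly (the relevant part of) Johnson's treatment of dp-finite valued fields in mixed characteristic, \cite[the proof of Theorem 4.3.1]{johnson}: assuming $vK$ is not $p$-divisible, or that $Kv$ is finite, one uses Hensel's lemma together with the density of $vK$ to exhibit, inside the interpretable ring $\mathcal{O}_v/p\mathcal{O}_v$ (equivalently, among suitable families of Kummer cosets indexed by a dense set of values below $v(p)$), a $k$-inconsistent array for every $k<\omega$, i.e. an inp-pattern of unbounded depth, contradicting strong dependence of $K$. Hence $vK$ is $p$-divisible and $Kv$ is infinite, which completes the proof. I expect this inp-pattern construction to be the only genuinely hard step; everything preceding it is routine manipulation of coarsenings together with Lemma \ref{L:Kaplansky}.
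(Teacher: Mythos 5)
Your first case ($\delta \neq \{0\}$, where $\delta$ is the largest convex subgroup of $vK$ not containing $v(p)$) is handled correctly, and in a way that is a legitimate simplification of the paper's: you coarsen once by $\delta$ to land in an equicharacteristic-$p$ valued field $(Kw,\bar v)$ and invoke Lemma \ref{L:Kaplansky}, which simultaneously gives $p$-divisibility of $\delta$ (the paper derives this directly from Fact \ref{F:perfect} applied to $K_2$) and infinitude of $Kv$ (via the Artin--Schreier closure step in that lemma's proof, which is the same \cite{KaScWa} ingredient the paper uses). The paper instead coarsens twice, but the net effect is the same and both are fine.

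The genuine gap is in your second case. When $\delta = \{0\}$ you reduce to the situation where $vK$ is archimedean and dense and then declare the remaining step to be Johnson's Hensel-lemma/Kummer-coset inp-pattern construction, which you do not carry out and explicitly flag as "the only genuinely hard step." The paper never confronts that case, and its route around it is the point you miss. The paper first proves a Claim (a short direct verification inside the ordered abelian group) that the maximal $p$-divisible convex subgroup $\Delta_p$ is \emph{uniformly definable}; hence the conclusions of both (1) and (2) are first-order and one may pass to a sufficiently saturated elementary extension of $(K,v)$, with $K$ still strongly dependent as a pure field, in which $|[0,v(p)]|>|\mathbb{R}|$. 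But $\Delta/\delta$ is archimedean and embeds in $\mathbb{R}$, and $[0,v(p)]\subseteq\Delta$, so in the saturated model $\delta\neq\{0\}$ is forced. Thus only your first case ever arises and no inp-pattern argument is needed. Since that array construction is exactly the step you identified as the crux and left as a black box, and the paper replaces it by a different and more elementary saturation argument, your proof as written has a real hole precisely where the work should be.
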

\begin{proof}
\begin{enumerate}
\item Assume  $[0,v(p)]$ is infinite. Let $\Delta_p$ be the maximal $p$-divisible convex subgroup of $vK$.
\begin{claim}
There is a formula defining, in any ordered abelian group, the maximal $p$-divisible convex subgroup.
\end{claim}
\begin{claimproof}
Consider \[X=\{g\in vK: \text{for all } 0\leq |x|\leq |g| \text{, $x$ is $p$-divisible}\}.\]
All elements of $X$ are obviously $p$-divisible and it is closed under inverses. Let $g,h\in X$ and assume for simplicity that $0<g+h$. We may assume that $g,h>0$ and let $0<c<g+h$. If $c\leq g$ or $c\leq h$ then $c$ is $p$-divisible so assume without loss of generality that $h<c$, but then $0<c-h<g$ hence $c-h$ is $p$-divisible and thus so is $c$. So $X$ is a subgroup. By definition we must have that $X=\Delta_p$. 
%
%As a side note, notice that since $vK$ has finite spines, $\Delta_p$ is equal to the definable convex subgroup $H_p(a)$ corresponding to the minimal element of the spine $\mathcal{S}_p$.
\end{claimproof}

As a result, what we want to prove is first order expressible so we may assume that $(K,v)$ is sufficiently saturated and specifically that $|[0,v(p)]|>|\mathbb{R}|$.

Let $\Delta$ be the minimal convex subgroup of $vK$ containing $v(p)$ and $\Delta_0$ the maximal convex subgroup not containing $v(p)$. Since $\Delta/\Delta_0$ is archimedean it embeds into $\mathbb{R}$. If $\Delta_0$ were trivial then, since $[0,v(p)]\subseteq \Delta$ necessarily $|\Delta|>|\mathbb{R}|$, which is impossible. 

The following claim will finish $(1)$.
\begin{claim}
$\Delta_0$ is $p$-divisible and thus $\Delta_0\subseteq \Delta_p$.
\end{claim}
\begin{claimproof}
The coarsening $v_0:K\to vK/\Delta$ of $v$ is henselian of equi-characteristic $0$. 
In particular $K_1:=Kv_0$ is strongly dependent by Proposition \ref{P:res-field-strong}. Also, the valuation $v_1:K_1\to \Delta/\Delta_0$ of mixed characteristic $(0,p)$ is henselian.

Finally, consider the valuation $v_2:K_2\to \Delta_0$, where $K_2:=K_1v_1$. Note that $K_2$ is of characteristic $p>0$ and that $K_2$ is strongly dependent by Proposition \ref{P:res-field-strong}. By Fact \ref{F:perfect}, $K_2$ is perfect and hence $\Delta_0$ is $p$-divisible.
\end{claimproof}
%
%We will now show that $\Delta_0$ definable and thus $\Delta_0\subsetneq \Delta_p$, and thus $\Delta\subseteq \Delta_p$, as needed.
%\begin{claim}
%$\Delta_0$ is not definable.
%\end{claim}
%\begin{claimproof}
%Recall that \[\Delta_0=\bigcap_{n\in \mathbb{N}} \{g:n|g|<|v(p)|\}.\] If it were definable then there would exist an $m$ such that \[\Delta_0=\{g:m|g|<|v(p)|\}.\]
%As a direct result from the definition, if $g\in \Delta_0$ then $N|g|<|v(p)|$ for all $m\leq N$.
%
%Recall that $\Delta/\Delta_0$ is archimedean. 
%
%If $\Delta/\Delta_0$ is discrete then it is isomorphic to $\mathbb{Z}$. By \cite[Theorem 4]{Hong}, $v_1$ is then definable in $K_1$ so $(K_1,v_1)$ is strongly dependent. By \cite[Lemma 4.2.1]{johnson}, $K_1v_1=K_2$ is necessarily finite, which contradicts the fact that $vK$ is infinite.
%
%If $\Delta/\Delta_0$ is dense, recall that in any archimedean dense ordered abelian group $G$, $nG$ intersects $(a,b)$ for $a\neq b$. In $\Delta/\Delta_0$ consider the interval $(0+\Delta_0, v(p) +\Delta_0)$, it is non-empty because $v(p)\notin \Delta_0$. There, thus exists an element $g$ such that \[0+\Delta_0<mg+\Delta_0<v(p)+\Delta_0,\] but then by definition $0<mg<v(p)$ so $g\in\Delta_0$ and since it is a subgroup also $mg\in \Delta_0$, contradiction.
%
%\end{claimproof} 
\item  Keeping the same notation, assume that $[0,v(p)]$ is infinite. As before, $\Delta_0$ is non trivial. The proof of \cite[Proposition 5.3]{KaScWa} shows  that if $K$ is dependent and $(K,v)$ is a valued field of characteristic $p>0$ then $Kv$ is infinite. Applying this fact to the valuation $v_2:K_2\to \Delta_0$, whose residue field is $Kv$, finishes the proof.
\end{enumerate}
\end{proof}

We can now prove the main part of Theorem \ref{T:mainfields}: 

\begin{theorem}\label{T:fields}
	Let $K$ be a strongly dependent field. Assume that $v$ is a henselian valuation on $K$ then $(K,v)$ is strongly dependent.
\end{theorem}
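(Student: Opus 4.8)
The plan is to reduce, by repeatedly passing to coarsenings of $v$, to valued fields that are either handled earlier in the paper or satisfy a classical Ax--Kochen--Ershov transfer principle, and then to reassemble the pieces along the coarsening tower using stable embeddedness. First I would dispose of the easy cases. If $K$ is separably closed it is algebraically closed (being perfect, by Fact \ref{F:perfect}), so $(K,v)$ is trivially valued or a model of $\mathrm{ACVF}$, hence strongly dependent; and if $\ch K=p>0$ then Lemma \ref{L:Kaplansky} applies directly. So one may assume $\ch K=0$ and $K$ not separably closed. If in addition $\ch Kv=0$, i.e. $(K,v)$ is equicharacteristic, then -- after passing to an elementary extension carrying an angular component map -- one has elimination of field quantifiers in the Denef--Pas language (classical; see \cite{belair,DelHenselian}), so that strong dependence transfers from $Kv$ and $vK$ exactly as in the proof of Proposition \ref{P:transf-strong-dep} via \cite[Claim 1.17(2)]{strongdep}; and $Kv$, $vK$ are strongly dependent by Propositions \ref{P:res-field-strong} and \ref{P:value-grp-strong}.

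The substantial case is mixed characteristic $(0,p)$. Here I would first note that if $Kv$ is not separably closed then $v$ is definable (with parameters, which is harmless) in $K^{sh}$ by \cite[Theorem A]{JahNIP}, so $(K,v)$ is a reduct of the strongly dependent structure $K^{sh}$ and we are done; hence one may assume $Kv$ algebraically closed (it is perfect by Fact \ref{F:perfect} and Proposition \ref{P:res-field-strong}). Now I would invoke Proposition \ref{P:johnson} (Johnson's structural dichotomy). If $[0,v(p)]$ is finite, $(K,v)$ is finitely ramified, and one coarsens to an equicharacteristic-$0$ henselian valuation $u$ with value group $vK/\Delta$, where $\Delta$ is the convex hull of $v(p)$; the induced henselian valuation $\bar v$ on $Ku$ has value group $\Delta$ and residue field $Kv$, and itself coarsens (by the maximal convex subgroup of $\Delta$ missing $v(p)$) to a discrete rank-one mixed-characteristic -- hence finitely ramified -- valuation over an equicharacteristic-$p$ valued field. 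If instead $[0,v(p)]$ is infinite, Proposition \ref{P:johnson} yields a non-trivial $p$-divisible convex subgroup; taking $\Delta$ to be the maximal such (it is $\0$-definable, as in the proof of Proposition \ref{P:johnson}) and coarsening by it gives $(K,u)$ whose value group $vK/\Delta$ has no non-trivial $p$-divisible convex subgroup, so applying Proposition \ref{P:johnson} again makes $(K,u)$ equicharacteristic or finitely ramified, while the induced $(Ku,\bar v)$ has $p$-divisible value group $\Delta$ and algebraically closed residue field $Kv$.

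Thus in every branch one is left with a finite tower of coarsenings in which each stage is equicharacteristic $0$, equicharacteristic $p$, finitely ramified mixed characteristic, or a Kaplansky field with $p$-divisible value group. Equicharacteristic-$p$ stages are strongly dependent by Lemma \ref{L:Kaplansky}; equicharacteristic-$0$ and finitely ramified mixed-characteristic stages eliminate field quantifiers, so strong dependence transfers from their value groups and residue fields as in Proposition \ref{P:transf-strong-dep} (the value groups in question are convex subgroups or quotients of $vK$, hence strongly dependent by the analysis behind Corollary \ref{C:induction}, and the residue fields are strongly dependent by Proposition \ref{P:res-field-strong}); and a Kaplansky stage, once known to be algebraically maximal, is likewise strongly dependent by Proposition \ref{P:transf-strong-dep}. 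Since each stage's residue field is stably embedded in that stage, I would then expand that residue field by the valuation one step down and apply Proposition \ref{P:strong2.5} to climb the tower, concluding that the structure assembled from all the stages -- in which $\mathcal{O}_v$ is definable -- is strongly dependent.

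The main obstacle I expect is the algebraic maximality (equivalently, defectlessness) of the Kaplansky stages arising in the mixed-characteristic analysis: without it Proposition \ref{P:transf-strong-dep} does not apply, and while in the equicharacteristic-$0$ and finitely ramified stages there is no defect so the issue is vacuous, a mixed-characteristic Kaplansky field with $p$-divisible value group and algebraically closed residue field can a priori carry defect. This is precisely the point at which one must import Johnson's analysis of defect in dp-finite valued fields (\cite{johnson}), which does not rely on the present theorem; it is also the origin of the corollary, announced in the introduction, that strongly dependent fields are defectless with respect to every henselian valuation. A secondary, purely bookkeeping difficulty is verifying that the coarsening tower is finite and that the stable-embeddedness patching of Proposition \ref{P:strong2.5} genuinely applies at each step, for which one uses that $|\mathbb{P}_\infty(vK)|$ is finite (Proposition \ref{P:value-grp-strong}).
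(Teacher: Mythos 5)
Your overall strategy -- reduce to a finite tower of coarsenings, handle each stage by one of (equicharacteristic $0$ transfer, the equicharacteristic $p$ Kaplansky lemma, or a mixed-characteristic Kaplansky transfer), and reassemble the tower via Proposition \ref{P:strong2.5} -- is the same as the paper's. The decomposition you choose is slightly different: you coarsen by the \emph{maximal $p$-divisible} convex subgroup of $vK$, whereas the paper coarsens by $\Delta$, the \emph{smallest convex subgroup containing} $v(p)$, and then again by $\Delta_0$, the largest convex subgroup missing $v(p)$. This difference is not cosmetic, and it is exactly what makes your version break down at the point you yourself flag as ``the main obstacle.''

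The gap is the algebraic maximality of the mixed-characteristic Kaplansky stage. You propose to get it from Johnson's defectlessness theorem (\cite[Theorem 4.3.2]{johnson}), ``which does not rely on the present theorem,'' but that result is about strongly dependent \emph{valued} fields: it takes as hypothesis that $(K,v)$ is strongly dependent, which is precisely what we are trying to establish. (This is also why the paper derives Corollary \ref{C:defectless} \emph{from} Theorem \ref{T:fields}, not conversely.) The paper avoids the circularity by ensuring that the mixed-characteristic stage $(K_1,v_1)$ has \emph{archimedean} value group $\Delta/\Delta_0$; then, after passing to a sufficiently saturated model of $\mathrm{Th}(K)$ at the outset (allowed, since strong dependence of the pure field is preserved), countable cofinality of cuts in an archimedean group yields spherical completeness, hence algebraic maximality of $(K_1,v_1)$ with no appeal to its strong dependence -- see Claim~\ref{C:SEmixed}. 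With your choice of $\Delta$ (maximal $p$-divisible convex subgroup) the induced valuation on $Ku$ has value group $\Delta$, which may have arbitrary rank, so the spherical-completeness shortcut is unavailable and no non-circular source of algebraic maximality is offered. A secondary remark: your ``finitely ramified mixed characteristic'' branch is in fact vacuous once $Kv$ is assumed algebraically closed -- a finitely ramified mixed-characteristic henselian valuation on a strongly dependent field is discrete, hence definable by Hong's theorem, and then \cite[Lemma 4.2.1]{johnson} forces the residue field to be finite, contradicting algebraic closedness; this is exactly the content of the paper's Case 3.1 together with the first claim in Case 3.2, and it is also why one need not worry about whether finitely ramified fields have the needed field-quantifier elimination.
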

\begin{proof}
We may move to a sufficiently saturated extension of $(K,v)$, keeping the base field strongly dependent.
%If $Kv$ is finite, and hence not separably closed, by \cite[Theorem A]{JahNIP}, $v$ is definable in $K^{sh}$ and the claim is proved. 
%Assume $Kv$ is infinite. 
By propositions \ref{P:res-field-strong} and \ref{P:value-grp-strong}, $vK$ and $Kv$ are strongly dependent. The proof now splits in three case: 

\underline{Case 1:} If $\mathrm{char}(Kv)=0$ then, since $vK$ and $Kv$ are strongly dependent, by \cite[Claim 1.17]{strongdep}, $(K,v)$ is also strongly dependent. Moreover, we note that in this case $vK$ and $Kv$ are stably embedded as pure structures by \cite[Corollary 5.25]{vd}.

\underline{Case 2:} Assume that $\mathrm{char}(K)=\mathrm{char}(Kv)=p>0$. This case follows by Lemma \ref{L:Kaplansky}. Moreover, we note that $vK$ and $Kv$ are stably embedded as pure structures by \cite[Lemma 3.1]{JaSiTransfer}.

\underline{Case 3:} Assume that $(K,v)$ is of mixed characteristic $(0,p)$.
Let $\Delta_0$ be the largest convex subgroup of $\Gamma:=vK$ not containing $v(p)$ and $\Delta$ the smallest convex subgroup containing $v(p)$. 

The coarsening $v_0:K\to \Gamma/\Delta$ of $v$ is henselian of equi-characteristic $0$. So by Case 1, $(K,v_0)$ is strongly dependent. 
In particular $K_1:=Kv_0$ is strongly dependent. Also, the valuation $v_1:K_1\to \Delta/\Delta_0$ of mixed characteristic $(0,p)$ is henselian.

Finally, consider the valuation $v_2:K_2\to \Delta_0$, where $K_2:=K_1v_1$. It is of equi-characteristic $(p,p)$ and thus $(K_2,v_2)$ is strongly dependent by Case 2.

\underline{Case 3.1:} If $K_1v_1=K_2$ is finite then so is $Kv$, and hence it is not separably closed, by \cite[Theorem A]{JahNIP}, $v$ is definable in $K^{sh}$ so $(K,v)$ is strongly dependent. 

\underline{Case 3.2:} Assume $K_2$ is infinite.

\begin{nclaim}
$(K_1,v_1)$ is unboundedly ramified, i.e. $[0,v_1(p)]$ is infinite, and $\Delta/\Delta_0$ is $p$-divisible.
\end{nclaim}
\begin{claimproof}
If $[0,v_1(p)]$ is finite then the valuation $v_1:K_1\to \Delta/\Delta_0$ is discrete, hence by \cite[Theorem 4]{Hong} $v_1$ is definable in $K_1$, so $(K_1,v_1)$ is strongly dependent. Now, by \cite[Lemma 4.2.1]{johnson}, $K_1v_1=K_2$ is finite, a contradiction.

We may now apply Proposition \ref{P:johnson} to $(K_1,v_1)$. Since $\Delta/\Delta_0$ is archimedean, if it contains a non-trivial $p$-divisible convex subgroup, $\Delta/\Delta_0$ itself must be $p$-divisible.
\end{claimproof}

We can now show: 

\begin{nclaim}\label{C:SEmixed}
$(K_1,v_1)$ is strongly dependent, Kaplansky and algebraically maximal. Moreover the value group $v_1K_1$ and residue field $K_1v_1$ are stably embedded as pure structures.
\end{nclaim}
\begin{claimproof}
The following argument is taken from \cite[Theorem 4.3.1]{johnson}. Since $(K,v)$ is sufficiently saturated, any countable chain of balls in $(K,v)$ has non-empty intersection. Therefore, the same is true for $(K_1,v_1)$. On the other hand, $\Delta/\Delta_0$ embeds into $\mathbb{R}$ and thus every cut has countable cofinality, consequently $(K_1,v_1)$ is spherically complete and thus algebraically maximal. It is obviously Kaplansky and hence, by Proposition \ref{P:transf-strong-dep}, $(K_1,v_1)$ is strongly dependent. The moreover is due, again, to \cite[Lemma 3.1]{JaSiTransfer}.
\end{claimproof}

It will be enough to show that the structure $(K,v_0,K_1,v_1, K_2,v_2)$ is strongly dependent, since $v$ is definable there. We apply Proposition \ref{P:transf-strong-dep} twice. Since $(K,v_0)$ is strongly dependent, and $K_1$ is stably embedded as a pure structure and $(K_1,v_1)$ is strongly dependent, $(K,v_0,K_1,v_1)$ is strongly dependent. Doing this again, we get our result. 
\end{proof}

\begin{corollary}\label{C:defectless}
Let $K$ be a strongly dependent field. Then for every henselian valuation $v$ on $K$, the valued field $(K,v)$ is defectless, and therefore algebraically maximal. 
\end{corollary}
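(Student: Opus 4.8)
Corollary \ref{C:defectless} asserts that a strongly dependent field $K$ is defectless (and hence algebraically maximal) with respect to every henselian valuation.

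The plan is to reduce to the three characteristic cases already isolated in the proof of Theorem \ref{T:fields}, using the transfer results for the value group and residue field together with results of Johnson. First, by Propositions \ref{P:res-field-strong} and \ref{P:value-grp-strong}, for any henselian $v$ on $K$ both the residue field $Kv$ and the value group $vK$ are strongly dependent. When $\mathrm{char}(K)=p>0$, Lemma \ref{L:Kaplansky} already tells us that $(K,v)$ is an algebraically maximal Kaplansky field; in particular $(K,v)$ is defectless because every finite extension of $K$ has degree prime to $p$, so no defect can occur. This disposes of the equicharacteristic $p$ case outright.

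For the equicharacteristic zero case, $\mathrm{char}(Kv)=0$, there is no defect at all: in residue characteristic zero every henselian valued field is defectless (the fundamental inequality is an equality), so again $(K,v)$ is algebraically maximal — indeed this requires no strong dependence. The remaining, and genuinely substantive, case is mixed characteristic $(0,p)$. Here one appeals to Johnson's analysis (\cite{johnson}), as channeled through Proposition \ref{P:johnson} and the case analysis inside the proof of Theorem \ref{T:fields}. The point is that $(K,v)$ decomposes (after passing to a saturated extension, which preserves strong dependence and does not affect defectlessness, as these are elementary) as a composition $v_0$, $v_1$, $v_2$ of henselian valuations, where $v_0$ has equicharacteristic zero residue field (hence is defectless), $v_2$ has equicharacteristic $p$ residue field and, by Lemma \ref{L:Kaplansky} applied to $K_2$, is an algebraically maximal Kaplansky valued field (hence defectless), and the middle piece $(K_1,v_1)$ of mixed characteristic is shown in Claim \ref{C:SEmixed} to be Kaplansky and algebraically maximal, hence defectless. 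Since a composition of defectless valuations is defectless (Ax--Kochen--Ershov style: the defect is multiplicative in towers — see e.g. \cite[Chapter 11]{EnPr}), $(K,v)$ is defectless, and being henselian it is therefore algebraically maximal.

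The main obstacle is the mixed characteristic case, and within it the treatment of the middle valuation $(K_1,v_1)$: one must know it is algebraically maximal Kaplansky, which is exactly the content of Claim \ref{C:SEmixed} in the proof of Theorem \ref{T:fields}, itself relying on the saturation-plus-real-valued-value-group argument of Johnson showing spherical completeness. Everything else is bookkeeping with the multiplicativity of defect in a tower of valuations and the trivial defectlessness statements in residue characteristic zero and in the Kaplansky case. So in practice the proof is a short invocation: cite Lemma \ref{L:Kaplansky} for the equicharacteristic $p$ part, cite the residue-characteristic-zero triviality for the equicharacteristic zero part, and for mixed characteristic cite Proposition \ref{P:johnson} together with the decomposition and Claim \ref{C:SEmixed} from the proof of Theorem \ref{T:fields}, then conclude by multiplicativity of defect; algebraic maximality then follows from henselianity plus defectlessness.
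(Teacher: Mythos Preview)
Your argument is essentially correct but takes a much longer route than the paper. The paper's proof is two lines: Theorem~\ref{T:fields} gives that $(K,v)$ is strongly dependent as a valued field, and then one simply cites \cite[Theorem 4.3.2]{johnson}, which states that every strongly dependent henselian valued field is defectless. Algebraic maximality then follows from \cite[Theorem 11.31]{FVK}.

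What you have done instead is reprove the relevant instance of Johnson's defectlessness theorem by hand, using the same standard decomposition $v = v_2 \circ v_1 \circ v_0$ that appears both in \cite{johnson} and in the proof of Theorem~\ref{T:fields}. This is more self-contained (it avoids the black-box citation), and the ingredients you invoke---defect is multiplicative in towers, equicharacteristic~$0$ henselian fields are defectless, algebraically maximal Kaplansky fields are defectless---are all standard. There is, however, a small gap: your appeal to Claim~\ref{C:SEmixed} for the middle piece $(K_1,v_1)$ only covers Case~3.2 of the proof of Theorem~\ref{T:fields}, where $K_2$ is infinite. When $K_2$ is finite (Case~3.1), $(K_1,v_1)$ need not be Kaplansky (a finite residue field has separable extensions of degree~$p$), so your stated reason ``Kaplansky and algebraically maximal, hence defectless'' fails. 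The fix is easy: the spherical-completeness argument underlying Claim~\ref{C:SEmixed} works regardless of whether $K_2$ is finite, and spherically complete (i.e.\ maximal) valued fields are defectless; but you should say this explicitly rather than routing through the Kaplansky hypothesis. You should also justify, or at least flag, the claim that defectlessness transfers along elementary extensions of henselian fields---this is true, but not entirely trivial.
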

\begin{proof}
By \cite[Theorem 4.3.2]{johnson} every strongly dependent $(K,v)$ is defectless. As defectless henselian fields are algebraically maximal \cite[Theorem 11.31]{FVK}, the corollary follows. 
\end{proof}

To finish the proof of Theorem \ref{T:mainfields} we need to show that in every strongly dependent henselian field the value group is stably embedded as an ordered abelian group and the residue field is stably embedded as pure field. In a different paper, we show that every strongly dependent henselian field admits elimination of field quantifiers, the result follows (see \cite{qe-kap}).

\begin{remark}
Theorem \ref{T:fields} can also be deduced from elimination of field quantifiers and \cite[Claim 1.17(2)]{strongdep}, see \cite{qe-kap}.
\end{remark}

We end with the following consequence of some of the results discussed in this paper. Note that it answers \cite[Question 9.9.3]{johnson} to the affirmative.
\begin{proposition}
Let $K$ be a dp-minimal field and $v$ a henselian valuation on $K$. Then $(K,v)$ is also dp-minimal.
\end{proposition}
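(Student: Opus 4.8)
The plan is to bootstrap Theorem~\ref{T:fields} from strong dependence to dp-rank $1$. Since every dp-minimal field is strongly dependent, Theorem~\ref{T:fields} already gives that $(K,v)$ is strongly dependent, so $\dprk(K,v)<\aleph_0$, and Propositions~\ref{P:res-field-strong} and~\ref{P:value-grp-strong} give that $Kv$ and $vK$ are strongly dependent. As $K$ is a reduct of $(K,v)$ we have $1=\dprk(K)\le\dprk(K,v)$, so it suffices to prove $\dprk(K,v)\le 1$. We may assume $v$ is nontrivial, and that $K$ is not separably closed, since if $K$ is separably closed it is (by Fact~\ref{F:perfect}) algebraically closed and $(K,v)$ is a model of ACVF, which is dp-minimal.

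First I would dispose of the cases in which $v$, or one of the coarsenings of $v$ appearing in the proof of Theorem~\ref{T:fields}, is definable in a Shelah expansion of a dp-minimal field. If $Kv$ is not separably closed then $v$ is definable in $K^{sh}$ by \cite[Theorem~A]{JahNIP}; and in the mixed-characteristic analysis, whenever one of the coarsenings used in the proof of Theorem~\ref{T:fields} is discretely valued or has finite residue field, that coarsening is definable in the Shelah expansion of the relevant dp-minimal residue field (by \cite[Theorem~4]{Hong}, resp.\ \cite[Theorem~A]{JahNIP}). In every such case $(K,v)$ (or the corresponding valued field) is a reduct of the Shelah expansion of a dp-minimal field, hence dp-minimal. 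This leaves the case in which $Kv$ is separably closed, hence algebraically closed by Fact~\ref{F:perfect} and Proposition~\ref{P:res-field-strong}.

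In that case I would invoke the structure theory of dp-minimal valued fields. Peeling off $\0$-definable henselian coarsenings $u$ of $v$ by means of Proposition~\ref{P:coarsening}, each $(K,u)$ is interdefinable with $K$ and hence dp-minimal, so by Johnson's analysis of dp-minimal valued fields \cite{johnson} both $uK$ and $Ku$ are dp-minimal, and an infinite residue field forces the value group of such a valued field to be divisible. Using this together with $|\mathbb{P}_\infty(vK)|\le|\mathbb{P}_\infty(K^\times)|<\infty$ and the identity $\dprk(G)=\dprk(H)+\dprk(G/H)-1$ for $H\le G$ a convex subgroup of a finite-spine ordered abelian group (a consequence of Proposition~\ref{P:dp-rankpoly}, refining Corollary~\ref{C:induction}), an induction along these coarsenings — as in the proof of Proposition~\ref{P:value-grp-strong}, but now tracking dp-minimality instead of strong dependence — shows that $vK$ is divisible. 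Since $Kv$ is also algebraically closed, $(K,v)$ is of ACVF-type, and the equality $\dprk(K,v)=1$ follows from the residue-characteristic case analysis of Theorem~\ref{T:fields} by reading off the dp-rank (not merely strong-dependence) content of its inputs: \cite[Corollary~5.25]{vd} (orthogonality of $vK$ and $Kv$) and \cite[Claim~1.17]{strongdep} in residue characteristic $0$; Proposition~\ref{P:transf-strong-dep} together with the dp-rank--preserving elimination of field quantifiers of \cite{qe-kap} in equicharacteristic $p$; and, in mixed characteristic, the coarsening decomposition of Theorem~\ref{T:fields} together with Proposition~\ref{P:strong2.5}, whose proof in fact preserves the bound $\dprk\le 1$ since it only ever adjoins a single parameter.

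The step I expect to be the main obstacle is this last one. Strong dependence is preserved by every construction used in the proof of Theorem~\ref{T:fields}, and indeed by arbitrary interpretations, whereas dp-rank $1$ is not: $\mathbb{R}((t))$ and $\overline{\mathbb{F}_p}((t))$ have dp-rank $2$ even though their value groups and residue fields are dp-minimal. So one must genuinely use the rigidity forced by dp-minimality of $K$ — namely that after the reductions above the residue field is algebraically closed with divisible value group, so that value group and residue field cannot contribute independently to the dp-rank — and must check that none of the transfer steps in Theorem~\ref{T:fields} raises the dp-rank above $1$.
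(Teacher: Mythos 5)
There is a genuine gap, and it is concentrated in exactly the step you already flagged as the likely obstacle. Your plan hinges on the claim that, after reducing to the case where $Kv$ is algebraically closed, peeling off $\0$-definable coarsenings forces $vK$ to be divisible (``an infinite residue field forces the value group of such a valued field to be divisible''). This is false. The field $K=\Cc((t))$ is dp-minimal as a pure field, $v$ (the $t$-adic valuation, which is $\0$-definable in $K$) is henselian, $Kv=\Cc$ is algebraically closed, and yet $vK\cong\Zz$ is not divisible. Dp-minimality of an ordered abelian group is the condition $[G:pG]<\infty$ for all primes $p$, not divisibility, and the induction in Proposition~\ref{P:value-grp-strong} only shrinks $\mathbb{P}_\infty(vK)$; it cannot shrink the finite indices $[vK:pvK]$. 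So the reduction to ``ACVF-type'' is unavailable, and the subsequent reading-off of dp-rank bounds from the equicharacteristic-$0$ and equicharacteristic-$p$ transfer theorems does not apply. (Your motivating counterexamples are also off: $\Rr((t))$ has dp-rank $1$, and $\overline{\mathbb{F}_p}((t))$ is not perfect, hence not even a dp-minimal pure field, so it lies outside the scope of the proposition.)

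The paper avoids this entirely by invoking Johnson's classification of dp-minimal henselian valued fields (\cite[Theorem 9.8.1]{johnson}) as a black box: one only needs to verify its hypotheses, namely that $Kv$ and $vK$ are dp-minimal, that $(K,v)$ is defectless (Corollary~\ref{C:defectless}), and the appropriate $p$-divisibility condition in positive residue characteristic ($vK$ is $p$-divisible in equicharacteristic $p$ by Lemma~\ref{L:Kaplansky}; $[0,v(p)]$ is $p$-divisible in mixed characteristic via the $\Delta_0,\Delta$ decomposition from the proof of Theorem~\ref{T:fields}). That classification already encodes the fact that $vK$ may be, say, $\Zz$ without raising the dp-rank, which is precisely what your ``$vK$ divisible'' reduction was supposed to supply. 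If you want a proof that avoids Theorem~9.8.1, you would need a transfer principle (in the style of Propositions~\ref{P:transf-strong-dep} and \ref{P:strong2.5}) that bounds dp-rank by that of the residue field and value group jointly without passing through divisibility; tracking a ``$\dprk\le 1$'' bound through Proposition~\ref{P:strong2.5} would then have to be made precise, since the proof there controls the size of a finite convex equivalence relation, not the number of mutually indiscernible sequences.
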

\begin{proof}
We use Johnson's classification of dp-minimal valued fields \cite[Theorem 9.8.1]{johnson}. If $Kv$ is not algebraically closed then, by \cite[Theorem A]{JahNIP}, $v$ is definable in $K^{sh}$ and as $K^{sh}$ is dp-minimal, so is $(K,v)$. We may thus assume that $Kv$ is algebraically closed and hence dp-minimal. 
Since $|\mathbb{P}_{\infty}(vK)|\leq |\mathbb{P}_{\infty}(K^\times)|$ and $K^\times$ is a dp-minimal abelian group, $vK$ is also dp-minimal (see \cite[Proposition 5.1]{dpmin-val-fields}). By Corollary \ref{C:defectless}, $(K,v)$ is defectless. 

If $\mathrm{char}(K)=p$ then $Kv$ is p-divisible by Lemma \ref{L:Kaplansky}.

Finally, assume that $\mathrm{char}(K)=0$ and $\mathrm{char}(Kv)=p$ and let $\Delta,\Delta_0,K_1,K_2,v_0,v_1$ and $v_2$ be as in the proof of Theorem \ref{T:fields}. Since $Kv$ is infinite, so is $K_1v_1$ and by Claim 1 of the proof of Theorem \ref{T:main}, $\Delta/\Delta_0$ is $p$-divisible. Since $\Delta_0$ is p-divisible by Lemma \ref{L:Kaplansky}, $\Delta$, and hence $[0,v(p)]$, is $p$-divisible. Now we may apply \cite[Theorem 9.8.1]{johnson}.
\end{proof}

\bibliographystyle{plain}
\bibliography{Strongness}

\end{document}